\newcommand{\diam}{\operatorname{diam}}
\newcommand{\dist}{\operatorname{dist}}
\newcommand{\R}{{\mathbb R}}
\newcommand{\modu}{\operatorname{mod}}
\newcommand{\hatc}{\hat{\mathbb{C}}}
\newtheorem{theorem}{\textbf{THEOREM}}[section]
\newtheorem{lemma}[theorem]{\textsc{Lemma}}%[section]
\newtheorem{proposition}[theorem]{\textsc{Proposition}}%[section]
\theoremstyle{definition}
\theoremstyle{remark} }
\def\charfn_#1{{\raise1.2pt\hbox{$\chi_{\kern-1pt\lower3pt\hbox{{$\scriptstyle#1$}}}$}}}
\def\leq{\leqslant }
\def\geq{\geqslant }
\def\XXint#1#2#3{{\setbox0=\hbox{$#1{#2#3}{\int}$}
\vcenter{\hbox{$#2#3$}}\kern-.5\wd0}}
\begin{document}

\title{Uniformization of planar domains by exhaustion}
%\author{Kai Rajala} 
\author{Kai Rajala} 
	\address{Department of Mathematics and Statistics, University of Jyv\"askyl\"a, P.O. Box 35 (MaD), FI-40014, University of Jyv\"askyl\"a, Finland.}
	\email{kai.i.rajala@jyu.fi}
\date{}

\thanks{Research supported by the Academy of Finland, project number 308659. 
\newline {\it 2010 Mathematics Subject Classification.} 30C20, 30C35} 
	%\newline {\it Keywords.} mappings of finite distortion, Jacobian. \newline
	
\begin{abstract} 
We study the method of finding conformal maps onto circle domains by approximating with finitely connected subdomains. 
Every domain $D \subset \hatc$ admits \emph{exhaustions}, i.e., increasing sequences of finitely connected subdomains $D_j$ whose union is $D$. By Koebe's theorem, 
each $D_j$ admits a conformal map $f_{D_j}$ from $D_j$ onto a circle domain $f_{D_j}(D_j)$. Assuming $f_{D_j} \to f$, our goal is to find out if $f(D)$ is also a circle domain.  

We present a countably connected $D$ with an exhaustion $(D_j)$ so that $(f_{D_j})$ has a limit  
whose image is \emph{not a circle domain}, and a domain $\Omega$ with an exhaustion $(\Omega_j)$ so that $(f_{\Omega_j})$ has a limit whose image has \emph{uncountably many non-point complementary components}. 

On the other hand, we prove that every exhaustion $(D_j)$ of a countably connected $D$ admits a \emph{refinement} so that the image of the corresponding limit map is a circle domain. Our result extends the He-Schramm theorem on the uniformization of countably connected domains and provides a new proof. 
\end{abstract}

\maketitle

\renewcommand{\baselinestretch}{1.2}

%%%%%%%%%%%%%%%%%%%%%%%%%%%%%%%%%%%%%%%%%%%%%%%%%%%%%%%%%%%%%%%%%%%%%%%%%%%%%%%%%%%%%%%%%%%%%%%%%%%%%%%%%%%%%%%%%
%%%%%%%%%%%%%%%%%%%%%%%%%%%%%%%%%%%%%%%%%%%%%%%%%%%%%%%%%%%%%%%%%%%%%%%%%%%%%%%%%%%%%%%%%%%%%%%%%%%%%%%%%%%%%%%%%

\section{Introduction}
\subsection{Background}

The long-standing \emph{Koebe conjecture} \cite{Koe08} predicts that every domain $D \subset \hatc$ admits a conformal map onto a \emph{circle domain}, i.e., 
a domain whose set of complementary components consists of closed disks and points. See \cite{HeSch93} for an overview. Koebe himself proved this to be the case for finitely connected domains, cf. \cite[Theorem 5.1]{Cou50}. Koebe's theorem has been extended to cover finitely connected targets with varying boundary shapes, the most general 
results being those by Brandt \cite{Bra80} and Harrington \cite{Har82}. See \cite{Sch96} for further information.  

A major breakthrough was made by He and Schramm \cite{HeSch93}, who showed that the Koebe conjecture holds for 
countably connected domains. Soon after Schramm \cite{Sch95} introduced the \emph{transboundary extremal length} (or \emph{transboundary modulus}), and applied it to give a simplified proof to the He-Schramm theorem as well as a generalization to uncountably connected ``cofat" domains. See also \cite{HeSch94}, \cite{HeSch95}, \cite{HerKos90}. Recently, results related to the Koebe conjecture have been established in \cite{Bon16}, \cite{HilMos09}, \cite{LuoWu19}, \cite{NtaYou20}, \cite{SolVid20}, and \cite{You16}. 

The proofs by He-Schramm and Schramm apply approximation of a given domain from \emph{outside} by a decreasing sequence of finitely connected domains 
together with Koebe's theorem to construct a sequence of conformal maps whose limit has circle domain image. In this paper, we study a modification of this method 
where a given domain is approximated from \emph{inside} by \emph{exhaustions}, i.e., increasing sequences of finitely connected subdomains. 

Our approach is motivated by the fact that exhaustions offer more flexibility than approximations from outside. They can potentially be applied to gain a better 
understanding of the Koebe conjecture and related problems. The challenge is in finding exhaustions with the desired properties among all the exhaustions 
of a given domain. 

Theorems \ref{mainexample1} and \ref{mainexample2} below show that an arbitrary exhaustion does not work in general; the image of the limit map is not always a circle domain. However, our main result, Theorem \ref{mainthm}, shows that any exhaustion of a countably connected domain admits a \emph{refinement} so that the image of the corresponding limit map is a circle domain. We now describe our results in detail. 

%%%%%%%%%%%%%%%%%%%%%%%%%%%%%%%%%%%%%%%%%%%%%%%%%%%%%%%%%%%%%%%%%%%%%%%%%%%%%%%%%%%%%%%%%%%%%%%%%%%%%%%%%%%%%%%%%

\subsection{Main results} An \emph{exhaustion} $\Phi$ of a domain $D \subset \hatc$ is a sequence of domains $D_j \subset D$, each bounded by finitely many disjoint 
Jordan curves in $D$, such that 
$$D_j \subset D_{j+1} \text{ for all } j=1,2,\ldots \text{ and } D=\cup_j D_j. $$  

We fix disjoint points $a_0,a_1,a_2 \in D_1$. Then by Koebe's theorem there are unique conformal maps $f_j:D_j \to \tilde{D}_j$ onto circle 
domains $\tilde{D}_j \subset \hatc$ so that $f_j(a_k)=a_k$ for $k=0,1,2$. The sequence $(f_j)$ has a subsequence converging locally uniformly to a conformal $f:D \to f(D)$. 
We denote 
$$
\mathcal{F}_\Phi=\{f:D \to f(D): \, f \text{ is the limit of a subsequence of } (f_j)\}. 
$$
If $\mathcal{F}_\Phi$ contains only one map $f$, i.e., if $(f_j)$ converges, we denote $f=f_\Phi$. The use of this notation 
always contains the implicit assumption that $f_ j\to f_\Phi$. 

%Our first result says that $f_\Phi(D)$ is not always a circle domain even when $D$ is countably connected. 

\begin{theorem} \label{mainexample1}
There is a countably connected domain $D \subset \hatc$ with exhaustion $\Phi$ such that $f_\Phi(D)$ is not a circle domain. 
\end{theorem}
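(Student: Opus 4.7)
The plan is to exhibit a specific countably connected $D$ together with an exhaustion $\Phi = (D_j)$ whose Koebe uniformizations $f_j$ converge to a limit $f_\Phi$ for which $f_\Phi(D)$ has a complementary component made of two tangent disks; since such a component is neither a single disk nor a point, $f_\Phi(D)$ is not a circle domain. The entire failure mechanism is engineered so that two disk complementary components of the images $\tilde D_j = f_j(D_j)$ become \emph{tangent} in the limit.

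For $D$ I would take $\hatc \setminus E$, where $E$ is a pairwise disjoint countable union of closed disks: a base disk $\overline{B(0,r_0)}$ together with small disks $\overline{B(p_n,r_n)}$ for $n \ge 1$, with $p_n \to p \in \partial B(0,r_0)$ and radii shrinking fast enough to keep all components disjoint. The exhaustion is $D_j = \hatc \setminus (F_j \cup \bigcup_{n=1}^{j}\overline{B(p_n,r_n)})$, where $F_j$ is a closed Jordan subdomain containing $\overline{B(0,r_0)}$ and all $\overline{B(p_n,r_n)}$ with $n > j$, chosen so that $F_{j+1} \subset F_j$ and $\bigcap_j F_j = \overline{B(0,r_0)} \cup \{p\}$. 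The freedom in the \emph{shape} of $F_j$ (precisely how its boundary reaches out toward the not-yet-separated tail of small disks near $p$) is the key design parameter. By Koebe each $f_j\colon D_j \to \tilde D_j$ exists and is normalized at $a_0, a_1, a_2$; the standard Montel-type compactness for normalized univalent maps on an exhausting sequence yields a locally uniformly convergent subsequence with conformal limit $f\colon D \to f(D)$, and by an inductive choice of the $F_j$ or by diagonal extraction one may assume that the full sequence converges, so that $f_\Phi$ is well-defined.

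Writing $C^j = f_j(F_j)$ and $B_n^j = f_j(\overline{B(p_n,r_n)})$ for $n \le j$, the images are pairwise disjoint closed spherical disks in $\tilde D_j$. After passing to a further subsequence they have Hausdorff limits $C^\infty$ and $B_n^\infty$, each a closed spherical disk or a point, and the complementary components of $f_\Phi(D)$ are the connected components of $C^\infty \cup \bigcup_n B_n^\infty$. The main obstacle — and the heart of the proof — is to show that the shape of $F_j$ can be tuned, stage by stage, so that $C^\infty$ is tangent to some $B_{n_0}^\infty$. I expect to rely on (i) continuous dependence of the Koebe uniformization on a smoothly varying family of Jordan complementary components, so that the centre and radius of $C^j$ depend continuously on the shape of $F_j$; (ii) an inductive/diagonal construction in which, at stage $j$, the shape of $F_j$ is chosen so that the gap between $C^j$ and the fixed disk $B_{n_0}^j$ decays to zero as $j \to \infty$; and (iii) modulus or transboundary-modulus estimates, in the spirit of Schramm's work cited in the introduction, to make quantitative the approach to tangency. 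The delicate point is that at each finite stage $C^j$ and $B_{n_0}^j$ are necessarily disjoint — they are distinct components of the circle domain $\tilde D_j$ — so tangency can only be produced in the limit, and the inductive construction must force this vanishing of the gap while preserving convergence of the entire sequence to the single limit $f_\Phi$.
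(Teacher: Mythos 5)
Your mechanism cannot work, for a structural reason rather than a technical one. The limit map $f_\Phi\colon D\to f_\Phi(D)$ is a homeomorphism, so it induces a bijection $q\mapsto \hat f_\Phi(q)$ between $\mathcal{C}(D)$ and $\mathcal{C}(f_\Phi(D))$; distinct complementary components of $f_\Phi(D)$ are disjoint compact connected sets and therefore lie at \emph{positive} distance from one another. Moreover, exactly as in the argument of Section \ref{prosec} (the inclusion ``$B\subset\hat g(p)$''), if $q\in\mathcal{C}(D)$ and $q_j$ is the component of $\mathcal{C}(D_j)$ containing $q$, then along any subsequence with $f_j\to f_\Phi$ every Hausdorff limit of $\hat f_j(q_j)$ is contained in $\hat f_\Phi(q)$. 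In your construction $F_j\supset\overline{B(0,r_0)}$ and $\overline{B(p_{n_0},r_{n_0})}$ is its own element of $\mathcal{C}(D_j)$ for $j\ge n_0$, so $C^\infty\subset\hat f_\Phi(\overline{B(0,r_0)})$ and $B_{n_0}^\infty\subset\hat f_\Phi(\overline{B(p_{n_0},r_{n_0})})$, two \emph{distinct} complementary components of $f_\Phi(D)$. Hence, whenever $f_j\to f_\Phi$, the gap $\dist(C^j,B^j_{n_0})$ cannot tend to zero along any subsequence (otherwise the Hausdorff limits would share a point lying in both components), so no tuning of the shapes of the $F_j$ can produce tangency without destroying convergence, and a complementary component ``made of two tangent disks'' can never arise: the two would-be tangent pieces sit inside two different components, and components cannot merge under a conformal homeomorphism. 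Your sentence asserting that the complementary components of $f_\Phi(D)$ are the connected components of $C^\infty\cup\bigcup_n B_n^\infty$ is precisely where this is hidden. (A smaller point: the circles $\partial B(p_n,r_n)$ lie on $\partial D$, so your $D_j$ are not bounded by Jordan curves \emph{in} $D$ as the definition of an exhaustion requires; that is easily repaired by slightly inflating the disks.)

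What \emph{can} degenerate in the limit is only the shape of the image of a single component, and this is how the paper proceeds: $D$ is built with a point component $\{0\}$ approached by a chain of squares, and the exhaustion is designed so that a transboundary modulus lower bound (Proposition \ref{juba1}) persists along the exhaustion and forces $\hat f_\Phi(\{0\})$ to be non-trivial (Proposition \ref{combi}). Even then one is not done, since $\hat f_\Phi(\{0\})$ could a priori be a closed disk; the paper excludes this by a rigidity statement based on Schramm's modulus estimate (Proposition \ref{terro}), which says that for this particular $D$ any conformal map onto a circle domain must send $\{0\}$ to a point. Your proposal has no analogue of either ingredient: no mechanism that distorts a single component's image, and no argument excluding that whatever limit shape you do produce is itself a disk or a point. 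To salvage the idea you would need to aim at blowing up or deforming \emph{one} component's image and then prove, by a modulus or rigidity argument of the above type, that the resulting image domain cannot be a circle domain.
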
 

We denote the set of complementary components of domain $G$ by $\mathcal{C}(G)$. We say that $p \in \mathcal{C}(G)$ is \emph{non-trivial} if $\diam(p)>0$. 
%Our next result demonstrates bad behavior for exhaustions of uncountably connected domains. 

\begin{theorem} \label{mainexample2}
There is a domain $D \subset \hatc$ with exhaustion $\Phi$ such that $\mathcal{C}(f_\Phi(D))$ contains uncountably many non-trivial elements. 
\end{theorem}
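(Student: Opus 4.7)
The plan is to iterate a ``bad'' building block, of the kind presumably used for Theorem~\ref{mainexample1}, along a binary Cantor tree so that uncountably many non-trivial complementary components appear simultaneously in the image of the limit map. First I would isolate from (or construct in parallel with) the proof of Theorem~\ref{mainexample1} an elementary \emph{building block}: a reference Jordan region $V$ containing a non-round closed sub-region $W$ such that the Koebe uniformization of a suitable finite-stage exhaustion sends $\partial W$ to a round circle enclosing a disk of diameter at least some $\delta(V)>0$. The non-circularity of $W$ is essential, since otherwise the Koebe map acts on $W$ trivially and produces no new component.

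Next I would arrange these blocks along a rooted binary tree. Index vertices by finite binary strings $s$ and assign to each $s$ a closed Jordan region $V_s \subset \hatc$, with $V_\emptyset$ a fixed large disk and $V_{s0}, V_{s1}$ two disjoint closed sub-regions in the interior of $V_s$. Three geometric requirements are imposed: the $V_s$'s are uniformly far from round at every scale, the diameters $\diam(V_s)$ shrink to zero with $|s|$ so that $\bigcap_j \bigcup_{|s|=j} V_s$ is a totally disconnected Cantor set, and the conformal moduli $\modu(V_s \setminus (V_{s0} \cup V_{s1}))$ grow to infinity with $|s|$. Setting $D_j = \hatc \setminus \bigcup_{|s|=j} V_s$ and $D = \bigcup_j D_j$ then produces a valid exhaustion, since $V_{s0} \cup V_{s1} \subset V_s$ gives $D_j \subset D_{j+1}$, each $D_j$ is finitely connected (with $2^j$ complementary components), and the union is $D$.

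The analytic heart of the proof is a localization statement for the Koebe maps $f_j : D_j \to \tilde D_j$. Using extremal length estimates across the wide separating annuli $V_{s'} \setminus (V_{s'0} \cup V_{s'1})$ for every proper ancestor $s'$ of $s$, combined with standard conformal distortion bounds, one should show that after a suitable M\"obius normalization, the restriction of $f_j$ to a neighbourhood of $V_s$ approaches the isolated Koebe map of the single building block inside $V_s$ as the separating moduli grow. Consequently, the image disk $f_j(V_s) \subset \tilde D_j$ has diameter bounded below by a positive constant depending only on the shape of $V_s$, uniformly in $j \geq |s|$.

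Finally, pass to a subsequence along which $f_j \to f_\Phi$ locally uniformly, relabelling the exhaustion so that the full sequence converges as required by the definition of $f_\Phi$. For each infinite branch $b = (b_1, b_2, \ldots) \in \{0,1\}^\omega$ of the tree, the Jordan curves $f_\Phi(\partial V_{b_1 \ldots b_j})$ are nested and enclose regions of diameter at least $\delta(V_{b_1 \ldots b_j}) > 0$; their intersection is therefore a complementary component of $f_\Phi(D)$ of positive diameter. Distinct branches are separated at some ancestor level by a Jordan curve in $f_\Phi(D)$, so the resulting components are genuinely different, and there are uncountably many branches. The hardest step, and the one I expect to present the real obstacle, is the quantitative localization used above: if the separating moduli grow too slowly, conformal influence between distant subtrees collapses the building-block disks to points in the limit and only countably many trivial point-components remain; if the $V_s$'s are too nearly round, the building-block mechanism fails in isolation. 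Choosing shapes and nestings that meet both constraints simultaneously at every level of the tree is the genuinely delicate part of the construction.
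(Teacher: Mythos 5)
Your two key quantitative claims cannot both hold, and one of them is false outright. The localization statement --- that the region bounded by $f_j(\partial V_s)$ has diameter bounded below by a constant depending \emph{only on the shape} of $V_s$, uniformly in $j \geq |s|$ --- fails for packing reasons: taking $j=|s|$ and letting $s$ range over level $j$, the $2^j$ pieces of that level have pairwise disjoint closed round-disk images under $\hat f_j$ in $\hatc$, and $2^j$ disjoint disks of spherical diameter $\geq \delta_0$ cannot fit in the sphere once $2^j$ is large compared to $\delta_0^{-2}$. So any true bound must decay with the level, and then the nested intersections along branches have no reason to be non-degenerate. Worse, your standing hypotheses actively force degeneracy: if the children lie compactly inside the parents and the moduli of the separating regions $V_s\setminus(V_{s0}\cup V_{s1})$ are bounded below (let alone tend to $\infty$), then along every branch $b$ there are pairwise disjoint annuli $A_1,A_2,\ldots\subset D$, with $A_j$ separating $V_{b_1\cdots b_{j+1}}$ from $\partial V_{b_1\cdots b_j}$ and $\sum_j \modu(A_j)=\infty$. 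These annuli and their moduli transfer to $f_\Phi(D)$ by conformal invariance, they all separate the complementary component $\hat f_\Phi(p_b)$, $\{p_b\}=\bigcap_j V_{b_1\cdots b_j}$, from the normalization points, and by the standard Gr\"otzsch superadditivity criterion that component is a point. So under your conditions every branch yields a \emph{trivial} component, the opposite of the goal; classical extremal length alone can never give a lower bound, uniform in $j$, for curve families landing on a target shrinking to a Cantor point.

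The missing idea is the paper's transboundary mechanism, which is tied to the choice of the \emph{exhaustion}, not to the shapes of the complementary pieces. In the paper, $\mathcal{C}(D)$ is a Cantor set $K$ together with countably many segments $I_m(w,v)$ arranged in chains accumulating on $K$, and the exhaustion components at stage $j$ are the level-$j$ squares plus small Jordan neighborhoods $\overline U_m(j,w,v)$ of the segments. Admissible curves may traverse these neighborhoods essentially for free (paying only the vertex weight), and a large parallel family of chain-curves --- indexed by the $4^{j-1}$ words $v$ and a continuum of radii of arcs near the segment endpoints --- yields the uniform bound $\modu(I_1,Q_{\bar w(j_0)};D_j)\geq\delta$ of Proposition \ref{kuja1}; Proposition \ref{combi} then converts precisely such a uniform transboundary bound into non-triviality of $\hat f_\Phi(p)$ for every $p\in K$, and $K$ is uncountable. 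Note also that the building block of Theorem \ref{mainexample1} is not ``a non-round component forced to have a circular image of definite size'': it is a \emph{point} component $\{0\}$ whose image becomes non-trivial, again because the exhaustion contains connector components $\overline U_{k,j,\ell}$ joining squares across gaps. Without an analogue of these connectors and of the transboundary modulus estimate, your tree construction cannot be repaired merely by tuning shapes and nesting rates.
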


Theorems \ref{mainexample1} and \ref{mainexample2} are in sharp contrast to \cite[Theorem 2.1]{Cou50} on \emph{slit domains}, i.e., domains whose sets of complementary components consist of vertical segments and points; if $\Phi$ is an exhaustion of $D$ and if the targets $\tilde{D}_{j}$ above are slit domains so that $f_j \to f$, then $f(D)$ is always a slit domain.  

In view of Theorems \ref{mainexample1} and \ref{mainexample2}, in order to produce a limit map onto a circle domain it is necessary to modify, or refine, a given exhaustion. Let $\Phi=(D_j)$ and $\Phi'=(D'_j)$ be exhaustions of $D$. We say that $\Phi$ is a \emph{refinement} of $\Phi'$, if every $p \in \mathcal{C}(D_j)$ 
is an element of $\mathcal{C}(D'_{j(p)})$ for some $j(p) \geq j$. Our main result reads as follows. 

\begin{theorem} 
\label{mainthm}
Every exhaustion of a countably connected domain $D \subset \hatc$ has a refinement $\Phi$ such that $f_\Phi(D)$ is a circle domain. 
\end{theorem}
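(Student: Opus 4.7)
The plan is to enumerate $\mathcal{C}(D) = \{K_0, K_1, \ldots\}$ using countable connectivity and to construct the refinement $\Phi = (D_j)$ inductively so that $D_j$ separates $K_0, \ldots, K_j$ into distinct complementary components $Q_0^j, \ldots, Q_j^j$. Since round disks form a Hausdorff-closed family, the Koebe images $\tilde Q_m^j$ of the essential components will converge to round disks or points, producing the round complementary components of $f_\Phi(D)$. The main task is to ensure that the remaining ``auxiliary'' complementary components of $D_j$, each containing only $K_m$ with $m > j$, do not contribute spurious non-round complementary components to the limit.

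At step $j$, having chosen indices $k_1 < \cdots < k_{j-1}$, I would select $k_j > k_{j-1}$ satisfying: (i) the complementary components of $D'_{k_j}$ separate $K_0, \ldots, K_j$; and (ii) each auxiliary complementary component $A$ of $D_j$ has Koebe image $f_j(A)$ of diameter less than $1/j$. The domain $D_j$ is defined by taking $D'_{k_j}$ and filling in every complementary component disjoint from $\partial D$, so that each complementary component of $D_j$ coincides with a complementary component of $D'_{k_j}$, verifying the refinement condition with $j(p) = k_j \geq j$. The existence of $k_j$ satisfying (i) is a routine separation lemma for exhaustions; (ii) is the main technical step, to be proved via Schramm's transboundary modulus, using countable connectivity of $D$ to obtain an upper bound on the total transboundary mass of the auxiliary components that vanishes as $k_j \to \infty$. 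The inclusion $D_j \subset D_{j+1}$ and the identity $\bigcup_j D_j = D$ are routine.

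By Koebe's theorem and normal families, $(f_j)$ has a locally uniformly convergent subsequence; passing to this subsequence (itself a refinement of $\Phi'$) we may assume $f_j \to f = f_\Phi$ for a univalent $f$ on $D$. For each $m \geq 0$ and $j \geq m$, the map $f_j$ sends $Q_m^j$ to a round disk, so the Hausdorff limit $f(K_m) := \lim_j \tilde Q_m^j$ is a round disk or a single point. The prime-end correspondence for the conformal $f : D \to f(D)$ identifies $\{f(K_m)\}$ with $\mathcal{C}(f(D))$; any Hausdorff subsequential limit of auxiliary round disks is a continuum contained in $\hatc \setminus f(D) = \bigcup_m f(K_m)$, hence in a single $f(K_m)$ by disjointness, and condition (ii) forces this limit to be a single point. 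Therefore every element of $\mathcal{C}(f(D))$ is a round disk or a point, so $f_\Phi(D)$ is a circle domain. The decisive step is condition (ii); this is where countable connectivity is essential, in accordance with the failure illustrated in Theorem~\ref{mainexample2}.
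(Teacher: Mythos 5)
There is a genuine gap, and it is structural rather than technical. The refinement you build is of a very special type: every complementary component of your $D_j$ comes from the \emph{single} stage $D'_{k_j}$ (you only pass to a subsequence and fill in components compactly contained in $D$). The paper's notion of refinement is strictly more flexible -- different components of $D_j$ may be taken from different, component-dependent later stages $D'_{j(p)}$ -- and this flexibility is not a convenience but a necessity. Theorem \ref{mainexample1} exhibits a \emph{countably connected} domain $D$ with an exhaustion $\Phi_0=(D_j)$ in which every complementary component of every $D_j$ meets $\partial D$ (so your filling step is vacuous), and Propositions \ref{juba1}, \ref{combi} and \ref{terro} show that for \emph{every} convergent subsequence the limit image fails to be a circle domain. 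Your scheme applied to that input can only produce such subsequences, so it cannot prove Theorem \ref{mainthm}; in particular your key condition (ii), which you leave as an unproved ``main technical step'', cannot be both achievable and sufficient there. Note also that your closing remark points to Theorem \ref{mainexample2} as the relevant obstruction; Theorem \ref{mainexample1} shows the problem already occurs for countably connected domains, so countable connectivity alone cannot rescue a subsequence-type refinement.

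Even granting (ii), the final limit argument has a second gap at precisely the point where the paper invests all of its machinery. From $f_j\to f$ one only gets the easy inclusion: the Hausdorff limit $B_m$ of the round disks $\tilde Q_m^j$ satisfies $B_m\subset \hat f(K_m)$. The crux is the reverse inclusion, i.e.\ that $\hat f(K_m)$ has no extra ``hairs'' beyond $B_m$; the extra part of $\hat f(K_m)$ need not be a limit of images of \emph{any} exhaustion components (auxiliary or not), so the observation that auxiliary limits are points does not close this. In the paper this is Section 2.4--2.5: the transboundary modulus estimate \eqref{anna} together with Lemma \ref{sis} rules out the hairs, but proving \eqref{anna} requires (a) a refinement constructed as in Lemma \ref{masterlemma1}, where the point components of $D$ inside a small neighborhood $U$ of $p$ are shielded by components of total diameter at most $2^{-j-1}$ and the disk components are shielded tightly -- impossible if all shields must come from one stage $k_j$ -- and (b) the standing hypothesis \eqref{upyu} that all \emph{other} components inside $U$ already have disk images under every limit map. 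Hypothesis (b) is why the components cannot be processed in an arbitrary enumeration order: they accumulate on one another, and the paper handles this with transfinite induction over the derived sets $E_\alpha$ of $\mathcal{C}(D)$ (Proposition \ref{transprop}), treating at each stage only the components that are isolated relative to the current derived set. Your plan would need both of these ingredients -- component-dependent refinement stages and an induction organized by the Cantor--Bendixson hierarchy rather than by a plain enumeration -- before the modulus argument you allude to could be set up.
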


Since every domain admits an exhaustion, Theorem \ref{mainthm} gives a new proof to the He-Schramm theorem. Our main tools are transfinite 
induction, which was also used by He-Schramm and Schramm, and Schramm's transboundary modulus.

%%%%%%%%%%%%%%%%%%%%%%%%%%%%%%%%%%%%%%%%%%%%%%%%%%%%%%%%%%%%%%%%%%%%%%%%%%%%%%%%%%%%%%%%%%%%%%%%%%%%%%%%%%%%%%%%%

\section{Proof of Theorem \ref{mainthm}}
Let $G \subset \hatc$ be a domain and $\hat{G}=\hatc / \sim$, where 
$$
x \sim y \text{ if either } x=y \in G \text{ or } x,y \in p \text{ for some } p \in \mathcal{C}(G). 
$$
The corresponding quotient map is $\pi_G:\hatc \to \hat{G}$. Identifying each $x \in G$ and $p \in \mathcal{C}(G)$ with 
$\pi_G(x)$ and $\pi_G(p)$, respectively, we have 
$$
\hat{G}=G \cup \mathcal{C}(G). 
$$ 
A homeomorphism $f:G \to G'$ has a homeomorphic extension $\hat{f}:\hat{G} \to \hat{G'}$. 

Let $\Phi'=(D'_j)$ be an exhaustion of a countably connected domain $D$. We consider the following property: If $q_1 \in \mathcal{C}(D'_{j_1})$ and 
$q_2 \in \mathcal{C}(D'_{j_2})$, $j_1 \geq j_2$, and if $q_1 \cap q_2 \neq \emptyset$, then 
\begin{equation} 
\label{sisalla} 
\text{either } q_1=q_2 \text{ or } q_1 \text{ lies in the interior of } q_2. 
\end{equation} 
It is not difficult to see that any exhaustion $\Phi''$ of $D$ has a refinement $\Phi'$ satisfying \eqref{sisalla}. Since any refinement of $\Phi'$ is also a refinement of $\Phi''$, we conclude that it suffices to prove Theorem \ref{mainthm} for exhaustions satisfying \eqref{sisalla}. 

We prove Theorem \ref{mainthm} using transfinite induction (cf. \cite{ChaKei73}) and the following result. In this paper, \emph{we allow closed disks to have zero diameter}. For instance, in the following proposition a disk $q \in \mathcal{C}(D)$ may be a point component. 

\begin{proposition}
\label{mainprop}
Let $D \subset \hatc$ be a countably connected domain. Fix an exhaustion $\Phi'=(D_j')$ of $D$ satisfying \eqref{sisalla}, $p\in \mathcal{C}(D)$, and an open neighborhood $U$ of $p$ in $\hatc$ such that 
$\overline{U} \in \mathcal{C}(D_n')$ for some index $n$. Moreover, suppose every $f \in \mathcal{F}_{\Phi'}$ satisfies 
\begin{equation} 
\label{upyu}
\hat{f}(q) \text{ is a disk for all } q \in \mathcal{C}(D) \setminus \{p\}, \, q \subset U. 
\end{equation} 
Then $\Phi'$ has a refinement $\Phi_p=(D_j(p))$ such that 
\begin{eqnarray}
\label{aqua1} D_j(p) \setminus U=D_j'\setminus U \quad \text{for all } j \in \mathbb{N} \quad \text{and } \\
\label{aqua2} \text{if } \Phi \text{ is any refinement of } \Phi_p, \text{ then } \hat{g}(p) \text{ is a disk for all } g \in \mathcal{F}_\Phi. 
\end{eqnarray}
\end{proposition}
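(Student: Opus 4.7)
\noindent My plan is to modify $\Phi'$ only inside $U$, replacing the large complementary component $\overline{U}$ by the finer decomposition that $\Phi'$ eventually produces deep inside $\overline{U}$. I would pick an increasing sequence $k_j\to\infty$ with $k_j\geq\max\{j,n\}$ so that
\[
\Sigma_U^{k_j}:=\{q\in\mathcal{C}(D_{k_j}'):\,q\subsetneq\overline{U}\}
\]
is non-empty, and take $D_j(p)$ to be the finitely connected subdomain of $D$ whose complementary components are $\{q\in\mathcal{C}(D_j'):q\cap\overline{U}=\emptyset\}\cup\Sigma_U^{k_j}$. Property \eqref{sisalla} together with the fact that $\Phi'$ exhausts $D$ yields $D_j(p)\subset D_{j+1}(p)$ and $\bigcup_jD_j(p)=D$, so $\Phi_p=(D_j(p))$ is an exhaustion refining $\Phi'$ and satisfying \eqref{aqua1}. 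The exact rate of growth of $(k_j)$ will be pinned down at the end.

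\smallskip
\noindent To establish \eqref{aqua2}, I fix an arbitrary refinement $\Phi=(D_j^*)$ of $\Phi_p$, a subsequential limit $g\in\mathcal{F}_\Phi$ of the normalized Koebe maps $f_j^*\colon D_j^*\to\tilde{D}_j^*$, and the complementary component $p_j^*\in\mathcal{C}(D_j^*)$ containing $p$. Because $\Phi$ refines $\Phi'$ and $\Phi'$ satisfies \eqref{sisalla}, the sets $p_j^*$ shrink in Hausdorff distance to $p$. The images $B_j^*:=\hat{f}_j^*(p_j^*)$ are closed round disks; after passing to a further subsequence, they converge in Hausdorff distance to a closed (possibly degenerate) round disk $\overline{B}\subset\hatc$, since round disks are closed under Hausdorff limits. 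The whole goal is then to prove $\overline{B}=\hat{g}(p)$, from which \eqref{aqua2} is immediate.

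\smallskip
\noindent I would establish the two inclusions $\overline{B}\subset\hat{g}(p)$ and $\hat{g}(p)\subset\overline{B}$ separately. For $\overline{B}\subset\hat{g}(p)$, I approximate $p$ from outside by Jordan curves $\alpha\subset D\cap U$: for $j$ large, $p_j^*$ lies on the $p$-side of $\alpha$, $f_j^*(\alpha)\to g(\alpha)$ uniformly, and $B_j^*$ is enclosed by $f_j^*(\alpha)$; shrinking $\alpha$ towards $p$ traps $\overline{B}$ inside the intersection of the regions enclosed by $g(\alpha)$, and hypothesis \eqref{upyu} --- ensuring every other $q\in\mathcal{C}(D)$ with $q\subset U$ maps under any $f\in\mathcal{F}_{\Phi'}$ to a genuine disk --- lets one verify that this intersection is $\hat{g}(p)$. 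The inclusion $\hat{g}(p)\subset\overline{B}$ is the delicate step, and the one that can fail for arbitrary exhaustions, as Theorems \ref{mainexample1} and \ref{mainexample2} show. For this I would invoke Schramm's transboundary modulus: applied to annular curve families in $D\cap U$ separating $p$ from progressively larger subcollections of $\Sigma_U^{k_j}$, and combined with its conformal invariance, the transferred modulus estimates on the $g(D)$-side force every boundary access of $\hat{g}(p)$ onto $\partial\overline{B}$. Choosing $(k_j)$ rapidly enough, by a diagonal argument over the countable family $\{q\in\mathcal{C}(D):q\subset U\}$, then ensures these estimates hold uniformly across every refinement $\Phi$ of $\Phi_p$.

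\smallskip
\noindent \textbf{Main obstacle.} The decisive difficulty is the inclusion $\hat{g}(p)\subset\overline{B}$, ruling out that $\hat{g}(p)$ extends beyond the Hausdorff limit of the inscribed disks $B_j^*$; both hypothesis \eqref{upyu} and the careful construction of $\Phi_p$ are essential, as the counterexamples in Theorems \ref{mainexample1} and \ref{mainexample2} show. The technical heart of the proof is turning Schramm's transboundary-modulus estimate into one uniform over every refinement of $\Phi_p$, fixed by an appropriate diagonal choice of $(k_j)$.
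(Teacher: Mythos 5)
Your overall skeleton matches the paper's: refine only inside $U$, pass to the Hausdorff limit disk $\overline{B}$ of the images $\hat f_j^*(p_j^*)$, note $\overline B\subset\hat g(p)$ from locally uniform convergence, and fight for the reverse inclusion with transboundary modulus and its conformal invariance. But the proposal has two genuine gaps, and they are precisely where the work lies. First, the refinement itself: taking a single index $k_j$ and using all of $\Sigma_U^{k_j}=\{q\in\mathcal C(D'_{k_j}):q\subsetneq\overline U\}$ does not give the quantitative control that any modulus argument of this type needs. The paper (Lemma \ref{masterlemma1}) chooses, for \emph{each} complementary component $q\subset U$ separately, a covering component from a sufficiently deep level: the cover of the $m$-th point component is forced to have diameter at most $2^{-j-m-1}$ (so the total diameter of the ``point-like'' covers is summably small), and the cover of each disk $\overline B(x,t)$ is forced into a collar of width $\min\{t/100,\dist(\cdot,p)/100\}$. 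With one uniform level $k_j$ these properties can fail for every choice of $k_j$ (e.g.\ point components accumulating on a disk component can be covered, at every level of the given exhaustion, by point-only components of total diameter bounded below), and no ``diagonal choice of $(k_j)$'' can help with arbitrary refinements $\Phi$ of $\Phi_p$: a refinement may use components from arbitrarily deep levels, so what is needed is that \emph{all} admissible deeper-level components are already well controlled --- exactly what the per-component selection achieves and what makes the estimate stable under refinement (the classes $\mathcal C_{d,j},\mathcal C_{b,j},\mathcal C_{s,j}$ in Lemma \ref{masterlemma2} are unions over all levels $m\geq j$).

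Second, the key inclusion $\hat g(p)\subset\overline B$ is only asserted, not proved: ``annular curve families separating $p$ from progressively larger subcollections of $\Sigma_U^{k_j}$ \dots\ force every boundary access of $\hat g(p)$ onto $\partial\overline B$'' names the goal rather than an argument. The paper's proof consists of two concrete estimates you do not supply: (a) the decay estimate \eqref{anna}, $\lim_{r\to0}\limsup_j\modu(S(z,r)\setminus p_j,\partial U;D_j)=0$ for $z\in p$, proved by explicit admissible functions on a chain of annuli and relying on the geometric properties of $\Phi_p$ listed above; and (b) a uniform positive lower bound (Lemma \ref{sis}) for the modulus of the image families $\hat f_j\Gamma(j,k)$ built from the gap of size $\delta$ between $f_j(z_k)$ and $\hat f_j(p_j)$, which uses crucially that the image domains are circle domains (disjoint round disks give $\sum_\tau\diam(\tau)^2\leq\tfrac4\pi|V_j|$) and that, after replacing $D$ by $f(D)$ for some $f\in\mathcal F_{\Phi'}$ via hypothesis \eqref{upyu}, the other complementary components inside $U$ are round disks as well. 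Your sketch never indicates where \eqref{upyu} or the roundness of the image components enters the modulus estimates, yet without them the statement is false (Theorems \ref{mainexample1} and \ref{mainexample2}), so this cannot be treated as a routine step. In short: the strategy is right, but both the construction of $\Phi_p$ with the properties the modulus argument requires and the two modulus estimates themselves (the content of Proposition \ref{mainprop2}, Lemmas \ref{masterlemma1}, \ref{masterlemma2}, and \ref{sis}) are missing.
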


%%%%%%%%%%%%%%%%%%%%%%%%%%%%%%%%%%%%%%%%%%%%%%%%%%%%%%%%%%%%%%%%%%%%%%%%%%%%%%%%%%%%%%%%%%%%%%%%%%%%%%%%%%%%%%%%%

\subsection{Transfinite induction}
Suppose $D \subsetneq \hatc$ is a countably connected domain. We lose no generality by assuming that the number of complementary components of $D$ is infinite. We denote $E_0=\hat{D} \setminus D$.  For any compact non-empty $E\subset E_0$, let  
$$
E^*=\{p \in E: \, p \, \text{ is not isolated in } E \}. 
$$
By the Baire category theorem, $E^* \subsetneq E$. We can now use transfinite induction to define a well ordered set of subsets $E_\alpha$ of $E_0$ as follows: 
Given an ordinal $\alpha >0$, we define 
\begin{eqnarray*}
E_\alpha=
\left\{
\begin{array}{ll}
(E_\beta)^*, & \text{if }\alpha=\beta+1 \text{ is a successor ordinal, } \\ 
\cap_{\beta < \alpha} E_\beta, & \text{if }\alpha \text{ is a limit ordinal. }  
\end{array}
\right. 
\end{eqnarray*}

It follows that each $E_\alpha$ is compact and $E_\alpha \subsetneq E_\beta$ if $\alpha > \beta$ and $E_\beta \neq \emptyset$. There is an $\alpha_L$ so that $E_{\alpha_{L}}$ is finite and non-empty, thus $E_{\alpha_{L}+1}=\emptyset$. 

We now show how Theorem \ref{mainthm} follows from Proposition \ref{mainprop}. 

\begin{proposition}
\label{transprop} 
Let $\Phi(0)=(D_j(0))$ be an exhaustion of $D$ satisfying \eqref{sisalla}. For every ordinal $0\leq \alpha \leq \alpha_L+1$ there is an exhaustion $\Phi(\alpha)=(D_j(\alpha))$ of $D$ 
so that  
\begin{itemize} 
\item[(i)] if $0 \leq \beta \leq \alpha$, then $\Phi(\alpha)$ is a refinement of $\Phi(\beta)$, and 
\item[(ii)] if $\Phi$ is any refinement of $\Phi(\alpha)$, then 
\begin{equation} 
\label{tura}
\hat{f}(q) \text{ is a disk for all }  f \in \mathcal{F}_\Phi  \text{ and } q \in E_0 \setminus E_{\alpha}. 
\end{equation} 
\end{itemize}
\end{proposition}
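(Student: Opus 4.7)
The plan is to induct transfinitely on $\alpha \leq \alpha_L + 1$. Since $D$ is countably connected, $E_0$ is countable and hence $\alpha_L < \omega_1$, so every ordinal considered is countable. The base case $\alpha = 0$ is immediate: take $\Phi(0)$ as given; condition (i) is trivial and (ii) is vacuous because $E_0 \setminus E_0 = \emptyset$.

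For the successor step $\alpha = \beta + 1$, the components to be handled are the (at most countably many) isolated points of $E_\beta$, which we list as $p_1, p_2, \ldots$. For each $p_i$, we use \eqref{sisalla} to select an open neighborhood $U_i$ with $\overline{U_i} \in \mathcal{C}(D_{n_i}(\beta))$, $U_i \cap E_\beta = \{p_i\}$, and $n_i \geq i$; the $U_i$ will then be pairwise disjoint. Applying the inductive hypothesis (ii) with $\Phi := \Phi(\beta)$ (a trivial self-refinement) and using $U_i \cap E_\beta = \{p_i\}$ verifies the hypothesis \eqref{upyu} of Proposition \ref{mainprop}, yielding refinements $\Phi_{p_i}$ of $\Phi(\beta)$ that coincide with $\Phi(\beta)$ outside $U_i$ by \eqref{aqua1}. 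We combine them by the pointwise glueing
$$
D_j(\beta+1) := \Bigl( D_j(\beta) \setminus \bigcup_{i:\, n_i \leq j} U_i \Bigr) \cup \bigcup_{i:\, n_i \leq j} \bigl( D_j(p_i) \cap U_i \bigr).
$$
Because $n_i \geq i$, only finitely many terms contribute at each level $j$, so $D_j(\beta+1)$ remains bounded by finitely many Jordan curves; monotonicity and $\bigcup_j D_j(\beta+1) = D$ are inherited from $\Phi(\beta)$ and the $\Phi_{p_i}$. A case analysis using \eqref{aqua1} and the disjointness of the $U_i$ shows that $\Phi(\beta+1)$ refines both $\Phi(\beta)$ and every $\Phi_{p_i}$: a complementary component of $D_j(\beta+1)$ disjoint from $\bigcup_i U_i$ is a complementary component of $D_j(\beta)$, while one contained in some $U_k$ is a complementary component of $D_j(p_k)$, hence of $\Phi(\beta)$ at a later stage. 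Consequently, any refinement $\Phi$ of $\Phi(\beta+1)$ transitively refines each $\Phi_{p_i}$ and $\Phi(\beta)$, so the inductive (ii) handles $q \in E_0 \setminus E_\beta$ and \eqref{aqua2} handles $q = p_i \in E_\beta \setminus E_{\beta+1}$.

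For a limit ordinal $\alpha$, fix a strictly increasing cofinal sequence $\beta_j \nearrow \alpha$ and build $\Phi(\alpha)$ by diagonal selection, setting $D_j(\alpha) := D_{m_j}(\beta_j)$ with $m_j \geq j$ chosen large enough that $\overline{D_j(\alpha)} \subset D_{j+1}(\alpha)$ and $\bigcup_j D_j(\alpha) = D$ (possible since each $\overline{D_{m_j}(\beta_j)} \Subset D$ and $\Phi(\beta_{j+1})$ exhausts $D$). Condition (i) follows by transitivity of refinement: for any $\beta < \alpha$ pick $\beta_k \geq \beta$, and for $j \geq k$ the inductive (i) gives that $\Phi(\beta_j)$ refines $\Phi(\beta_k)$, so components of $D_j(\alpha) = D_{m_j}(\beta_j)$ lie in $\mathcal{C}(D_l(\beta_k))$ for some $l \geq m_j \geq j$. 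Condition (ii) then follows from $E_0 \setminus E_\alpha = \bigcup_j (E_0 \setminus E_{\beta_j})$.

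The main technical obstacle is the bookkeeping in the successor step: ensuring that the combined exhaustion has finitely many Jordan boundary curves at each level is exactly why we need $n_i \to \infty$, an option made available by the nested complementary-component structure from \eqref{sisalla}; verifying that $\Phi(\beta+1)$ simultaneously refines $\Phi(\beta)$ and every $\Phi_{p_i}$ then requires a careful case analysis of where each boundary curve lives relative to the $U_i$. The limit step is technically cleaner once compactness of $\overline{D_{m_j}(\beta_j)}$ in $D$ is exploited to secure monotonicity of the diagonal.
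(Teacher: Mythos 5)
Your successor step is essentially the paper's argument (apply Proposition \ref{mainprop} at each isolated point of $E_\beta$ inside pairwise disjoint neighborhoods and glue, using \eqref{aqua1} and \eqref{aqua2}), and that part is fine. The genuine gap is the limit step. The diagonal $D_j(\alpha):=D_{m_j}(\beta_j)$ can indeed be made an exhaustion, but it is not a refinement of $\Phi(\beta)$ for every $\beta<\alpha$, and your verification of (i) only treats indices $j\geq k$ with $\beta_j\geq\beta$. For a fixed small index $j$ with $\beta_j<\beta$, the complementary components of $D_{m_j}(\beta_j)$ are stage-$\beta_j$ components; inside the neighborhood of any point $p\in E_{\beta_j}\setminus E_{\beta}$ processed between $\beta_j$ and $\beta$, Proposition \ref{mainprop} (via Lemma \ref{masterlemma1}) has \emph{replaced} those components by new ones, and all later stages keep or further refine the new ones, never reverting to the old. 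Hence an old component lying in that neighborhood is not an element of $\mathcal{C}(D_l(\beta))$ for any $l$, so refinement in the paper's sense — required for \emph{every} index $j$ — fails; and no re-indexing repairs this simultaneously for all $\beta<\alpha$, since the bad initial segment of indices grows as $\beta\to\alpha$. The failure is not cosmetic: your own deduction of (ii) at the limit stage from $E_0\setminus E_\alpha=\bigcup_j(E_0\setminus E_{\beta_j})$ needs every refinement of $\Phi(\alpha)$ to be a refinement of each $\Phi(\beta_j)$, which is exactly the missing (i), and (i) is likewise needed to run all subsequent successor steps.

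The paper avoids this by making a choice you discard: the neighborhoods $U_p$ are fixed once and for all, for \emph{all} $p\in\mathcal{C}(D)$, as complementary components of the initial exhaustion $\Phi(0)$, with the coherence conditions \eqref{rs} and \eqref{ss}. At a limit ordinal it then performs a \emph{spatial} gluing rather than a temporal diagonal: $D_j(\alpha)$ equals $D_j(0)$ outside $\bigcup_{p\in E_0\setminus E_\alpha}U_p$, and inside each $U_p$ it is frozen equal to $D_j(\beta(q(p)))$ for a suitably maximal earlier stage (\eqref{pearl}, \eqref{myssy}); the nesting property \eqref{freku}, together with \eqref{amyri} from the successor steps, is what makes this glued sequence a refinement of every previous $\Phi(\beta)$ and yields \eqref{tura}. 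Because you choose the $U_i$ anew at each successor stage (relative to $\Phi(\beta)$), you lose the stage-independent structure that such gluing requires; to salvage your approach you would either have to adopt global neighborhoods as in \eqref{rs}--\eqref{ss} and redo the limit step spatially, or reformulate the whole machinery with a weaker ``eventual refinement'' notion and check that Propositions \ref{mainprop} and \ref{mainprop2} still apply — neither of which is done in the proposal.
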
 

Suppose $\Phi(\alpha_L+1)=(D_j)$ satisfies \eqref{tura}, and let $(f_{j_k})$ be a subsequence of the corresponding $(f_j)$ converging to some $f$. Choosing 
$\Phi=(D_{j_k})$ and $f=f_{\Phi}$ shows that Theorem \ref{mainthm} follows from Proposition \ref{transprop}. 

\begin{proof}[Proof of Proposition \ref{transprop} assuming Proposition \ref{mainprop}]
First, we enumerate the elements $p=p(k)\in \mathcal{C}(D)$, and denote $p(k)\prec p(\ell)$ if $k<\ell$. This should not be confused with the ordering of the sets $E_\alpha$. 
Each $p$ belongs to $E_\alpha \setminus E_{\alpha+1}$ for exactly one $0 \leq \alpha \leq \alpha_L$. Fix such an $\alpha$. Then each 
$p \in E_\alpha \setminus E_{\alpha+1}$ admits an open neighborhood $U_p \subset \hatc$ so that 
$\overline{U}_p \in \mathcal{C}(D_j(0))$ for some $j$, 
\begin{eqnarray}
\label{rs}
& &\pi_D(\overline{U}_p) \cap E_{\alpha+1} = \emptyset, \quad \text{and } \\ 
\label{ss} 
& &\overline{U}_p \cap \overline{U}_q =\emptyset \quad \text{if } q \in E_\alpha \setminus(E_{\alpha+1} \cup \{p\}) \text{ or if } q \in E_0 \setminus E_\alpha \text{ satisfies } q \prec p. 
\end{eqnarray} 

We apply transfinite induction. The claims of the proposition clearly hold for $\alpha=0$ with the given exhaustion $\Phi(0)$. We assume that the claims hold for all $\beta < \alpha$ and verify them for $\alpha$. 

Let $\alpha=\beta+1$ be a successor ordinal. By the induction assumption, \eqref{rs} and \eqref{ss}, Condition \eqref{upyu} in Proposition \ref{mainprop} 
is satisfied with $\Phi'=\Phi(\beta)$, $p \in E_0 \setminus E_\beta$, and $U=U_p$. The proposition combined with our choice of $U_p$ then gives a refinement 
$\Phi(\alpha)=(D_j(\alpha))$ of 
$\Phi(\beta)=(D_j(\beta))$ so that 
\begin{equation} 
\label{amyri}
D_j(\alpha) \setminus \bigcup_{p \in E_\beta \setminus E_\alpha} U_p = D_j(\beta) \setminus \bigcup_{p \in E_\beta \setminus E_\alpha} U_p
\end{equation} 
and so that \eqref{tura} holds for all $p \in E_\beta \setminus E_\alpha$. Notice again that if $\Phi'$ is a refinement of $\Phi$ and if $\Phi''$ is a refinement of $\Phi'$, then $\Phi''$ is a refinement of $\Phi$. 
The claims follow. 

Now let $\alpha=\cap_{\beta<\alpha} \beta$ be a limit ordinal. We define $\Phi(\alpha)=(D_j(\alpha))$ as follows: first, let  
\begin{equation}
\label{pearl} 
D_j(\alpha) \setminus \Big(\bigcup_{p \in E_0 \setminus E_\alpha}  U_p \Big) =D_j(0) \setminus \Big(\bigcup_{p \in E_0 \setminus E_\alpha}  U_p \Big). 
\end{equation} 
Fix $p \in E_0 \setminus E_\alpha$. Each $q \in E_0$ belongs to some $E_{\beta(q)} \setminus E_{\beta(q)+1}$. With this notation, we have $\beta(p) < \alpha$. 

By \eqref{ss} there are only finitely many $q \in E_{\beta(p)} \setminus E_\alpha$ such that 
\begin{equation}
\label{jerki}
\overline{U}_p \cap \overline{U}_q \neq \emptyset. 
\end{equation} 
Moreover, since each such $\overline{U}_q$ belongs to $\mathcal{C}(D_j(0))$, \eqref{rs} and \eqref{ss} show that  
\begin{equation} 
\label{freku} 
U_p \subset U_q \subset U_{q'} 
\end{equation}
if both $\overline{U}_q$ and $\overline{U}_{q'}$ satisfy \eqref{jerki} and $\beta(q) \leq \beta(q')$. 

Among the elements $q$ for which \eqref{jerki} holds, let $q(p)$ be the one with the maximal $\beta(q)$. Then $\beta(p) \leq \beta(q(p))< \alpha$. We set 
\begin{equation} 
\label{myssy}
D_j(\alpha) \cap U_p= D_j(\beta(q(p))) \cap U_p, \quad  p \in E_0 \setminus E_\alpha. 
\end{equation}
Then \eqref{pearl} and \eqref{myssy} define $\Phi(\alpha)=(D_j(\alpha))$. Furthermore, \eqref{amyri}, \eqref{freku}, and the induction assumption show that $\Phi(\alpha)$ is a refinement of every $\Phi(\beta)$, $\beta \leq \alpha$, and that \eqref{tura} holds. The proof is complete, modulo Proposition \ref{mainprop}. 
\end{proof} 

%%%%%%%%%%%%%%%%%%%%%%%%%%%%%%%%%%%%%%%%%%%%%%%%%%%%%%%%%%%%%%%%%%%%%%%%%%%%%%%%%%%%%%%%%%%%%%%%%%%%%%%%%%%%%%%%%

\subsection{Transboundary modulus}
We will apply the following generalization of conformal modulus, first introduced by Schramm \cite{Sch95}. In addition to its importance in classical 
uniformization problems, this method has played a central role in recent developments on the uniformization of fractal metric spaces, cf. 
\cite{Bon11}, \cite{BonMer13}, \cite{BonMer20}, \cite{HakLi19}, \cite{Nta20}.

Let $G \subset \hatc$ be a domain. The \emph{transboundary modulus} $\modu(\Gamma)$ of a family $\Gamma$ of paths in $\hat{G}$ is 
$$
\modu(\Gamma)=\inf_{\rho \in X(\Gamma)} \int_{G} \rho^2 \, dA + \sum_{p \in \mathcal{C}(G)} \rho(p)^2, 
$$
where $X(\Gamma)$ consists of all Borel functions $\rho:\hat{G}\to [0,\infty]$ for which 
$$
1 \leq   \int_{\gamma} \rho \, ds +\sum_{p \in \mathcal{C}(G) \cap |\gamma|} \rho(p)  \quad \text{for all } \gamma \in \Gamma.  
$$
Here $\int_{\gamma} \rho \, ds$ is the path integral of the restriction of $\gamma$ to $G$. More precisely, this restriction is 
a countable union of disjoint paths $\gamma_j$, each of which maps onto a component of $|\gamma| \setminus \mathcal{C}(G)$, and we define 
$$
\int_{\gamma} \rho \, ds = \sum_j \int_{\gamma_j} \rho \, ds.  
$$

As noticed in \cite{Sch95}, the transboundary modulus is a conformal invariant. 

\begin{lemma}
\label{modinvariance} 
Suppose $f:G \to G'$ is conformal. Then for every path family $\Gamma$ and $\hat{f}(\Gamma)=\{\hat{f} \circ \gamma: \, \gamma \in \Gamma\}$ we have 
$$
\modu(\hat{f}(\Gamma)) =\modu(\Gamma). 
$$
\end{lemma}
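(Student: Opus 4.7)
The plan is to follow the standard pattern for conformal invariance of modulus, adapted so that the discrete part of the transboundary mass transforms by the bijection induced by $\hat{f}$ on complementary components. Let $g = f^{-1}:G' \to G$, which is also conformal, and let $\hat{f}:\hat{G}\to \hat{G'}$ be the homeomorphic extension (which sends $\mathcal{C}(G)$ bijectively to $\mathcal{C}(G')$). It suffices by symmetry to establish the inequality $\modu(\hat{f}(\Gamma)) \leq \modu(\Gamma)$; applying the same reasoning to $g$ gives the reverse inequality.

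Given any $\rho \in X(\Gamma)$, I would define the candidate density $\rho':\hat{G'}\to[0,\infty]$ by the conformal pull back on the interior and by transport under $\hat{f}$ on the discrete part:
\begin{equation*}
\rho'(y) = \rho(g(y))\,|g'(y)| \quad \text{for } y \in G', \qquad \rho'(q) = \rho(\hat{f}^{-1}(q)) \quad \text{for } q \in \mathcal{C}(G').
\end{equation*}
To check $\rho' \in X(\hat{f}(\Gamma))$, fix $\gamma \in \Gamma$ and write $\hat{f}\circ\gamma$ for the corresponding curve in $\hat{G'}$. The restriction of $\gamma$ to $G$ decomposes as a countable union of subpaths $\gamma_j$, each mapping onto a component of $|\gamma|\setminus\mathcal{C}(G)$, so the restriction of $\hat{f}\circ\gamma$ to $G'$ decomposes as the union of the subpaths $f\circ\gamma_j$. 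The standard conformal change of variables for one-dimensional integrals yields
\begin{equation*}
\int_{f\circ\gamma_j} \rho'\,ds = \int_{\gamma_j}\rho(g(f(x)))\,|g'(f(x))|\,|f'(x)|\,ds(x) = \int_{\gamma_j}\rho\,ds,
\end{equation*}
since $|g'(f(x))|\,|f'(x)|=1$. Summing over $j$ and using the bijection $\hat{f}$ between the complementary components hit by $\gamma$ and those hit by $\hat{f}\circ\gamma$, the admissibility condition for $\rho'$ along $\hat{f}\circ\gamma$ reduces exactly to the admissibility condition for $\rho$ along $\gamma$.

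For the mass, the two-dimensional conformal change of variables (with Jacobian $|g'|^2$) gives
\begin{equation*}
\int_{G'}(\rho')^2\,dA = \int_{G'} \rho(g(y))^2\,|g'(y)|^2\,dA(y) = \int_{G} \rho(x)^2\,dA(x),
\end{equation*}
and the discrete sum is invariant under the bijective relabeling $\hat{f}^{-1}:\mathcal{C}(G')\to\mathcal{C}(G)$, so
\begin{equation*}
\int_{G'}(\rho')^2\,dA + \sum_{q\in\mathcal{C}(G')}\rho'(q)^2 = \int_{G}\rho^2\,dA + \sum_{p\in\mathcal{C}(G)}\rho(p)^2.
\end{equation*}
Taking the infimum over $\rho \in X(\Gamma)$ proves one inequality, and applying the same construction with $g$ in place of $f$ proves the other.

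The only step requiring care is the decomposition of $\gamma$ into pieces $\gamma_j$ before applying the change of variables; once one grants the fact quoted earlier in the paper that a homeomorphism $f:G\to G'$ lifts to a homeomorphism $\hat{f}:\hat{G}\to\hat{G'}$, this decomposition and the resulting identification of discrete hitting sets are automatic, and the rest is a routine application of the classical conformal invariance of line and area integrals.
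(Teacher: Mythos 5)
Your argument is correct and is exactly the ``straightforward modification of the proof of the corresponding result for conformal modulus'' that the paper invokes without writing out: pull the density back conformally on $G'$, transport it by the bijection $\hat{f}^{-1}$ on $\mathcal{C}(G')$, and use the line- and area-integral change of variables together with the identification $\mathcal{C}(G')\cap|\hat{f}\circ\gamma|=\hat{f}(\mathcal{C}(G)\cap|\gamma|)$. No further comment is needed.
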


The proof is a straightforward modification of the proof of the corresponding result for conformal modulus. 

We will prove Proposition \ref{mainprop} by applying the following estimate. Given a domain $G \subset \hatc$ and disjoint sets $A,B \subset \hatc$, we denote 
\begin{eqnarray*}
\Gamma(A,B;G) &=& \{\text{paths in }\hat{G} \text{ joining } \pi_{G}(A) \text{ and }\pi_{G}(B)\}, \\ 
\modu(A,B;G) &=& \modu(\Gamma(A,B;G)).  
\end{eqnarray*}

\begin{proposition}
\label{mainprop2}
Let $D \subset \hatc$ be a countably connected domain. Fix an exhaustion $\Phi'=(D_j')$ of $D$ satisfying \eqref{sisalla}, $p\in \mathcal{C}(D)$, and an open neighborhood $U$ of $p$ in $\hatc$ 
such that $\overline{U} \in \mathcal{C}(D_n')$ for some index $n$. Moreover, suppose every $q \in \mathcal{C}(D) \setminus \{p\}$, $q \subset U$, is a disk. 
Then $\Phi'$ has a refinement $\Phi_p=(D_j(p))$ satisfying 
$$ 
D_j(p) \setminus U=D_j'\setminus U \quad \text{for all } j \in \mathbb{N}
$$ 
so that if $\Phi=(D_j)$ is any refinement of $\Phi_p$, then 
\begin{equation}
\label{anna} 
 \lim_{r \to 0} \limsup_{j \to \infty} \modu(S(z,r)\setminus p_j,\partial U;D_j) =0  
\end{equation}
for every $z \in p$, where $p_j$ is the element of $\mathcal{C}(D_j)$ containing $p$. 
\end{proposition}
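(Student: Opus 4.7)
The plan is to construct $\Phi_p$ so that inside $U$ the complementary component of $D_j(p)$ containing $p$ Hausdorff-approaches $p$ quickly, and the other complementary components of $D_j(p)$ closely approximate the disks of $\mathcal{C}(D)\setminus\{p\}$ sitting in $U$. The modulus estimate then follows from a logarithmic test function on the annulus $B(z,R)\setminus B(z,r)$, with $R=\dist(z,\partial U)/2$, combined with a dyadic disk-packing bound.

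First, denote by $q_j'\in\mathcal{C}(D_j')$ the component containing $p$; by \eqref{sisalla} and $\cup_j D_j'=D$ these nest and $q_j'\to p$ in Hausdorff distance. For each disk $q^\star\in\mathcal{C}(D)\setminus\{p\}$ inside $U$, the analogous sequence $q_j^\star\in\mathcal{C}(D_j')$ nests down to $q^\star$. I choose indices $j_1<j_2<\cdots$ so large that, for every $j\geq j_k$, the component $q_j'$ lies in the $2^{-k}$-neighborhood of $p$ and each ``substantial'' approximation $q_j^\star$ (those with $\diam(q^\star)\geq 2^{-k}$ and $q^\star$ meeting $B(z,R)$) lies in the $2^{-k}$-neighborhood of $q^\star$. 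I then define $D_j(p)$ to agree with $D_j'$ outside $U$ and, inside $U$, to use the structure at exhaustion level $j_{k(j)}$ of $\Phi'$, with $k(j)\to\infty$ slowly enough that $\Phi_p$ remains a valid exhaustion of $D$ and a refinement of $\Phi'$. Any further refinement $\Phi=(D_j)$ of $\Phi_p$ inherits these approximations: the component $p_j\supseteq p$ satisfies $p_j\subseteq q_{j_{k(j)}}'$, and the remaining components of $D_j$ in $U$ stay close to the disks $q^\star$.

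For the test function, set $\rho(w)=(|w-z|\log(R/r))^{-1}$ on $D_j\cap(B(z,R)\setminus B(z,r))$, set $\rho(q)=\diam(q)/(\dist(q,z)\log(R/r))$ for $q\in\mathcal{C}(D_j)\setminus\{p_j\}$ meeting this annulus, and $\rho\equiv 0$ otherwise. Admissibility follows from the inequality
\[
\log(R/r)\leq\int_\gamma\frac{ds}{|w-z|}+\sum_{q\in\gamma}\log\!\Bigl(1+\frac{\diam(q)}{\dist(q,z)}\Bigr),
\]
since each continuous subpath of $\gamma$ in $D_j$ accumulates $\log|w-z|$ via $\int ds/|w-z|$, and each jump across a complementary component $q$ changes $\log|w-z|$ by at most $\log(1+\diam(q)/\dist(q,z))\leq\diam(q)/\dist(q,z)$. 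The area term yields energy at most $2\pi/\log(R/r)$, while the component sum is estimated dyadically using shells $A_k=\{2^{-k-1}\leq|w-z|\leq 2^{-k}\}$: the disk-packing bound $\sum_{q^\star\subseteq A_k}\diam(q^\star)^2\leq C_1|A_k|\leq C_2\,2^{-2k}$ together with $\dist(\cdot,z)^2\geq 2^{-2k-2}$ on $A_k$ gives a universal per-shell bound for $\sum_q\diam(q)^2/\dist(q,z)^2$; summing over the $O(\log(R/r))$ shells between radii $r$ and $R$ yields total energy $O(1/\log(R/r))\to 0$ as $r\to 0$, uniformly in $j$.

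The main obstacle is transferring the disk-packing bound from the genuine disks $q^\star\in\mathcal{C}(D)\setminus\{p\}$ to the Jordan (not necessarily round) components of $D_j$ actually appearing in the transboundary sum. This is exactly what the controlled Hausdorff approximation from the construction of $\Phi_p$ secures: by choosing $k(j)$ so that the finitely many approximation errors at level $j$ are dominated by the $2^{-2k}$ per-shell budget, the disk-packing bound survives with a universal constant as $\Phi$ ranges over all refinements of $\Phi_p$.
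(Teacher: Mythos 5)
Your admissibility argument for the logarithmic test function is fine, but the energy estimate has a genuine gap, and it sits exactly where the paper has to work hardest: at components $q$ with $\diam(q)$ comparable to or much larger than $\dist(q,z)$. Your per-shell packing bound $\sum_{q^\star\subseteq A_k}\diam(q^\star)^2\leq C_1|A_k|$ only covers components essentially contained in a single shell, whereas a component meeting the annulus may dip toward $z$ and satisfy $\diam(q)\gg\dist(q,z)$; such a $q$ is charged $\rho(q)=\diam(q)/(\dist(q,z)\log(R/r))$, and nothing in your argument controls the sum of these squares. Such configurations are allowed by the hypotheses: take $p=\{z\}$ and disjoint round disks $q_i\subset U$ placed in the radial bands $\{d_i\leq|w-z|\leq d_{i-1}\}$ with $d_i=2^{-i^2}$, so that $\diam(q_i)/\dist(q_i,z)\approx 2^{2i}$. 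For $r\approx d_I$ all of $q_1,\dots,q_I$ meet the annulus, and the component part of your energy is of order $16^{I}/\log^2(R/r)\approx 16^{I}/I^4\to\infty$; even the capped weight $\log\bigl(1+\diam(q)/\dist(q,z)\bigr)$ only gives $O(1)$, not $o(1)$, for suitably spaced disks. So the proposed test function cannot prove \eqref{anna}. This is precisely why the paper's Lemma \ref{masterlemma2} does not use consecutive dyadic shells: the annuli $A_n$ are chosen adapted to the disk configuration, separated by gaps large enough (this uses the relative $1/100$-error in \eqref{master2}) that each disk-type component meets at most one $A_n$; components with $\diam\geq\dist(\cdot,z)$ then receive the constant weight $1/m$, their number per annulus is at most $30$ by an area argument, and only the genuinely small, nearly round components carry $\diam/\dist$-type weights.

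A second, related gap is in the construction of $\Phi_p$. You control, in absolute Hausdorff distance, only the approximations of ``substantial'' disks and of the component containing $p$ (and your choice refers to $z$ and $R$, whereas $\Phi_p$ must be fixed before $z\in p$ is chosen; that part is cosmetic). But at each level $j$ the refined exhaustion has only finitely many complementary components in $U$, so most of them are clusters swallowing infinitely many point components and small disks of $\mathcal{C}(D)$; these clusters need not be round or small at any prescribed rate, so neither $\diam(q)^2\lesssim\operatorname{Area}(q)$ nor any packing bound holds for them uniformly in $j$, and your closing ``budget'' remark does not reach them. What the modulus estimate needs, and what Lemma \ref{masterlemma1} delivers, is a dichotomy for every non-$p$ component of $D_j(p)$ inside $U$: either it approximates a single true disk with error at most $1/100$ of that disk's radius and of its distance to $p$ (used both for the gap-separated annuli and for the roundness and counting bounds), or it belongs to a family whose total diameter at level $j$ is at most $2^{-j-1}$ (which renders the point-type components negligible for crossing the annuli). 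Your construction provides neither property, so the estimate cannot be run uniformly in $j$ and over all refinements of $\Phi_p$.
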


We postpone the proof of Proposition \ref{mainprop2} until Section \ref{pro2sec}. We next show that Proposition \ref{mainprop} follows 
from Proposition \ref{mainprop2}. 

%%%%%%%%%%%%%%%%%%%%%%%%%%%%%%%%%%%%%%%%%%%%%%%%%%%%%%%%%%%%%%%%%%%%%%%%%%%%%%%%%%%%%%%%%%%%%%%%%%%%%%%%%%%%%%%%%

\subsection{From Proposition \ref{mainprop2} to Proposition \ref{mainprop}} \label{prosec}
Fix $\Phi'=(D_j')$, $p$, and $U$ as in Proposition \ref{mainprop}. Replacing $D$ with $f(D)$ and $\Phi'$ with $(f(D_j'))$ for some $f \in \mathcal{F}_{\Phi'}$ if necessary, 
we may assume that the assumptions of Proposition \ref{mainprop2} are valid. It then suffices to show that \eqref{anna} implies \eqref{aqua2} in Proposition \ref{mainprop}: if $\Phi$ is any refinement of $\Phi_p$, then $\hat{g}(p)$ is a disk for all $g \in \mathcal{F}_\Phi$. 

Fix a refinement $\Phi=(D_j)$ of $\Phi_p$. As before, let $f_j:D_j \to \tilde{D}_j$ be the associated conformal maps onto circle domains $\tilde{D}_j$. 
Fix $g \in \mathcal{F}_{\Phi}$. By passing to a subsequence if necessary, we may assume that $f_j \to g$. 
%We will abuse notation and write $\hat{f}_j(E)$ to denote the subset $\pi^{-1}_{f_j(D_j)}(\hat{f}_j(E))$ of $\hatc$. 

Taking another subsequence if necessary, we may assume that $(\hat{f}_j(p_j))$ Hausdorff converges to a closed disk $B$, where $p_j$ is the element 
of $\mathcal{C}(D_j)$ containing $p$ (recall that $B$ may have zero radius). 

Since $f_j \to g$, we have $B \subset \hat{g}(p)$. We will prove that in fact $B=\hat{g}(p)$. This implies \eqref{aqua2}. 

Applying suitable M\"obius transformations if necessary, we may assume that $U,f_j(\partial{U}),g(\partial{U})$ are all subsets 
of the unit disk in the complex plane. Towards contradiction, suppose that $B \subsetneq \hat{g}(p)$. Then 
$$
\dist(w_0,B) \geq 2 \delta \quad \text{for some }w_0 \in \partial \hat{g}(p) \text{ and }\delta >0, 
$$ 
where $\dist$ is the euclidean distance. It follows that there are sequences $(j_k)$ and $(z_k)$ such that $j_k > k$ and $z_k \in \partial p_{k} \subset D_{j_k}$ for all $k \in \mathbb{N}$, and 
\begin{eqnarray*} 
\dist(f_j(z_k),\hat{f}_j(p_j)) \geq \delta \quad \text{for all } j \geq j_k. 
\end{eqnarray*}
By passing to another subsequence if necessary, we may assume that 
$$
z_k \to z \in p. 
$$

Fix $k \in \mathbb{N}$ and $j \geq j_k$. We construct a suitable path family $\Gamma(j,k)$ and estimate its modulus to arrive at a contradiction. 
Let $w \in \mathbb{C}$ be the point in $\hat{f}_j(p_j)$ closest to $f_j(z_k)$, and denote  
$$
I = (w,f_j(z_k)), \quad \ell  = \text{ the line containing segment } I. 
$$ 
Let $V_j$ be the bounded component of $\mathbb{C} \setminus f_j(\partial U)$, and denote the $f_j(z_k)$- and $w$-components of 
$\overline{V}_j \cap (\ell \setminus I)$ by $P'$ and $Q'$, respectively. Moreover, let 
$$
P=  \hat{f}_j^{-1}(\pi_{f_j(D_j)}(P')), \, Q= \hat{f}_j^{-1}(\pi_{f_j(D_j)}(Q'))  \subset \hat{D}_j. 
$$ 

There are unique points $a,b \in \partial U$ so that $\pi_{D_j}(a) \in P$ and $\pi_{D_j}(b) \in Q$. Let $J_1,J_2$ be the connected components of $\partial U \setminus \{a,b\}$, and let 
$$
\Gamma(j,k)=\{ \text{paths joining } \pi_{D_j}(J_1) \text{ and } \pi_{D_j}(J_2) \text{ in } \pi_{D_j}(U) \setminus (P\cup Q) \}. 
$$
Then every $\gamma \in \Gamma(j,k)$ passes through $\pi_j(B(z,|z-z_k|))$, so if we denote $r_k=|z-z_k|$ and choose $k$ large enough so that $S(z,r_k) \subset U$, we have 
$$
\Gamma(j,k) \subset \Gamma(S(z,r_k)\setminus p_j,\partial U;D_j) 
$$
(observe that $\pi_{D_j}(p_j) \in Q$). Thus, 
$$
 \modu(\Gamma(j,k)) \leq \modu(S(z,r_k) \setminus p_j,\partial U;D_j) 
$$
so by \eqref{anna},  
\begin{equation}
\label{haara}
\lim_{k \to \infty} \limsup_{j \to \infty} \modu(\Gamma(j,k)) =0. 
\end{equation}

\begin{lemma} \label{sis}
We have 
\begin{equation} 
\label{polku}
\modu (\hat{f}_j\Gamma(j,k)) \geq M>0 \quad \text{for all } k \in \mathbb{N} \text{ and } j \geq j_k, 
\end{equation} 
where $\hat{f}_j\Gamma(j,k)=\{ \hat{f}_j \circ \gamma: \, \gamma \in \Gamma(j,k)\}$ and $M$ does not depend on $j$ or $k$. 
\end{lemma}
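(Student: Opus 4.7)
The plan is to exploit the length bound $|I| \geq \delta$ via a Fubini-type integration of the transboundary admissibility inequality across vertical test segments transverse to $I$.

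After choosing coordinates so that $\ell$ is the real axis, $w = 0$, $f_j(z_k) = L \geq \delta$, and $\hat f_j(p_j) \subset \{\Re z \leq 0\}$ (automatic, since $w$ is the closest point of the disk to $f_j(z_k)$), for each $t \in I = (0, L)$ I define $\gamma_t$ to be the connected component of $\{\Re z = t\} \cap V_j$ containing $t$. The concatenation $\overline{P'} \cup \overline I \cup \overline{Q'}$ is a straight chord in $\overline V_j$ from $f_j(a)$ to $f_j(b)$ that separates $V_j$ into two Jordan subdomains adjacent to the arcs $f_j(J_1)$ and $f_j(J_2)$; consequently $\gamma_t$ joins a point of $f_j(J_1)$ to a point of $f_j(J_2)$. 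Since $\gamma_t$ meets $\ell$ only at $t \in I \subset \ell \setminus (P' \cup Q')$ and is contained in $\{\Re z > 0\}$ away from $\hat f_j(p_j)$, the projection of $\gamma_t$ to $\hat{\tilde D}_j$ is a bona fide element of $\hat f_j\Gamma(j,k)$, teleporting only through complementary disks $q \subset V_j$ of $\tilde D_j$ distinct from $\hat f_j(p_j)$.

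Next, for any admissible $\rho \in X(\hat f_j\Gamma(j,k))$ and each $t \in I$ I apply the admissibility inequality to $\gamma_t$, integrate over $t \in I$, and invoke Fubini. Using that the tube $\bigcup_{t \in I}\gamma_t$ lies in $V_j \subset \mathbb{D}$ and that the set $\{t \in I : \gamma_t \cap q \neq \emptyset\}$, being contained in the orthogonal projection of $q$ onto $\ell$, has length at most $\diam(q)$, I obtain
$$
\delta \leq |I| \leq \int_{V_j \cap \tilde D_j} \rho\, dA + \sum_{q \subset V_j} \rho(q)\,\diam(q).
$$
Two applications of Cauchy--Schwarz, combined with $|V_j| \leq \pi$ and the disjoint-disks estimate $\sum_{q \subset V_j}\diam(q)^2 \leq 4$ (since $\sum_q \mathrm{area}(q) \leq |V_j|$), then yield
$$
\delta^2 \leq (\pi + 4)\Bigl(\int_{\tilde D_j}\rho^2\, dA + \sum_{q \in \mathcal{C}(\tilde D_j)} \rho(q)^2\Bigr).
$$
Taking the infimum over admissible $\rho$ gives $\modu(\hat f_j\Gamma(j,k)) \geq \delta^2/(\pi+4) =: M > 0$, independent of $j$ and $k$, as required.

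The main point requiring care is the verification in the first paragraph that $\gamma_t$ genuinely defines a path in $\hat f_j\Gamma(j,k)$. The three key ingredients are: the chord $\overline{P'}\cup\overline I\cup\overline{Q'}$ separates $V_j$ so that the endpoints of $\gamma_t$ land on $f_j(J_1)$ and $f_j(J_2)$; the choice of $w$ as the closest point of $\hat f_j(p_j)$ to $f_j(z_k)$ places the disk entirely in $\{\Re z \leq 0\}$, away from $\gamma_t$; and the inclusion $w \in \partial \hat f_j(p_j) \cap \overline{Q'}$ implies $\pi_{\tilde D_j}(\hat f_j(p_j)) \in \pi_{\tilde D_j}(Q')$, so that any path avoiding $\pi_{\tilde D_j}(Q')$ automatically avoids the quotient point of $p_j$ and hence cannot teleport through $\hat f_j(p_j)$.
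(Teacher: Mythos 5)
Your argument is essentially the paper's own proof: the same subfamily of vertical crosscuts over $I$, the same integration of the transboundary admissibility inequality with Fubini and Cauchy--Schwarz, and the same use of the disjointness of the image disks inside $V_j$ to control $\sum_q \diam(q)^2$. The only differences are cosmetic -- you invoke the normalization $V_j \subset \mathbb{D}$ (so $|V_j| \leq \pi$) where the paper uses the uniform diameter bound $N$ coming from locally uniform convergence, and you spell out the chord-separation reasons why the crosscuts join $f_j(J_1)$ to $f_j(J_2)$ and avoid $\hat{f}_j(p_j)$, which the paper asserts more briefly -- so the resulting constant $\delta^2/(\pi+4)$ versus $\pi\delta^2/(16N^2)$ is immaterial.
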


Combining \eqref{haara} with Lemmas \ref{modinvariance} and \ref{sis} leads to a contradiction, so once Lemma \ref{sis} has 
been proved we know that Proposition \ref{mainprop} follows from Proposition \ref{mainprop2}. 

\begin{proof}[Proof of Lemma \ref{sis}]
We consider the subfamily $\Gamma$ of $\hat{f}_j\Gamma(j,k)$ consisting of projections of segments orthogonal to $\ell$. More precisely, 
denote by $T$ the length of $I$, $T=|w-f_j(z_k)|$, and let $\eta(s)=(1-\frac{s}{T})w+\frac{s}{T}f_j(z_k)$, $0<s<T$, be an arc-length parametrization of $I$. Notice that $T \geq \delta$. 

Fix $0<s<T$, and denote by $\ell_s$ the line orthogonal to $\ell$ passing through $\eta(s)$. Then there is a component 
$I_s$ of $\ell_s \cap \overline{V}_j$ with endpoints $m_1 \in f_j(J_1)$ and $m_2 \in f_j(J_2)$ (recall that $V_j$ is the bounded component of $\mathbb{C} \setminus f_j(\partial U)$). Choose a parametrization 
 $\gamma_s$ of $\pi_{D_j}(I_s)$, and let 
$$
\Gamma=\{\gamma_s: \, 0<s<T\}.  
$$ 
Then $\Gamma \subset \hat{f}_j\Gamma(j,k)$, so it suffices to prove \eqref{polku} with $\hat{f}_j\Gamma(j,k)$ replaced by $\Gamma$. 

Fix $\rho \in X(\Gamma)$, and denote by $\mathcal{D}_j$ the family of disks $\tau \in \hat{f}_j(\mathcal{C}(D_j))$ satisfying $\tau \subset \pi_{D_j}(V_j)$. Then  
$$
1 \leq \int_{I_s} \rho \, ds + \sum_{q \in \mathcal{D}_j\cap |\gamma_s|} \rho(q)  \quad \text{ for all } 0<s<T. 
$$ 
Integrating from $0$ to $T$ and applying Fubini's theorem and H\"older's inequality yields 
\begin{eqnarray*}
\delta &\leq& T \leq \int_{f_j(U\cap D_j)} \rho \, dA + \sum_{\tau \in \mathcal{D}_j} \diam(\tau) \rho(\tau) \\
&\leq& |V_j|^{1/2} \Big(\int_{f_j(U\cap D_j)} \rho^2 \, dA \Big)^{1/2} +
\Big(\sum_{\tau \in \mathcal{D}_j} \diam(\tau)^2 \Big)^{1/2} \Big(\sum_{\tau \in \mathcal{D}_j} \rho(\tau)^2 \Big)^{1/2} \\ 
&\leq& \Big( \frac{4}{\pi}  |V_j| \Big)^{1/2}\Big(\Big(\int_{f_j(U\cap D_j)} \rho^2 \, dA \Big)^{1/2} + \Big(\sum_{\tau \in \mathcal{D}_j} \rho(\tau)^2 \Big)^{1/2} \Big),  
\end{eqnarray*}
where the last inequality follows since the disks $\tau$ are disjoint subsets of $V_j$. 

The uniform convergence of $(f_j)$ guarantees that there is 
$N>0$ independent of $j$ such that $\diam(V_j) \leq N$ for all $j$. Combining with the estimate above leads to   
$$
\frac{\pi \delta^2}{16N^2} \leq  \int_{f_j(U\cap D_j)} \rho^2 \, dA + \sum_{\tau \in \mathcal{D}_j} \rho(\tau)^2. 
$$
Since this holds for all $\rho \in X(\Gamma)$, we have $\modu(\Gamma)\geq \pi \delta^2/(16N^2)$. 
\end{proof} 

%%%%%%%%%%%%%%%%%%%%%%%%%%%%%%%%%%%%%%%%%%%%%%%%%%%%%%%%%%%%%%%%%%%%%%%%%%%%%%%%%%%%%%%%%%%%%%%%%%%%%%%%%%%%%%%%%

\subsection{Proof of Proposition \ref{mainprop2}} 
\label{pro2sec}
We use the following notation: if $G,V \subset \hatc$ are domains, then  
$$
\mathcal{C}(G,V)=\{q \in \mathcal{C}(G): \, q \subset V\}.  
$$
\begin{lemma} 
\label{masterlemma1}
Suppose $D$, $\Phi'$, $p$ and $U$ are as in Proposition \ref{mainprop2}. Then $\Phi'$ has a refinement $\Phi_p=(D_j(p))$ so that 
$D_j(p) \setminus U=D_j'\setminus U$ and 
$$
\mathcal{C}(D_j(p),U) = \hat{\mathcal{C}}_{e,j}\cup \hat{\mathcal{C}}_{d,j} \cup \{\hat{p}_j\} 
$$
for all $j \in \mathbb{N}$, where $\hat{p}_j \supset p$ and $\hat{p}_j \notin \hat{\mathcal{C}}_{e,j}\cup \hat{\mathcal{C}}_{d,j}$, 
\begin{equation} 
\label{master1}
\sum_{\hat{q}(j) \in \hat{\mathcal{C}}_{d,j}} \diam(\hat{q}(j))\leq 2^{-j-1}, 
\end{equation} 
and for every $\hat{q}(j) \in \hat{\mathcal{C}}_{e,j}$  there is $q=\overline{B}(x,t) \in \mathcal{C}(D,U)$, $t>0$, such that 
\begin{equation}
\label{master2}
\overline{B}(x,t) \subset \hat{q}(j) \subset B(x,t+s), \quad s=\min\left\{\frac{t}{100}, \frac{\dist(\hat{q}(j),p)}{100}\right\}. 
\end{equation}
\end{lemma}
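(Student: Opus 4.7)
The strategy is to construct $\Phi_p$ by choosing, at each level $j$, a finite antichain $\mathcal A_j$ of complementary components drawn from the hierarchy $\{\mathcal{C}(D_m'):m\ge j\}$; by \eqref{sisalla} any such antichain determines a refinement of $\Phi'$, and keeping $\mathcal A_j$ outside $U$ identical to $\mathcal{C}(D_j',\hatc\setminus U)$ guarantees $D_j(p)\setminus U=D_j'\setminus U$. Enumerate the positive-radius disks of $\mathcal{C}(D,U)\setminus\{p\}$ as $\{q_k=\overline{B}(x_k,t_k)\}$; by \eqref{sisalla} and the exhaustion property, the ancestor $\hat{q}_k^m\in\mathcal{C}(D_m')$ of $q_k$ Hausdorff-converges to $q_k$ as $m\to\infty$, and $\hat{p}^m\to p$ similarly.

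Fix $j$. Pick an integer $N=N_j$, levels $M_k\ge j$ for $k\le N$, and $M_p\ge j$, large enough that each $\hat{q}_k^{M_k}$ satisfies \eqref{master2} with slack (say $s/200$ in place of $s/100$), and that $\hat{p}^{M_p}$ lies in a prescribed small neighborhood of $p$. Declare the essentials $\hat{\mathcal C}_{e,j}=\{\hat{q}_k^{M_k}:k\le N\}$ and $\hat{p}_j=\hat{p}^{M_p}$, and take the dummies $\hat{\mathcal C}_{d,j}$ to be the remaining antichain members needed to absorb every $q_k$ with $k>N$ (and every other point of $\hatc\setminus D$ in $U$). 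The crucial flexibility is that antichain members may come from different levels: for a spatial cluster of tail disks one uses a single low-level ancestor whose diameter is the cluster's extent (much less than $\sum_{\text{cluster}}2t_k$), while for an isolated tail disk one uses a high-level ancestor of individually small diameter.

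The main obstacle is forcing \eqref{master1}. The key structural fact is that any accumulation point of $\{q_k\}$ in $\overline U$ must lie in $\mathcal{C}(D,U)=\{p\}\cup\{q_k\}$, because accumulation in the open set $D$ is impossible. Hence the accumulation set is a countable compact subset of $\{p\}\cup\bigcup_k q_k$ and, being countable and compact, is scattered. Choosing $N$ large enough that all accumulation disks of diameter at least some threshold $\epsilon_j$ are essentials, and then successively handling higher-order accumulations by including their accumulation disks, one forces the residual tail disks not absorbed by a widened essential or by $\hat{p}_j$ to be sparsely placed with diameters tending to zero; enclosing each such residual in a sufficiently high-level ancestor makes the dummy sum arbitrarily small. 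Once \eqref{master1}--\eqref{master2} hold at each level $j$, the monotonicity $D_j(p)\subset D_{j+1}(p)$ is secured by choosing the antichain at level $j+1$ to refine the antichain at level $j$, which is routine bookkeeping in the combinatorial hierarchy imposed by \eqref{sisalla}.
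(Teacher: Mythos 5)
Your overall frame (tight high-level ancestors for the positive-radius disks, an ancestor $\hat p_j$ for $p$, antichains nested from level to level via \eqref{sisalla}) is the same as the paper's, but the step that is supposed to produce \eqref{master1} --- and, simultaneously, the finiteness of $\mathcal{C}(D_j(p),U)$, without which $\Phi_p$ is not an exhaustion and hence not a refinement at all --- has a genuine gap. First, the assertion that the accumulation set of $\{q_k\}$ is ``a countable compact subset of $\{p\}\cup\bigcup_k q_k$ and, being countable and compact, is scattered'' is false as stated: the accumulation set in the plane can be uncountable (an entire boundary circle $\partial q_k$, or the possibly nondegenerate continuum $p$), and it can also meet point components of $\mathcal{C}(D,U)$, which are not among your $q_k$. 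Only the set of \emph{components} involved (i.e.\ the accumulation set in the quotient $\hat D$) is countable. Second, you essentially ignore that $\mathcal{C}(D,U)$ may contain infinitely many point components (the paper explicitly allows disks of zero radius); these are exactly what \eqref{master1} is about, since an ancestor of a point component containing no positive-radius element of $\mathcal{C}(D,U)$ can never satisfy \eqref{master2}, and you dispose of them in a parenthesis. Third, even after passing to $\hat D$, your ``successively handling higher-order accumulations'' is a derived-set (Cantor--Bendixson) recursion whose termination you never address: the rank of a countable compact set can be an arbitrary countable ordinal, so finitely many rounds of ``absorb all but finitely many components into the cover of their accumulation component'' need not leave a finite residual. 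Without termination you end up either with infinitely many dummies (so $D_j(p)$ is not bounded by finitely many Jordan curves) or with no control of $\sum\diam\le 2^{-j-1}$.

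The paper obtains both points with no rank analysis, and you could repair your argument by adopting its mechanism. Enumerate the point components $q_1,q_2,\ldots$ of $\mathcal{C}(D,U)$ and assign to $q_m$ an ancestor (a complementary component of some $D'_{j'}$, $j'\ge j$) of diameter at most $2^{-j-m-1}$; assign to each positive-radius disk a tight ancestor satisfying \eqref{master2}; let $\hat p_j$ be the element of $\mathcal{C}(D'_j,U)$ containing $p$; and define $\mathcal{C}(D_j(p),U)$ as the set of \emph{maximal} elements of this family, which are pairwise disjoint by \eqref{sisalla}. Then \eqref{master1} holds no matter how many dummies survive, because each surviving dummy is the assigned ancestor of some point component and $\sum_m 2^{-j-m-1}=2^{-j-1}$; there is no need to group tail components into clusters at all. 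Finiteness of the maximal family then follows from one compactness argument rather than an induction: since $\partial U$ lies on the boundary curves of $D'_n$, which are contained in $D$, elements of $\mathcal{C}(D,U)$ can accumulate only at components that again belong to $\mathcal{C}(D,U)$; if there were infinitely many pairwise disjoint maximal ancestors, points chosen in them would accumulate at some $Q\in\mathcal{C}(D,U)$, and since distinct complementary components of a fixed $D'_{j'}$ lie a positive distance apart, those points would eventually lie in the assigned ancestor of $Q$, contradicting maximality and disjointness. Your nesting remark for $D_j(p)\subset D_{j+1}(p)$ and the Hausdorff convergence of ancestors are fine and match the paper.
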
 

\begin{proof}
We have $\mathcal{C}(D,U) = \mathcal{C}_e \cup \mathcal{C}_d \cup \{p\}$, $p \notin  \mathcal{C}_e \cup \mathcal{C}_d$, where $\mathcal{C}_e$ is a family of disks with positive radius and $\mathcal{C}_d$ a 
family of point components. We enumerate the elements of $\mathcal{C}_d$: 
$$ 
\mathcal{C}_d = \{q_1,q_2,\ldots\}. 
$$
We define $\Phi_p=(D_j(p))$ as follows: First, let $D_j(p) \setminus U=D_j' \setminus U$, $j \in \mathbb{N}$. 
To describe the sets $D_j(p)\cap U$, assume that $j=1$ or $j\geq 2$, and $D_k(p)$ has been defined for all $k \leq j-1$. 

We denote by $\hat{p}_j$ the element of $\mathcal{C}(D'_j,U)$ containing $p$. We lose no generality by assuming that $\hat{p}_1 \subset U$. Then 
$\mathcal{C}(D_{j},U \setminus \hat{p}_j)$ is non-empty for all $j \geq 1$.

Each $q \in \mathcal{C}(D,U)$ is contained in some $\hat{q}(j)$ such that   
\begin{itemize}
\item[(i)] $\hat{p}(j)=\hat{p}_j$,  
\item[(ii)] $\hat{q}(j) \in \mathcal{C}(D'_{j'},U \setminus \hat{p}_j)$ for some $j' \geq j$, 
\item[(iii)] if $j \geq 2$ then $\hat{q}(j) \subset \hat{q}(j-1)$ for some $\hat{q}(j-1) \in \mathcal{C}(D_{j-1}(p),U)$, 
\item[(iv)] if $q=q_m \in \mathcal{C}_d$, then 
$$
\diam(\hat{q}_m(j)) \leq 2^{-j-m-1}, 
$$
\item[(v)] if $q=\overline{B}(x,t) \in \mathcal{C}_e$, then $\hat{q}(j)$ satisfies \eqref{master2}. 
\end{itemize}

Denote $\mathcal{Q}_j=\{\hat{q}(j) : \, q \in \mathcal{C}(D,U)\}$. If $\hat{q}(j),\hat{q}'(j) \in \mathcal{Q}_j$, then either 
$\hat{q}(j) \cap \hat{q}'(j) =\emptyset$ or one is contained in the other. Thus we can define $D_j(p) \cap U$ as the domain 
for which $\mathcal{C}(D_j(p),U)$ is the set of maximal elements in $\mathcal{Q}_j$. 

Properties (i)--(iii) guarantee that $\{D_j(p)\}$ is a refinement of $\{D_j'\}$. Moreover, every $\hat{q}(j)$ 
satisfies (iv) or (v). We define 
\begin{eqnarray*} 
\hat{\mathcal{C}}_{d,j} &=& \{\hat{q}(j) \in \mathcal{C}(D_j(p),U) \setminus \{\hat{p}_j\}: \, \hat{q}(j) \text{ satisfies (iv)} \}, \\ 
\hat{\mathcal{C}}_{e,j} &=& \{\hat{q}(j) \in \mathcal{C}(D_j(p),U) \setminus \{\hat{p}_j\}: \, \hat{q}(j) \text{ satisfies (v)} \}. 
\end{eqnarray*}
\end{proof} 

%%%%%%%%%%%%%%%%%%%%%%%%%%%%%%%%%%%%%%%%%%%%%%%%%%%%%%%%%%%%%%%%%%%%%%%%%%%%%%%%%%%%%%%%%%%%%%%%%%%%%%%%%%%%%%%%%

We complete the proof of Proposition \ref{mainprop2} by showing that any refinement $\Phi=(D_j)$ of the $\Phi_p$ in Lemma \ref{masterlemma1} satisfies the remaining estimate 
\eqref{anna}, i.e., 
$$
 \lim_{r \to 0} \limsup_{j \to \infty} \modu(S(z,r)\setminus p_j,\partial U;D_j) =0  \quad \text{for every } z \in p.
$$ 

\begin{lemma} 
\label{masterlemma2} 
Every refinement $\Phi=(D_j)$ of $\Phi_p$ satisfies \eqref{anna}. 
\end{lemma}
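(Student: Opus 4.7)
The plan is a dyadic chain-of-annuli argument for the transboundary modulus, combining subadditivity of reciprocal transboundary modulus across a tower of thin annuli with a uniform modulus bound on each annulus supplied by the refinement structure of Lemma~\ref{masterlemma1}. The components of $D_j$ inside $U$ other than $p_j$ are, by the refinement hypothesis, sub-components of some $\hat{q}(j')$ with $j'\geq j$, and so are either small in the sense of \eqref{master1} or close to genuine disks $\overline B(x,t)\in\mathcal{C}(D,U)$ in the sense of \eqref{master2}; this is what powers the uniform annulus estimate.

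Fix $z\in p$ and $R=\dist(z,\partial U)>0$. For $0<r<R$ put $r_k=R\cdot 2^{-k}$ and $A_k=A(z,r_k,r_{k-1})$ for $k=1,\ldots,N$, where $N=\lfloor\log_2(R/r)\rfloor$. Set $\Gamma=\Gamma(S(z,r)\setminus p_j,\partial U;D_j)$ and let $\Gamma_k$ be the family of paths in $\hat D_j$ contained in $\overline{A_k}$ joining $S(z,r_k)$ and $S(z,r_{k-1})$. Each curve in $\Gamma$ contains disjoint sub-paths in each $\Gamma_k$, so the standard subadditivity of reciprocal transboundary modulus (proved just as in the classical case via convex combinations $\rho=\sum c_k\rho_k$ and Cauchy--Schwarz) yields
\[
\frac{1}{\modu(\Gamma)}\;\geq\;\sum_{k=1}^N\frac{1}{\modu(\Gamma_k)}.
\]
Consequently it is enough to prove a uniform bound $\limsup_{j\to\infty}\modu(\Gamma_k)\leq C$ independent of $k$, since then $\limsup_j\modu(\Gamma)\leq C/N$, which tends to $0$ as $r\to 0$.

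To bound $\modu(\Gamma_k)$ I take the admissible weight
\[
\rho_k(w)=r_k^{-1}\chi_{A_k}(w)\quad(w\in D_j),\qquad
\rho_k(q)=r_k^{-1}\diam(q\cap\overline{A_k})\quad(q\in\mathcal{C}(D_j)).
\]
Admissibility is the standard radial-distance accounting: any path joining the two bounding circles of $\overline{A_k}$ covers radial distance at least $r_k$, apportioned between arc-length in $D_j\cap A_k$ and diameters (in $\overline{A_k}$) of visited components. The area integral is $\int\rho_k^2\,dA\leq |A_k|/r_k^2\leq 3\pi$. For the component sum I decompose $\mathcal{C}(D_j,U)$ by the refinement level: components inherited from $\hat{\mathcal{C}}_{d,j'}$ contribute at most $r_k^{-2}\sum_{j'\geq j}(2^{-j'-1})^2\leq 2^{-2j}/r_k^2$, which vanishes as $j\to\infty$ for each fixed $r_k$; components inherited from $\hat{\mathcal{C}}_{e,j'}$ are in bijection with disjoint disks $\overline B(x_i,t_i)\in\mathcal{C}(D,U)$ with $\diam(\hat q(j'))\leq 2.02\,t_i$ by \eqref{master2}, and disjointness combined with the inclusion of these disks in an $O(r_k)$-thickening of $A_k$ gives $\sum t_i^2\leq Cr_k^2$; the distinguished component $p_j$ contributes $(\diam(p_j\cap\overline{A_k})/r_k)^2\leq 16$. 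Summing, $\limsup_{j\to\infty}\modu(\Gamma_k)\leq C$ uniformly in $k$, and the chain estimate then delivers \eqref{anna}.

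The main obstacle is transferring the $2$-dimensional disjoint-area bound from the genuine disks $\overline B(x_i,t_i)\in\mathcal{C}(D,U)$ to their approximating components $\hat q(j')\in\hat{\mathcal{C}}_{e,j'}$, for \emph{a priori} these disks may accumulate at $p$ and hence at $z$. Here the Whitney-type separation $s\leq\dist(\hat q(j'),p)/100$ guaranteed by \eqref{master2} is essential: it ensures that $\diam(\hat q(j'))$ and $\dist(\hat q(j'),z)$ remain comparable to $2t_i$ and $\dist(\overline B(x_i,t_i),z)$ within a factor of $1.01$, so that the packing estimate carries over faithfully and the uniform-in-$j$ bound on $\modu(\Gamma_k)$ survives. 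A secondary technicality is that the component $p_j$ meets many annuli, so its contribution to the recombined weight $\rho(p_j)=\sum c_k\rho_k(p_j)$ must be shown to remain uniformly bounded after Cauchy--Schwarz; this is immediate from $\rho_k(p_j)\leq 4$ and the normalization $\sum c_k=1$, so the chain estimate is not spoiled.
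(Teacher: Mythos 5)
There is a genuine gap, and it sits at the heart of your argument: the ``serial rule'' $1/\modu(\Gamma)\geq\sum_k 1/\modu(\Gamma_k)$ is not valid for transboundary modulus in this situation. Its proof (optimal convex combination $\rho=\sum_k c_k\rho_k$, energy decoupling) requires the weights $\rho_k$ to have pairwise disjoint supports, and here they do not: a single complementary component can meet many of your dyadic annuli, and then the energy of $\rho$ contains cross terms that do not decouple. The worst offender is $p_j$ itself. Since $p_j\supset p\ni z$ and $p$ may be a nondegenerate continuum (this is exactly the interesting case of Proposition \ref{mainprop2}), $p_j$ crosses a fixed positive proportion of all dyadic annuli between $\diam(p)$ and $r$, so $\rho_k(p_j)\geq 1$ for all those $k$, hence $\rho(p_j)=\sum_k c_k\rho_k(p_j)$ is of order $1$ no matter how the $c_k$ are chosen, and the component sum in the energy picks up an additive constant. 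Your closing remark that the contribution of $p_j$ ``remains uniformly bounded'' and therefore ``does not spoil the chain estimate'' has the logic backwards: a bounded, non-vanishing additive term is precisely what spoils it, since the whole point is that the total energy must tend to $0$ as $r\to0$; you only get $\modu(\Gamma)\leq C/N+O(1)$. That the reciprocal-sum inequality genuinely fails (rather than merely resisting this proof) can be seen already for $G=\hatc\setminus q$ with $q$ a segment from $z$ to radius $R$: curves may touch $\pi_G(q)$ near the inner circle and re-emerge near the outer one, so blocking them forces $\rho(q)\approx1$ and $\modu(\Gamma)\gtrsim1$, while each $\modu(\Gamma_k)\asymp1$, contradicting the claimed inequality. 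A secondary (repairable, but symptomatic) inaccuracy: a disk component meeting $\overline{A_k}$ need not lie in an $O(r_k)$-thickening of $A_k$ — large disks reaching close to $z$ can span many dyadic annuli — so your packing bound needs a counting argument for the big disks, and those same multi-annulus disks feed additional cross terms into the convex combination.

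This is exactly the difficulty the paper's proof is engineered to avoid, and it does so without any serial rule. The radii $R_n=e^{v_n}$ are chosen sparsely and \emph{adapted to the disk components}: using the enlargement control \eqref{master2}, $v_{n+1}$ is taken so small that every (slightly inflated) disk meeting $A_n$ misses $\overline{B}(z,R_{n+1})$, so each disk-like component of the refined exhaustion meets at most one of the chosen annuli. The disk-like components are then split into ``big'' ones (at most $30$ per annulus, each given weight $1/m$) and ``small'' ones (for which $\diam^2\leq2\,\mathrm{Area}$), the point-like components are harmless by \eqref{master1} (their total diameter is less than half the width of any annulus, so they cannot destroy the radial admissibility estimate), and a \emph{single} admissible function $\rho=\sum_{n=1}^m\rho_n$ of total energy $\lesssim 1/m$ is produced directly for the whole family; crucially $\rho(p_j)=0$ and admissibility is verified by the radial projection estimate within each annulus. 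If you want to salvage a chain-type argument, you would at minimum have to build in these two devices — annuli adapted to the disks so that no component is shared between annuli, and a counting bound replacing the $\diam/r_k$ weight on components comparable to or larger than their distance to $z$ — and treat $p_j$ separately (the curves relevant to the application avoid $\pi_{D_j}(p_j)$, which is how the paper's admissibility argument closes); as written, the proposal does not prove \eqref{anna}.
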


\begin{proof} 
Fix $z \in p$ and let $v$ be the largest integer such that 
$$
B(z,e^v)=B(z,R) \subset U. 
$$ 
It suffices to show that if $j$ is large enough, then 
\begin{equation} 
\label{jouru}
\modu(S(z,r)\setminus p_j,S(z,R);D_j) \leq \epsilon(r) \to 0 \quad \text{as } r \to 0, 
\end{equation} 
where $\epsilon(r)$ does not depend on $j$. We will do this by first constructing a suitable sequence of disjoint annuli, 
and then applying them to find admissible functions.

First, let $v_1=v$. Then, fix $n \geq 1$ and assume that $v_n<v_{n-1}<\cdots <v_1$ have been defined. Denote $R_k=e^{v_k}$ and 
$A_k=B(z,R_k) \setminus \overline{B}(z,R_k/e)$, and let $v_{n+1} <v_n$ be the largest integer such that 
$$
\overline{B}(z,R_{n+1}) \cap \overline{B}(x,t+s) = \emptyset \quad \text{for all } q=\overline{B}(x,t) \in \mathcal{C}(D,U), \, q \cap A_n \neq \emptyset,   
$$ 
where $s$ is as in \eqref{master2}. 

Recall from Lemma \ref{masterlemma1} that 
$$
\mathcal{C}(D_j(p),U) = \hat{\mathcal{C}}_{e,j}\cup \hat{\mathcal{C}}_{d,j} \cup \{\hat{p}_j\}. 
$$
We denote $\hat{\mathcal{C}}_{e,j}=\hat{\mathcal{C}}_{b,j} \cup \hat{\mathcal{C}}_{s,j}$, where
\begin{eqnarray*}
\hat{\mathcal{C}}_{b,j} &=& \{\hat{q}(j) \in \hat{\mathcal{C}}_{e,j}: \, \diam(\hat{q}(j)) \geq \dist(\hat{q}(j),z)  \}   , \\
\hat{\mathcal{C}}_{s,j} &=& \{\hat{q}(j) \in \hat{\mathcal{C}}_{e,j}: \, \diam(\hat{q}(j)) < \dist(\hat{q}(j),z)  \}. 
\end{eqnarray*} 
Moreover, let $\mathcal{C}_j=\mathcal{C}_{d,j} \cup \mathcal{C}_{b,j} \cup \mathcal{C}_{s,j}$, where 
\begin{eqnarray*}
\mathcal{C}_{d,j} = \cup_{m \geq j} \hat{\mathcal{C}}_{d,m}, \quad 
\mathcal{C}_{b,j} = \cup_{m \geq j} \hat{\mathcal{C}}_{b,m}, \quad 
\mathcal{C}_{s,j} = \cup_{m \geq j} \hat{\mathcal{C}}_{s,m}. 
\end{eqnarray*}

Fix a refinement $\Phi=(D_j)$ of $\Phi_p$, and $u < v-100$. We denote $r=e^u$. Let $j$ be large enough so that $2^{-j+1} < r/e$, and $p_j$ the element of $\mathcal{C}(D_j,U)$ 
containing $p$. Since $\Phi$ is a refinement of $\Phi_p$, we have $\mathcal{C}(D_j, U \setminus p_j)\subset \mathcal{C}_j$.  
In particular, 
\begin{eqnarray*}  
\mathcal{C}(D_j,U \setminus p_j)  = \mathcal{D}_j \cup \mathcal{B}_j \cup \mathcal{S}_j, \quad \text{where } 
 \mathcal{D}_j \subset \mathcal{C}_{d,j}, \,\, \mathcal{B}_j  \subset \mathcal{C}_{b,j}, \,\, \mathcal{S}_j \subset \mathcal{C}_{s,j}. 
\end{eqnarray*}
By Lemma \ref{masterlemma1} and the definition of the above sets, the following hold: First, 
\begin{equation} 
\label{mila1} 
\sum_{q(j) \in \mathcal{D}_j} \diam(q(j)) \leq 2^{-j} < \frac{r}{2e}. 
\end{equation} 
Secondly, denoting $\mathcal{B}_j(n)=\{q(j) \in \mathcal{B}_j: \, q(j) \cap A_n \neq \emptyset \}$, we have 
$\mathcal{B}_j(n) \cap \mathcal{B}_j(n')=\emptyset$ if $n \neq n'$. Moreover, since every $q(j) \in \mathcal{B}_j(n)$ contains a disk whose area is comparable to the area of $A_n$, the cardinality of $\mathcal{B}_j(n)$ has an absolute bound;  
\begin{equation} 
\label{mila2}
|\mathcal{B}_j(n)| \leq 30 \quad \text{for all } n \in \mathbb{N}. 
\end{equation} 
Finally, every $q(j) \in \mathcal{S}_j$ satisfies 
\begin{equation} 
\label{mila3} 
\diam(q(j))^2 \leq 2\operatorname{Area}(q(j)). 
\end{equation} 
Moreover, denoting $\mathcal{S}_j(n)=\{q(j) \in \mathcal{S}_j: \, q(j) \cap A_n \neq \emptyset \}$, 
we have $\mathcal{S}_j(n) \cap \mathcal{S}_j(n')=\emptyset$ if $n \neq n'$. 

We construct an admissible function 
\begin{equation}
\label{uur}
\rho \in X(\Gamma(S(z,r)\setminus p_j,S(z,R);D_j)) 
\end{equation} 
as follows: let $m$ be the largest integer such that $v_{m+1} \geq u$, and $1 \leq n \leq m$. 
Define $\rho_n:\hat{D}_j \to [0,\infty]$, 
\begin{eqnarray*} 
\rho_n(w)=  \left\{ \begin{array}{ll} 
\frac{1}{m},  & w \in \mathcal{B}_j(n), \\
\frac{2e\diam(w)}{mR_n} & w \in \mathcal{S}_j(n), \\ 
\frac{2}{m |w-z| }, & w \in A_n \cap D_j,  
\end{array} \right. 
\end{eqnarray*}
and $\rho_n(w)=0$ otherwise. We claim that 
\begin{equation}
\label{mosof} 
\frac{1}{m} \leq   \int_{\gamma} \rho_n \, ds +\sum_{q \in \mathcal{C}_j\cap |\gamma|} \rho_n(q)  
\end{equation} 
for all $\gamma \in \Gamma(S(z,r)\setminus p_j,S(z,R);D_j)$. Fix such a $\gamma$, and denote 
\begin{eqnarray*} 
\Omega_1 &=& \{R_n/e < T < R_n: \, T=|y-z| \text{ for some } y \in |\gamma| \cap D_j\}, \\
\Omega_2 &=& \{R_n/e < T < R_n: \, T=|y-z| \text{ for some } y \in w, \, w \in |\gamma| \cap \mathcal{S}_j(n)\}, \\
\Omega_3 &=&  \{R_n/e < T < R_n: \, T=|y-z| \text{ for some } y \in w, \, w \in |\gamma| \cap \mathcal{D}_j(n)\}.  
\end{eqnarray*} 
We may assume that $\gamma$ does not intersect any $w \in \mathcal{B}_j(n)$, otherwise \eqref{mosof} follows directly from the definition of $\rho_n$. 
We then have 
$$
\int_{\Omega_1} \frac{dT}{T}+\int_{\Omega_2} \frac{dT}{T}+\int_{\Omega_3} \frac{dT}{T} \geq 1, 
$$
which combined with \eqref{mila1} yields 
$$
\int_{\Omega_1} \frac{dT}{T}+\int_{\Omega_2} \frac{dT}{T} \geq \frac{1}{2}. 
$$
The definition of $\rho_n$ in $A_n \cap D_j$ yields 
$$
\int_{\gamma} \rho_n  \, ds \geq \frac{2}{m} \int_{\Omega_1} \frac{dT}{T}. 
$$
On the other hand, combining the definition of $\rho_n$ in $\mathcal{S}_j(n)$ with inequality 
$$
\frac{e(\beta-\alpha)}{R_n} \geq \log \beta -\log \alpha, \quad \frac{e}{R_n} \leq \alpha \leq \beta, 
$$ 
yields 
$$
\sum_{q \in \mathcal{S}_j(n) \cap |\gamma| } \rho_n(q) \geq \frac{2}{m} \int_{\Omega_2} \frac{dT}{T}. 
$$
Combining the estimates yields \eqref{mosof}. In particular, $\rho=\sum_{n=1}^m \rho_n$ satisfies \eqref{uur}, i.e., 
$\rho$ is admissible for $\Gamma(S(z,r)\setminus p_j,S(z,R);D_j)$. 

We prove \eqref{jouru} by estimating the energy 
\begin{equation}
\label{hitme}
\int_{D_j \cap U} \rho^2 \, dA + \sum_{w \in \mathcal{C}_j} \rho(w)^2 
\end{equation}
from above. First, we have 
\begin{equation}
\label{contu}
\int_{D_j \cap U} \rho^2 \, dA \leq \frac{4}{m^{2}}\sum_{n=1}^m  \int_{D_j \cap A_n} \frac{dA(w)}{|w-z|^2}  \leq \frac{8\pi}{m}. 
\end{equation}

In order to estimate the sum in \eqref{hitme}, we recall that each $w \in \mathcal{B}_j \cup \mathcal{S}_j$ intersects at most one $A_n$. By \eqref{mila2}, 
\begin{equation}
\label{disku1}
\sum_{w \in \mathcal{B}_j} \rho(w)^2 \leq \sum_{n=1}^m \frac{|\mathcal{B}_j(n)|}{m^2} \leq \frac{30}{m}.  
\end{equation} 
Finally, since every $w \in \mathcal{S}_j(n)$ is a subset of $B(z,2R_n)$, \eqref{mila3} yields 
\begin{eqnarray}
 \label{disku2}
\sum_{w \in \mathcal{S}_j} \rho(w)^2 
&\leq& \frac{4e^2}{m^2} \sum_{n=1}^m  \sum_{w \in \mathcal{S}_j(n)} \frac{\diam(w)^2}{R_n^2}  \\ 
\nonumber &\leq&  \frac{8e^2}{m^2} \sum_{n=1}^m  \frac{\operatorname{Area}(B(z,2R_n))}{R_n^2}= \frac{32\pi e^2}{m}. 
\end{eqnarray} 
Combining \eqref{contu}, \eqref{disku1} and \eqref{disku2}, we conclude 
$$
\int_{D_j \cap U} \rho^2 \, dA + \sum_{w \in \mathcal{C}_j} \rho(w)^2 \leq \frac{1000}{m} \to 0 \quad \text{as } r \to 0, 
$$
and \eqref{jouru} follows. The proof is complete. 
\end{proof} 

%%%%%%%%%%%%%%%%%%%%%%%%%%%%%%%%%%%%%%%%%%%%%%%%%%%%%%%%%%%%%%%%%%%%%%%%%%%%%%%%%%%%%%%%%%%%%%%%%%%%%%%%%%%%%%%%%

\section{Proof of Theorem \ref{mainexample1} }

\subsection{Construction of the domain}
We will construct a countably connected \emph{square domain}\footnote{The construction of $D$ is flexible in terms of the shapes of the complementary components. In particular, there are circle domains $D$ satisfying the requirements of Theorem \ref{mainexample1}. We use squares in our construction for convenience of presentation. } $D\subset \hatc$ so that $\{0\} \in \mathcal{C}(D)$, and an exhaustion $\Phi$ of $D$ so that $\hat{f}_\Phi(\{0\})$ is non-trivial. 
The following result, which follows from the modulus estimate in \cite[Theorem 6.2]{Sch95}, then shows that $f_\Phi(D)$ cannot be a circle domain. 
\begin{proposition} 
\label{terro}
If $f$ is a conformal map from domain $D \subset \hatc$ with the above properties onto a circle domain, then $\hat{f}(\{0\})$ is a point-component. 
\end{proposition}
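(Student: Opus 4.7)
The plan is a contradiction argument that exploits the conformal invariance of transboundary modulus (Lemma~\ref{modinvariance}) against Schramm's lower modulus bound in circle domains.

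Suppose for contradiction that $f:D \to G$ is a conformal map onto a circle domain $G$ and that $\hat{f}(\{0\})$ is a non-trivial disk $B=\overline{B}(w_0,\rho)$ with $\rho>0$. The strategy is to exhibit a family of path families $\Gamma_r$ in $\hat{D}$ whose transboundary modulus tends to $0$ as $r \to 0$, and then observe that the images $\hat{f}(\Gamma_r)$ in the circle domain $\hat{G}$ are annular families separating the non-degenerate disk $B$ from a fixed compact set; Schramm's bound from \cite[Theorem 6.2]{Sch95} will force $\modu(\hat{f}(\Gamma_r))$ to stay uniformly bounded away from $0$, contradicting conformal invariance.

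Concretely, first I would fix $R>0$ small enough that $\overline{B}(0,R)$ meets only countably many complementary components of $D$, and for each small $r \in (0,R)$ consider
$$
\Gamma_r = \Gamma(S(0,r),S(0,R);D).
$$
The construction of the square domain $D$ carried out in Section~3.2 must be arranged so that the nested squares around $0$ are so densely packed that $\modu(\Gamma_r)\to 0$ as $r \to 0$; this is precisely the property ensuring that $\{0\}$ is a ``thin'' point component in the transboundary sense. I would use essentially the same admissible-function technique as in Lemma~\ref{masterlemma2}, summing radial and discrete contributions across annuli $A_n=B(0,R_n)\setminus\overline{B}(0,R_n/e)$, to certify $\modu(\Gamma_r)\to 0$ once the square components are described.

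Next, by Lemma~\ref{modinvariance}, $\modu(\hat{f}(\Gamma_r))=\modu(\Gamma_r)\to 0$. On the other hand, since $\hat{f}(\{0\})=B$ has positive radius $\rho$, for every sufficiently small $r$ the image $\hat{f}(S(0,r))$ lies in an arbitrarily small neighborhood of $B$, while $\hat{f}(S(0,R))$ stays in a fixed compact subset of $\hat{G}$ at positive distance from $B$. Thus $\hat{f}(\Gamma_r)$ is contained in a path family in $\hat{G}$ separating $B$ from a fixed set $K\subset \hat{G}\setminus B$. Schramm's lower bound \cite[Theorem 6.2]{Sch95} for transboundary modulus in circle domains then yields a constant $c=c(\rho,B,K)>0$ with $\modu(\hat{f}(\Gamma_r))\geq c$ for all small $r$, contradicting $\modu(\hat{f}(\Gamma_r))\to 0$. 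Therefore $\hat{f}(\{0\})$ must be a point.

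The main obstacle is verifying the decay $\modu(\Gamma_r)\to 0$: this depends on the explicit geometry of the complementary squares accumulating at $0$, and has to be calibrated together with the construction in Section~3.2. The invocation of \cite[Theorem 6.2]{Sch95} on the circle-domain side is comparatively soft — one only needs that an annular family separating a non-degenerate disk from a fixed compact set in a circle domain has uniformly positive transboundary modulus, which is exactly the regime that theorem addresses.
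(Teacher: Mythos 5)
Your outline is essentially the argument the paper intends: in the text, Proposition \ref{terro} is justified only by citing the modulus estimate of \cite{Sch95}, and the fleshed-out proof is exactly your contradiction via conformal invariance (Lemma \ref{modinvariance}), with vanishing transboundary modulus of $\Gamma(S(0,r),S(0,R);D)$ as $r\to 0$ on the domain side and a uniform positive lower bound for the image families on the circle-domain side.

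Two points in your write-up should be repaired. First, the step you flag as the main obstacle, $\modu(S(0,r),S(0,R);D)\to 0$, requires no calibration with the construction: every complementary component of $D$ other than $\{0\}$ is a square, hence uniformly fat, so the test functions of Lemma \ref{masterlemma2} (density comparable to $1/(m|x|)$ on the annuli $B(0,R_n)\setminus\overline{B}(0,R_n/e)$, weight comparable to $\diam(q)/(mR_n)$ on components small relative to their distance to $0$, weight $1/m$ on the boundedly many fat components comparable to an annulus) give energy $O(1/m)$ independently of the parameters $R_k$, $M_m$, $\epsilon(j)$. This matters, because those parameters are committed to producing the opposite behaviour for the exhausted domains $D_j$ (Proposition \ref{juba1}); the decay in $D$ must be, and is, unconditional, so there is no interplay to check and no risk of circularity. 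Second, the image family $\hat{f}(\Gamma_r)$ is the family of \emph{all} curves in $\widehat{f(D)}$ joining $f(S(0,r))$ to $f(S(0,R))$, a connecting family rather than a separating ``annular'' one; the lower bound for its modulus uses that $f(S(0,r))$ encloses the nondegenerate disk $B=\hat{f}(\{0\})$, hence has diameter at least $\diam(B)$, that $f(S(0,R))$ is a fixed continuum at positive distance from $B$, and that the complementary components of the target are disjoint disks, so the Cauchy--Schwarz/length--area step of Lemma \ref{sis} applies. Thus your argument really invokes two distinct cofat-domain estimates (an upper bound in $D$, a Loewner-type lower bound in the circle domain); be explicit about both, and either check that the estimate quoted from \cite{Sch95} covers both directions or prove the missing one directly along the lines of Lemma \ref{sis}. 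With these clarifications your proof is correct and coincides with the paper's.
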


We start the construction of $D$ with a sequence of disjoint squares 
$$
Q_k=[a_k-R_k,a_k+R_k]\times [-R_k,R_k], \quad R_1=1, \, R_k < a_k, 
$$ 
where $(a_k)_{k=1}^\infty$, $(R_k)_{k=1}^\infty$ are decreasing sequences converging to zero, so that 
\begin{equation} \label{kulta}
D_k:=\dist(Q_k,Q_{k+1}) =a_k-(a_{k+1}+R_k+R_{k+1})= 2^{-k}R_{k+1}. 
\end{equation} 
Each $Q_k$, $1 \leq k \leq j$, is surrounded by a sequence $(Q_{k,j})$ of inflated squares 
$$
Q_{k,j}=[a_k-T_{k,j},a_k+T_{k,j}]\times [-T_{k,j},T_{k,j}], \quad T_{k,j}=R_k+2^{-j-1}D_k. 
$$
We also denote 
$$
Q_{0,j}=[-T_j,T_j]\times [-T_j,T_j], \quad T_j=a_{j+1}+R_{j+1}+D_j/2. 
$$
Then 
$$
\cup_{k=j+1}^\infty Q_{j} \subset \operatorname{int}(Q_{0,j}), \quad \cup_{k=1}^j Q_{k,j} \cap Q_{0,j}= \emptyset \quad \text{for every } j \in \mathbb{N}. 
$$

%let $(M_m)$ be an increasing sequence of positive integers so that 
%\begin{equation} 
%\label{ammi} 
%M_m \geq 2^m \quad \text{for all } m \in \mathbb{N}. 
%\end{equation} 
Next, for $m \in \mathbb{N}$ and $1 \leq \ell \leq M_m$ ($M_m$ will be chosen later), let 
\begin{equation}
\label{juou}
q_{m,\ell} = [(\ell-1)(s_m+d_m),(\ell-1) s_m+\ell d_m] \times [0,d_m], 
\end{equation}
where $d_m,s_m$ are positive numbers so that 
$$
(M_m-1)s_m+M_md_m=1 \quad \text{and } \quad d_m \geq s_m. 
$$ 
In particular, $d_m \leq M_m^{-1}$. For a fixed $m \in \mathbb{N}$, the sets $q_{m,\ell}$ are evenly spaced squares of sidelength $d_m$ inside the rectangle $[0,1]\times [0,d_m]$. 

For each $m \in \mathbb{N}$ and $1 \leq k \leq m$, let $\phi_{k+1,m}$ be the M\"obius transformation so that $\phi_{k+1,m}(\infty)=\infty$, 
\begin{eqnarray*}
\phi_{k+1,m}(0,0) &=& (a_{k+1}+(1-s_m)T_{k+1,m-1},-R_{k+1}) \quad \text{ and } \\ 
\phi_{k+1,m}(1,0) &=& (a_{k+1}+(1-s_m)T_{k+1,m-1},R_{k+1}).  
\end{eqnarray*} 
We denote 
\begin{equation}
\label{kahwa} 
t_{k,m}=(a_k-T_{k,m-1})-(a_{k+1}+(1-2s_m)T_{k+1,m-1})+d_m, 
\end{equation} 
and define 
\begin{eqnarray*}
q^e_{k+1,m,\ell} &=& \phi_{k+1,m}(q_{m,\ell}), \\ 
q^w_{k,m,\ell} &=& \phi_{k+1,m}(q_{m,\ell})+ (t_{k,m},0), \quad 1 \leq \ell \leq M_m. 
\end{eqnarray*}

The squares $q^e_{k+1,m,\ell},q^w_{k,m,\ell}$ lie ``between'' $Q_{k+1}$ and $Q_k$, and we can choose $M_m$ large enough so that 
\begin{eqnarray}
\nonumber q^e_{m+1,m,\ell} &\subset& \operatorname{int}(Q_{0,m}), \\ 
\label{jalka1} q^e_{k+1,m,\ell} &\subset& \operatorname{int}(Q_{k+1,m-1}) \setminus  Q_{k+1,m} \quad \text{for all } 1 \leq k \leq m-1, \\  
\label{jalka2} q^w_{k,m,\ell} &\subset& \operatorname{int}(Q_{k,m-1}) \setminus Q_{k,m} \quad \text{for all } 1 \leq k \leq m. 
\end{eqnarray}

We define $D$ by 
$$
\hatc \setminus D =\{0\} \cup \Big( \bigcup_{k=1}^\infty  Q_k  \cup \Big(\bigcup_{m=1}^\infty \bigcup_{\ell=1}^{M_m}\bigcup_{k=1}^{m} (q^e_{k+1,m,\ell} \cup q^w_{k,m,\ell}) \Big) \Big). 
$$

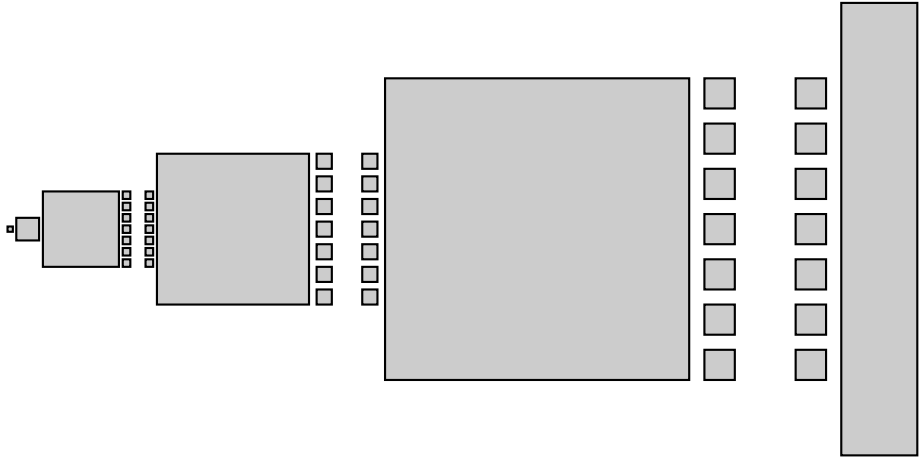
\begin{figure} \label{ekakuva}
\begin{tikzpicture} 
%\filldraw[black] (0,0) circle (1pt);
\filldraw[color=black, fill=black!20, thick] (.035,-.035) rectangle (.105,.035);
\filldraw[color=black, fill=black!20, thick] (.15,-.15) rectangle (.45,.15);
\filldraw[color=black, fill=black!20, thick] (.5,-.5) rectangle (1.5,.5); 

\filldraw[color=black, fill=black!20, thick] (1.55,-.5) rectangle (1.65,.-.4);
\filldraw[color=black, fill=black!20, thick] (1.85,-.5) rectangle (1.95,.-.4);
\filldraw[color=black, fill=black!20, thick] (1.55,-.35) rectangle (1.65,.-.25);
\filldraw[color=black, fill=black!20, thick] (1.85,-.35) rectangle (1.95,.-.25);
\filldraw[color=black, fill=black!20, thick] (1.55,-.2) rectangle (1.65,.-.1);
\filldraw[color=black, fill=black!20, thick] (1.85,-.2) rectangle (1.95,.-.1);
\filldraw[color=black, fill=black!20, thick] (1.55,-.05) rectangle (1.65,.05);
\filldraw[color=black, fill=black!20, thick] (1.85,-.05) rectangle (1.95,.05);
\filldraw[color=black, fill=black!20, thick] (1.55,.1) rectangle (1.65,.2);
\filldraw[color=black, fill=black!20, thick] (1.85,.1) rectangle (1.95,.2);
\filldraw[color=black, fill=black!20, thick] (1.55,.25) rectangle (1.65,.35);
\filldraw[color=black, fill=black!20, thick] (1.85,.25) rectangle (1.95,.35);
\filldraw[color=black, fill=black!20, thick] (1.55,.4) rectangle (1.65,.5);
\filldraw[color=black, fill=black!20, thick] (1.85,.4) rectangle (1.95,.5);

\filldraw[color=black, fill=black!20, thick] (2,-1) rectangle (4,1);

\filldraw[color=black, fill=black!20, thick] (4.1,-1) rectangle (4.3,-.8);
\filldraw[color=black, fill=black!20, thick] (4.7,-1) rectangle (4.9,-.8);

\filldraw[color=black, fill=black!20, thick] (4.1,-.7) rectangle (4.3,-.5);
\filldraw[color=black, fill=black!20, thick] (4.7,-.7) rectangle (4.9,-.5);

\filldraw[color=black, fill=black!20, thick] (4.1,-.4) rectangle (4.3,-.2);
\filldraw[color=black, fill=black!20, thick] (4.7,-.4) rectangle (4.9,-.2);

\filldraw[color=black, fill=black!20, thick] (4.1,-.1) rectangle (4.3,.1);
\filldraw[color=black, fill=black!20, thick] (4.7,-.1) rectangle (4.9,.1)

;\filldraw[color=black, fill=black!20, thick] (4.1,.2) rectangle (4.3,.4);
\filldraw[color=black, fill=black!20, thick] (4.7,.2) rectangle (4.9,.4);

\filldraw[color=black, fill=black!20, thick] (4.1,.5) rectangle (4.3,.7);
\filldraw[color=black, fill=black!20, thick] (4.7,.5) rectangle (4.9,.7);

\filldraw[color=black, fill=black!20, thick] (4.1,.8) rectangle (4.3,1);
\filldraw[color=black, fill=black!20, thick] (4.7,.8) rectangle (4.9,1);

\filldraw[color=black, fill=black!20, thick] (5,-2) rectangle (9,2); 

\filldraw[color=black, fill=black!20, thick] (9.2,-2) rectangle (9.6,-1.6);
\filldraw[color=black, fill=black!20, thick] (10.4,-2) rectangle (10.8,-1.6);

\filldraw[color=black, fill=black!20, thick] (9.2,-1.4) rectangle (9.6,-1);
\filldraw[color=black, fill=black!20, thick] (10.4,-1.4) rectangle (10.8,-1);

\filldraw[color=black, fill=black!20, thick] (9.2,-.8) rectangle (9.6,-.4);
\filldraw[color=black, fill=black!20, thick] (10.4,-.8) rectangle (10.8,-.4);

\filldraw[color=black, fill=black!20, thick] (9.2,-.2) rectangle (9.6,.2);
\filldraw[color=black, fill=black!20, thick] (10.4,-.2) rectangle (10.8,.2);

\filldraw[color=black, fill=black!20, thick] (9.2,.4) rectangle (9.6,.8);
\filldraw[color=black, fill=black!20, thick] (10.4,.4) rectangle (10.8,.8);

\filldraw[color=black, fill=black!20, thick] (9.2,1) rectangle (9.6,1.4);
\filldraw[color=black, fill=black!20, thick] (10.4,1) rectangle (10.8,1.4);

\filldraw[color=black, fill=black!20, thick] (9.2,1.6) rectangle (9.6,2);
\filldraw[color=black, fill=black!20, thick] (10.4,1.6) rectangle (10.8,2);

\filldraw[color=black, fill=black!20, thick] (11,-3) rectangle (12,3); 
\end{tikzpicture}
\caption{Part of the complement of $D$} 
\end{figure}
%%%%%%%%%%%%%%%%%%%%%%%%%%%%%%%%%%%%%%%%%%%%%%%%%%%%%%%%%%%%%%%%%%%%%%%%%%%%%%%%%%%%%%%%%%%%%%%%%%%%%%%%%%%%%%%%%

\subsection{Construction of the exhaustion}
We define exhaustion $\Phi_0=(D_j)$ of $D$. First, every $\mathcal{C}(D_j)$ includes 
$$
Q_{0,j} \quad \text{and} \quad Q_{k,j}, \quad 1 \leq k \leq j. 
$$

To describe the rest of the elements of $\mathcal{C}(D_j)$, we first define 
\begin{eqnarray}
\label{cure1} q^e_{k+1,m,\ell,j}  &=& (1+\epsilon(j))q^e_{k+1,m,\ell}  \quad 1 \leq k \leq j-1, \\
\label{cure2} q^w_{k,m,\ell,j}     &=& (1+\epsilon(j))q^w_{k,m,\ell}  \quad 1 \leq k \leq j  
\end{eqnarray} 
for all $k \leq m \leq j$ and $1 \leq \ell \leq M_m$, 
i.e., squares with same center and $(1+\epsilon(j))$ times the sidelength of $q^e_{k+1,m,\ell}$ and $q^w_{k,m,\ell}$, respectively. Here $(\epsilon(j))$ is a strictly decreasing sequence 
converging to zero, and $\epsilon(1)$ is small enough such that for any fixed $j \in \mathbb{N}$ we have 
\begin{itemize} 
\item[(i)] none of the squares intersect each other, and  
\item[(ii)] \eqref{jalka1} holds for $q^e_{k+1,m,\ell,j}$ and \eqref{jalka2} holds for $q^w_{k,m,\ell,j}$. 
\end{itemize}
We let $\mathcal{C}(D_j)$ include all the squares in \eqref{cure1} and \eqref{cure2} for $k \leq m \leq j-1$ and $1 \leq \ell \leq M_m$. Notice that 
the squares for which $m=j$ are not included.  

The remaining elements of $\mathcal{C}(D_j)$ will be components $\overline{U}_{k,j,\ell}$ which ``surround" $Q_{k,j}$ and contain both $q^e_{k,j,\ell,j}$ and $q^w_{k,j,\ell,j}$. More precisely, fix 
$2 \leq k \leq j$, and let $U_{k,j,\ell}$, $1 \leq \ell \leq M_j$, be Jordan domains so that 
\begin{itemize} 
\item[(i)] the sets  $\overline{U}_{k,j,\ell}$ are pairwise disjoint, 
\item[(ii)] $\overline{U}_{k,j,\ell} \subset \operatorname{int}(Q_{k,j-1})\setminus Q_{k,j}$, 
\item[(iii)] $\overline{U}_{k,j,\ell}$ contains both $q^e_{k,j,\ell,j}$ and $q^w_{k,j,\ell,j}$, 
\item[(iv)] if $(x,y) \in \partial U_{k,j,\ell}$ has the largest $x$-coordinate among all points of $\partial U_{k,j,\ell}$, then $(x,y) \in q^e_{k,j,\ell,j}$, 
\item[(v)] if $(x,y) \in \partial U_{k,j,\ell}$ has the smallest $x$-coordinate among all points of $\partial U_{k,j,\ell}$, then $(x,y) \in q^w_{k,j,\ell,j}$. 
\end{itemize}

We conclude the definition of $\mathcal{C}(D_j)$ by including $\overline{U}_{1,j,\ell}:=q^w_{1,j,\ell,j}$ and 
$$
\overline{U}_{k,j,\ell} \quad 2 \leq k \leq j, \, 1 \leq \ell \leq M_j. 
$$
Then $D_j$ is the set for which $\hatc \setminus D_j =  \cup \{p \in \mathcal{C}(D_j)\}$, and $\Phi_0=(D_j)$.

\begin{figure} \label{tokakuva}
\begin{tikzpicture} 
\filldraw[black] (0,0) circle (1pt);
%\filldraw[color=black, fill=black!20, thick] (.035,-.035) rectangle (.105,.035);
%\filldraw[color=black, fill=black!20, thick] (.15,-.15) rectangle (.45,.15);
%\filldraw[color=black, fill=black!20, thick] (.5,-.5) rectangle (1.5,.5); 

%\filldraw[color=black, fill=black!20, thick] (1.55,-.5) rectangle (1.65,.-.4);
%\filldraw[color=black, fill=black!20, thick] (1.85,-.5) rectangle (1.95,.-.4);
%\filldraw[color=black, fill=black!20, thick] (1.55,-.35) rectangle (1.65,.-.25);
%\filldraw[color=black, fill=black!20, thick] (1.85,-.35) rectangle (1.95,.-.25);
%\filldraw[color=black, fill=black!20, thick] (1.55,-.2) rectangle (1.65,.-.1);
%\filldraw[color=black, fill=black!20, thick] (1.85,-.2) rectangle (1.95,.-.1);
%\filldraw[color=black, fill=black!20, thick] (1.55,-.05) rectangle (1.65,.05);
%\filldraw[color=black, fill=black!20, thick] (1.85,-.05) rectangle (1.95,.05);
%\filldraw[color=black, fill=black!20, thick] (1.55,.1) rectangle (1.65,.2);
%\filldraw[color=black, fill=black!20, thick] (1.85,.1) rectangle (1.95,.2);
%\filldraw[color=black, fill=black!20, thick] (1.55,.25) rectangle (1.65,.35);
%\filldraw[color=black, fill=black!20, thick] (1.85,.25) rectangle (1.95,.35);
%\filldraw[color=black, fill=black!20, thick] (1.55,.4) rectangle (1.65,.5);
%\filldraw[color=black, fill=black!20, thick] (1.85,.4) rectangle (1.95,.5);
\filldraw[color=blue, fill=blue!20] (1.85,.4) rectangle (1.95,.5); 

%\filldraw[color=black, fill=black!20, thick] (2,-1) rectangle (4,1);
\filldraw[color=blue, fill=blue!20]  (1.99,-1.01) rectangle (4.01,1.01);

%\filldraw[color=black, fill=black!20, thick] (4.1,-1) rectangle (4.3,-.8);
%\filldraw[color=black, fill=black!20, thick] (4.7,-1) rectangle (4.9,-.8);

%\filldraw[color=black, fill=black!20, thick] (4.1,-.7) rectangle (4.3,-.5);
%\filldraw[color=black, fill=black!20, thick] (4.7,-.7) rectangle (4.9,-.5);

%\filldraw[color=black, fill=black!20, thick] (4.1,-.4) rectangle (4.3,-.2);
%\filldraw[color=black, fill=black!20, thick] (4.7,-.4) rectangle (4.9,-.2);

%\filldraw[color=black, fill=black!20, thick] (4.1,-.1) rectangle (4.3,.1);
%\filldraw[color=black, fill=black!20, thick] (4.7,-.1) rectangle (4.9,.1); 

%\filldraw[color=black, fill=black!20, thick] (4.1,.2) rectangle (4.3,.4);
%\filldraw[color=black, fill=black!20, thick] (4.7,.2) rectangle (4.9,.4);

%\filldraw[color=black, fill=black!20, thick] (4.1,.5) rectangle (4.3,.7);
%\filldraw[color=black, fill=black!20, thick] (4.7,.5) rectangle (4.9,.7);

%\filldraw[color=black, fill=black!20, thick] (4.1,.8) rectangle (4.3,1);
\filldraw[color=blue, fill=blue!20]  (4.08,.78) rectangle (4.32,1.02); 

%\filldraw[color=black, fill=black!20, thick] (4.7,.8) rectangle (4.9,1);
\filldraw[color=blue, fill=blue!20] (4.68,.78) rectangle (4.92,1.02); 

%\filldraw[color=black, fill=black!20, thick] (5,-2) rectangle (9,2); 
\filldraw[color=blue, fill=blue!20]  (4.98,-2.02) rectangle (9.02,2.02); 

%\filldraw[color=black, fill=black!20, thick] (9.2,-2) rectangle (9.6,-1.6);
%\filldraw[color=black, fill=black!20, thick] (10.4,-2) rectangle (10.8,-1.6);

%\filldraw[color=black, fill=black!20, thick] (9.2,-1.4) rectangle (9.6,-1);
%\filldraw[color=black, fill=black!20, thick] (10.4,-1.4) rectangle (10.8,-1);

%\filldraw[color=black, fill=black!20, thick] (9.2,-.8) rectangle (9.6,-.4);
%\filldraw[color=black, fill=black!20, thick] (10.4,-.8) rectangle (10.8,-.4);

%\filldraw[color=black, fill=black!20, thick] (9.2,-.2) rectangle (9.6,.2);
%\filldraw[color=black, fill=black!20, thick] (10.4,-.2) rectangle (10.8,.2);

%\filldraw[color=black, fill=black!20, thick] (9.2,.4) rectangle (9.6,.8);
%\filldraw[color=black, fill=black!20, thick] (10.4,.4) rectangle (10.8,.8);

%\filldraw[color=black, fill=black!20, thick] (9.2,1) rectangle (9.6,1.4);
%\filldraw[color=black, fill=black!20, thick] (10.4,1) rectangle (10.8,1.4);

%\filldraw[color=black, fill=black!20, thick] (9.2,1.6) rectangle (9.6,2);
\filldraw[color=blue, fill=blue!20]  (9.18,1.58) rectangle (9.62,2.02);
\draw[blue, very thick] (9.4,2.05) -- (9.4,2.3) -- (4.8,2.3) -- (4.8,1.02);
\draw[blue, very thick] (4.2,1.02) -- (4.2, 1.25) -- (1.9,1.25) -- (1.9, 0.5); 

%\filldraw[color=black, fill=black!20, thick] (10.4,1.6) rectangle (10.8,2);
\filldraw[color=blue, fill=blue!20]  (10.38,1.58) rectangle (10.82,2.02);
\draw[blue, very thick] (10.6,2.02) -- (10.6,3.4) -- (12,3.4); 

\filldraw[color=blue, fill=blue!20]  (10.38,.98) rectangle (10.82,1.42); 
\draw[blue, very thick] (10.82,1.2) -- (10.91,1.2) -- (10.91,3.2) -- (12,3.2); 

%\filldraw[color=black, fill=black!20, thick] (11,-3) rectangle (12,3); 
\filldraw[color=blue, fill=blue!20] (10.98,-3.02) rectangle (12.02,3.02);

\filldraw[color=blue, fill=blue!20]  (-1.7,-1.7) rectangle (1.7,1.7); 

\end{tikzpicture}
\caption{Some of the sets $\overline{U}_{k,j,\ell}$} 
\end{figure}

\begin{proposition} 
\label{juba1}
There is $\delta >0$ such that 
$$ 
\modu(Q_{0,j_0},Q_{1,j_0};D_j) \geq \delta \quad \text{for all } j_0 \in \mathbb{N} \text{ and } j \geq j_0. 
$$ 
\end{proposition}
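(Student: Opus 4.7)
The plan is to bound $\modu(Q_{0,j_0},Q_{1,j_0};D_j)$ from below via a duality argument applied to the family $\Gamma^*$ of Jordan curves in $\hat{D}_j$ separating $\pi_{D_j}(Q_{0,j_0})$ from $\pi_{D_j}(Q_{1,j_0})$, coupled with a fixed geometric separator that avoids every element of $\mathcal{C}(D_j)$.

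First I will produce a closed curve in $\hat{D}_j$, independent of $j_0$ and $j\geq j_0$, that separates $Q_{0,j_0}$ from $Q_{1,j_0}$ in $\hatc$. Set $x_0 = a_1 - R_1 - D_1/2$, the midpoint of the horizontal gap between $Q_1$ and $Q_2$, and let $V_{x_0} = \{x_0\}\times\R$. Since $\infty \in D \subset D_j$, the completion $V_{x_0}\cup\{\infty\}$ is a Jordan curve in $\hatc$. By inspecting each type of element of $\mathcal{C}(D_j)$ listed after \eqref{cure2}, I will verify $V_{x_0}\cap p = \emptyset$ for every $p \in \mathcal{C}(D_j)$ when $j$ is large: the inflations $Q_{k,j}\subset\{|x-a_k|\leq R_k+2^{-j-1}D_k\}$ lie at Euclidean distance at least about $R_2/4$ from $V_{x_0}$; the small squares $q^e_{k+1,m,\ell,j}$ and $q^w_{k,m,\ell,j}$ inhabit the thin annular shells $\operatorname{int}(Q_{k+1,m-1})\setminus Q_{k+1,m}$ and $\operatorname{int}(Q_{k,m-1})\setminus Q_{k,m}$, which sit in small neighborhoods of the $Q_k$; and the bridges $\overline{U}_{k,j,\ell}\subset\operatorname{int}(Q_{k,j-1})\setminus Q_{k,j}$ are again trapped near $Q_k$. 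The finitely many exceptional small-$j$ cases are handled by a slight perturbation of $x_0$.

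Next I will bound $\modu(\Gamma^*)$ from above. By planar topology every $\gamma^* \in \Gamma^*$ must cross $V_{x_0}$. Working in the spherical metric (which yields the same transboundary modulus by conformal invariance, Lemma \ref{modinvariance}), let $\sigma > 0$ be the spherical distance from $V_{x_0}$ to $\bigcup_{p\in\mathcal{C}(D_j)} p$; by Step~1 this is bounded below by an absolute constant. Let $N$ denote the spherical $\sigma$-tube around $V_{x_0}$, and define $\rho^* = (2\sigma)^{-1}\chi_N$ on $D_j$, $\rho^*(p)=0$ for every $p \in \mathcal{C}(D_j)$. Since $N$ avoids all complementary components and every $\gamma^*\in\Gamma^*$ traverses $N$ transversally, $\rho^*$ is admissible for $\Gamma^*$; its spherical energy is comparable to $(\text{spherical length of }V_{x_0})/\sigma$, hence at most an absolute constant $C_0$. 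Therefore $\modu(\Gamma^*)\leq C_0$.

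Finally I will invoke the transboundary analogue of the Ahlfors-Beurling duality $\modu(\Gamma)\cdot\modu(\Gamma^*)\geq 1$: for admissible $\rho$ for $\Gamma$ and $\rho^*$ for $\Gamma^*$, a Cauchy-Schwarz pairing---where the crossing of every $\gamma\in\Gamma$ with every $\gamma^*\in\Gamma^*$ yields $\int_{D_j} \rho\rho^*\,dA + \sum_p \rho(p)\rho^*(p)\geq 1$---gives $E(\rho)\,E(\rho^*)\geq 1$. Combined with the previous step this produces $\modu(\Gamma)\geq 1/C_0 =: \delta$, uniform in $j_0$ and $j \geq j_0$. The principal obstacle I anticipate is justifying this duality in the transboundary setting: the classical pairing argument is routine, but the hybrid admissibility condition (a path integral plus a discrete sum over complementary components) demands either a citation to Schramm's framework \cite{Sch95} or a direct derivation from the definitions given in this paper. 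The bookkeeping in Step~1 is also delicate, but it is guaranteed to close because the construction of $\Phi_0$ concentrates every new complementary component in a thin shell around an existing $Q_k$, hence well away from the gap between $Q_1$ and $Q_2$.
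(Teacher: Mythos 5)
Your argument has two genuine gaps, either of which is fatal as written. First, the function $\rho^*$ in Step~2 is not admissible for $\Gamma^*$. The family of Jordan curves in $\hat{D}_j$ separating $\pi_{D_j}(Q_{0,j_0})$ from $\pi_{D_j}(Q_{1,j_0})$ contains curves that never come near the vertical line: for instance, for $j_0\geq 2$ a loop tightly encircling $Q_{0,j_0}$ separates the two sets, and since $\sigma\leq \dist(V_{x_0},Q_{1,j})<D_1/2<\dist(V_{x_0},Q_{0,j_0})$, such a loop avoids the tube $N$ entirely; it may freely pass through complementary components (all weighted $\rho^*=0$), so its total $\rho^*$-mass is $0$. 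The same problem occurs with loops encircling $Q_{1,j_0}$ inside the shell $Q_{1,j_0-1}\setminus Q_{1,j_0}$, which is densely populated with components that can be traversed at zero cost. Repairing this means bounding the transboundary modulus of separating curves running through those shells, which is essentially the original difficulty, not a shortcut around it. Second, the duality $\modu(\Gamma)\cdot\modu(\Gamma^*)\geq 1$ for \emph{transboundary} modulus is not available: it is not proved in this paper, it is not a citable statement in \cite{Sch95} (which gives conformal invariance, Lemma \ref{modinvariance}, and specific estimates, not a connecting/separating duality), and the ``every $\gamma$ crosses every $\gamma^*$ plus Cauchy--Schwarz'' sketch is not a proof. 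A crossing may take place inside a complementary component $p$, where it contributes to the pairing only through $\rho(p)\rho^*(p)$; with your choice $\rho^*\equiv 0$ on all components such crossings contribute nothing, so the claimed inequality $\int_{D_j}\rho\rho^*\,dA+\sum_p\rho(p)\rho^*(p)\geq 1$ does not follow from crossing alone. Even classically this pairing inequality requires a level-set/coarea argument, and in the transboundary setting the relevant ``level sets'' jump across complementary components, so the discrete term must be accounted for by a genuinely new argument that the proposal does not supply.

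By contrast, the paper proves the proposition directly: it builds the explicit family of curves $\gamma_\tau$, $\tau\in F_j$, out of horizontal segments together with the bridge components $\overline{U}_{k,j,\ell}$, integrates the admissibility inequality over $\tau$, and uses Fubini, H\"older, \eqref{kulta}, $d_jM_j\leq 1$ and \eqref{crazy} to force every admissible $\rho$ to have energy at least $1/36$. Note that your argument never uses the bridges or the largeness of $M_m$, which are precisely the features of the construction that make the connecting family rich enough; an argument insensitive to them should be treated with suspicion, since removing the bridges (a refinement of the type the paper later exploits) leaves Steps~1--2 of your proposal unchanged while the mechanism behind the lower bound disappears.
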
 

We postpone the proof of Proposition \ref{juba1} and first show how it implies Theorem \ref{mainexample1}. 
Choose any subsequence $\Phi=(D_{j_n})$ of $(D_j)$ so that $(f_{j_n})$ converges to $f_\Phi$. By 
Proposition \ref{terro} it suffices to show that 
$\hat{f}_\Phi(\{0\})$ is non-trivial. But this follows directly by combining Proposition \ref{juba1} with Proposition \ref{combi} below. 
Here the latter proposition can be applied with $E=Q_{1,1}$ since every $j_0 \geq 1$ satisfies 
$$ 
\modu(Q_{0,j_0},Q_{1,j_0};D_j) \leq \modu(Q_{0,j_0},Q_{1,1};D_j). 
$$
Thus, Theorem \ref{mainexample1} follows once we have proved these propositions.

%%%%%%%%%%%%%%%%%%%%%%%%%%%%%%%%%%%%%%%%%%%%%%%%%%%%%%%%%%%%%%%%%%%%%%%%%%%%%%%%%%%%%%%%%%%%%%%%%%%%%%%%%%%%%%%%%

\subsection{Proof of Proposition \ref{juba1}} 
Fix $j_0$ and $j \geq j_0$, and let $F_j$ be the projection of $\cup_{\ell=1}^{M_j} q_{j,\ell}$ to the real axis, recall the definition in \eqref{juou}. 
We construct a family of paths $\Gamma$ parametrized by $F_j$, 
so that each $\gamma \in \Gamma$ connects $\pi_{D_j}(Q_{1,j_0})$ and $\pi_{D_j}(Q_{0,j_0})$ in $\hat{D}_j$. We then give a lower bound for $\modu(\Gamma)$. 

Fix $\tau \in F_j$, and denote 
\begin{eqnarray*}
z^e_{k+1}(\tau) &=& \phi_{k+1,j}((\tau,d_j/2)), \quad  1 \leq k \leq j-1,  \\ 
z^w_{k}(\tau) &=& \phi_{k+1,j}((\tau,d_j/2))+(t_{k,j},0),  \quad 1 \leq k \leq j, 
\end{eqnarray*}
where $t_{k,j}$ is the number in \eqref{kahwa} and $\phi_{k+1,j}$ the M\"obius transformation defined before \eqref{kahwa}. Then 
\begin{eqnarray*}
z^e_{k+1}(\tau) \in q^e_{k+1,j,\ell,j} \subset \overline{U}_{k+1,j,\ell}  \quad \text{and} \quad 
z^w_{k}(\tau) \in q^w_{k,j,\ell,j} \subset \overline{U}_{k,j,\ell}, 
\end{eqnarray*}
where $\ell=\ell(j,\tau)$ is the index for which $(\tau,0) \in q_{j,\ell}$. 

We denote 
$$
\overline{U}_{k,j,\ell}=:\overline{U}_k(\tau), 
$$ 
and let $I_k(\tau)$ be the horizontal line segment in $\mathbb{C}$ which connects  
$Q_{1,j_0}$ to $z^e_{2}(\tau)$, $z^w_k(\tau)$ to $z^e_{k+1}(\tau)$ if $2 \leq k \leq j-1$, and $z^w_j(\tau)$ to $Q_{0,j}$. 
%Each $z^e_{k+1}(\tau)$ lies in some square $q^e_{k+1,j,\ell,j}=:q^e_{k+1}(\tau)$, and each 
%$z^w_{k}(\tau)$ lies in some square $q^w_{k,j,\ell,j}=:q^w_{k}(\tau)$. In particular, for each $2 \leq k \leq j$ both 
%$z^e_{k+1}(\tau)$ and $z^w_{k}(\tau)$ belong to some $\overline{U}_{k,j,\ell,j}=:\overline{U}_k(\tau) \in \mathcal{C}(D_j)$. 
Then  
$$
J(\tau)=( \cup_{k=1}^j I_k(\tau)) \cup ( \cup_{k=2}^j \overline{U}_k(\tau) )  
$$
is a continuum connecting $Q_{1,j_0}$ and $Q_{0,j_0}$ in $\mathbb{C}$. Moreover, $\pi_{D_j}(J(\tau))$ is a rectifiable curve in $\hat{D}_j$, with 
arc-length parametrization $\gamma_{\tau}$. We define 
$$
\Gamma = \{\gamma_{\tau} : \, \tau \in F_j\}. 
$$ 

We now estimate the modulus of $\Gamma$. Let $\rho \in X(\Gamma)$ be an admissible function and $\tau \in F_j$. 
We denote by $\mathcal{A}_j$ all the sets in $\mathcal{C}(D_j)$ of the form $\overline{U}_{k,j,\ell}$, and 
by $\mathcal{B}_j$ all the other squares in $\mathcal{C}(D_j)$ of the form \eqref{cure1} or \eqref{cure2}. Then 
\begin{eqnarray} 
\label{kiima} 
1 &\leq& \int_{\gamma_{\tau}} \rho \, ds + \sum_{q \in \mathcal{C}(D_j)\cap |\gamma_\tau|} \rho(q)  \\ \nonumber 
&=& \sum_{k=1}^j \int_{I_k(\tau) \cap D_j} \rho \, ds + \sum_{k=1}^j \rho(\overline{U}_k(\tau)) + 
\sum_{q\in \mathcal{B}_j \cap |\gamma_\tau|} \rho(q). 
\end{eqnarray} 

Given $1 \leq k \leq j$, let $A_k$ be the smallest rectangle containing all the segments $I_k(\tau)\cap D_j$, $\tau \in F_j$. Then by \eqref{kulta}, 
\begin{equation} 
\label{sulta}
\operatorname{Area}(A_k) \leq 2D_{k}R_{k+1} \leq 2^{1-k}R_{k+1}^2. 
\end{equation} 

To estimate the modulus, we integrate both sides of \eqref{kiima} over $\tau$ and apply change of variables and Fubini's theorem to get 
\begin{eqnarray} 
\label{vihu}
\ell(F_j) &\leq& \sum_{k=1}^j (2R_{k+1})^{-1}\int_{A_k \cap D_j} \rho \, dA+ \int_{F_j} \sum_{k=1}^j \rho(\overline{U}_k(\tau)) \, d\tau  \\
\nonumber &+& \int_{F_j }\sum_{q\in \mathcal{B}_j \cap |\gamma_\tau|} \rho(q) \, d\tau=S_1+S_2+S_3. 
\end{eqnarray} 

We apply H\"older's inequality and \eqref{sulta} to estimate $S_1$ as follows: 
\begin{eqnarray} 
\label{jytaa} S_1 &\leq&   \sum_{k=1}^j (2R_{k+1})^{-1}\operatorname{Area}(A_k)^{1/2} \Big( \int_{A_k \cap D_j} \rho^2 \, dA \Big)^{1/2}  \\
\nonumber &\leq& \Big( \sum_{k=1}^j 2^{-1-k} \Big)^{1/2}  \Big( \int_{D_j} \rho^2 \, dA  \Big)^{1/2} \leq  \Big( \int_{D_j} \rho^2 \, dA  \Big)^{1/2}.  
\end{eqnarray}

To estimate $S_2$ and $S_3$, we choose $M_m$ so that 
\begin{equation} 
\label{crazy} 
M_m \geq m 2^{m+1}   \quad \text{for all } m \in \mathbb{N}. 
\end{equation}  
We notice that the length of the set of parameters $\tau$ for which a given $\overline{U}_{k,j,\ell} \in \mathcal{A}_j$ is $\overline{U}_k(\tau)$ equals $d_j$. 
We have $d_j M_j \leq 1$ by construction. Thus, H\"older's inequality and \eqref{crazy} yield 
\begin{eqnarray}
\nonumber S_2 &=& d_j \sum_{k=1}^j \sum_{\ell=1}^{M_j} \rho(\overline{U}_{k,j,\ell})
\leq  d_j(jM_j)^{1/2}  \Big( \sum_{k=1}^j \sum_{\ell=1}^{M_j}  \rho(\overline{U}_{k,j,\ell,j})^2 \Big)^{1/2} \\
\label{jytaa2} &\leq& \Big( \sum_{\overline{U} \in \mathcal{A}_j}  \rho(\overline{U})^2 \Big)^{1/2}. 
\end{eqnarray}

Next, we notice that the length of the set of parameters $\tau$ for which a given $q=q^y_{k,m,\ell,j} \in \mathcal{B}_j \cap |\gamma_\tau|$ 
is at most $M_m^{-1}$. Here $y=e$ or $w$. As before, H\"older's inequality yields 
\begin{eqnarray} 
\label{jytaa3} S_3 \leq \sum_{q \in \mathcal{B}_j} M_m^{-1} \rho(q) \leq \Big(\sum_{q \in \mathcal{B}_j} M_m^{-2} \Big)^{1/2}
\Big(\sum_{q \in \mathcal{B}_j} \rho(q)^2 \Big)^{1/2}.  
\end{eqnarray} 
We estimate the first sum from above by summing over all $q \in \mathcal{B}_j$ and applying \eqref{crazy} to have 
\begin{equation}
\label{api}
\sum_{q \in \mathcal{B}_j} M_m^{-2} \leq 2 \sum_{m=1}^{j} \sum_{k=1}^{m}\sum_{\ell=1}^{M_m} M_m^{-2} = 2 \sum_{m=1}^{j} mM_m^{-1} 
\leq \sum_{m=1}^{\infty} 2^{-m} = 1. 
\end{equation}

We have $\ell(F_j) \geq \frac{1}{2}$ by construction. Combining with \eqref{vihu}, \eqref{jytaa}, \eqref{jytaa2}, \eqref{jytaa3}, and \eqref{api}, 
yields 
\begin{eqnarray}
\nonumber \frac{1}{2}&\leq& \Big(\int_{D_j} \rho^2 \, dA \Big)^{1/2} + \Big(\sum_{\overline{U} \in \mathcal{A}_j}  \rho(\overline{U})^2\Big)^{1/2}
+\Big(\sum_{q \in \mathcal{B}_j}  \rho(q)^2\Big)^{1/2}\\ 
\label{saksi} &\leq& 3 \Big(\int_{D_j} \rho^2 \, dA + \sum_{q \in \mathcal{C}(D_j)}  \rho(q)^2\Big)^{1/2}. 
\end{eqnarray} 
Since \eqref{saksi} holds for all $\rho \in X(\Gamma)$, we conclude that 
$$
\modu(Q_{0,j_0},Q_{1,j_0};D_j) \geq \modu(\Gamma) \geq \frac{1}{36}. 
$$
The proof is complete. 

%%%%%%%%%%%%%%%%%%%%%%%%%%%%%%%%%%%%%%%%%%%%%%%%%%%%%%%%%%%%%%%%%%%%%%%%%%%%%%%%%%%%%%%%%%%%%%%%%%%%%%%%%%%%%%%%%

\section{Proof of Theorem \ref{mainexample2} }

\subsection{Construction of the domain}
The set $\mathcal{C}(D)$ of complementary components of $D$, which we now describe, consists of countably many segments and a Cantor set.\footnote{The size of the Cantor set is not relevant for our construction. For instance, the construction can be carried out so that $\hatc \setminus D$ has $\sigma$-finite length. } 
Let $\mathcal{W}_0=\{e\}$, $\mathcal{Y}_0=\{(e,e)\}$, and for $k=1,2,\ldots$, let 
\begin{eqnarray*}
\mathcal{W}_k &=& \{w=w_1w_2w_3 \cdots w_k: \, w_\ell \in \{0,1\} \, \text{for } 1 \leq \ell \leq k \},  \\ 
\mathcal{W}_\infty &=& \{\bar{w}=w_1w_2w_3 \cdots : \, w_\ell \in \{0,1\} \, \text{ for }\ell=1,2,\ldots\}, \quad \text{and }  \\
\mathcal{Y}_k &=& \{(w,v): \, w \in \mathcal{W}_k,\, v=v_1v_2v_3\cdots v_k, \, v_\ell \in \{0,1,2,3\}\, \text{for } 1 \leq \ell \leq k \}. 
\end{eqnarray*}
If $\bar{w}=w_1w_2\cdots \in \mathcal{W}_\infty$ and $k \in \mathbb{N}$, we denote $\bar{w}(k)=w_1\cdots w_k$. 

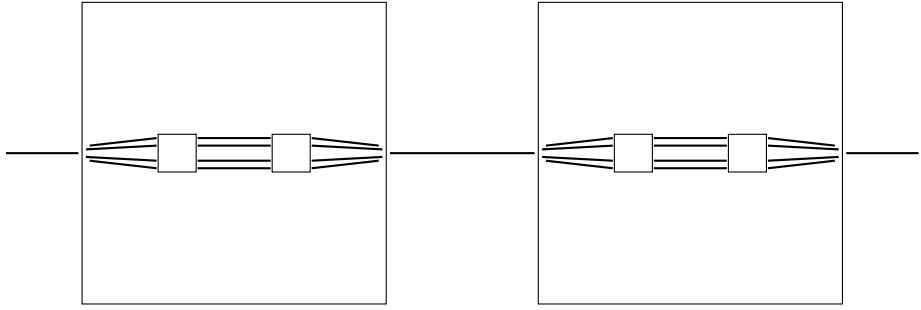
\begin{figure}
\begin{tikzpicture}
\draw (-5,-2) rectangle (-1,2);
\draw (1,-2) rectangle (5,2);
\draw[black, thick] (-6,0) -- (-5.05,0);
\draw[black, thick] (-.95,0) -- (.95,0); 
\draw[black, thick] (5.05,0) -- (6,0);  

\draw (-4,-.25) rectangle (-3.5,.25); 
\draw (-2.5,-.25) rectangle (-2,.25); 
\draw (2,-.25) rectangle (2.5,.25); 
\draw (3.5,-.25) rectangle (4,.25); 

\draw[black, thick] (-4.9,.1) -- (-4.02,.2);
\draw[black, thick] (-4.95,.05) -- (-4.02,.1); 
\draw[black, thick] (-4.95,-.05) -- (-4.02,-.1);  
\draw[black, thick] (-4.9,-.1) -- (-4.02,-.2);

\draw[black, thick] (4.9,.1) -- (4.02,.2);
\draw[black, thick] (4.95,.05) -- (4.02,.1); 
\draw[black, thick] (4.95,-.05) -- (4.02,-.1);  
\draw[black, thick] (4.9,-.1) -- (4.02,-.2);

\draw[black, thick] (-3.48,.2) -- (-2.52,.2);
\draw[black, thick] (-3.48,.1) -- (-2.52,.1); 
\draw[black, thick] (-3.48,-.1) -- (-2.52,-.1);  
\draw[black, thick] (-3.48,-.2) -- (-2.52,-.2);

\draw[black, thick] (3.48,.2) -- (2.52,.2);
\draw[black, thick] (3.48,.1) -- (2.52,.1); 
\draw[black, thick] (3.48,-.1) -- (2.52,-.1);  
\draw[black, thick] (3.48,-.2) -- (2.52,-.2);

\draw[black, thick] (4.9-6,.1) -- (4.02-6,.2);
\draw[black, thick] (4.95-6,.05) -- (4.02-6,.1); 
\draw[black, thick] (4.95-6,-.05) -- (4.02-6,-.1);  
\draw[black, thick] (4.9-6,-.1) -- (4.02-6,-.2);

\draw[black, thick] (-4.9+6,.1) -- (-4.02+6,.2);
\draw[black, thick] (-4.95+6,.05) -- (-4.02+6,.1); 
\draw[black, thick] (-4.95+6,-.05) -- (-4.02+6,-.1);  
\draw[black, thick] (-4.9+6,-.1) -- (-4.02+6,-.2);

\end{tikzpicture} 
\caption{First steps in the construction of $D$}
\end{figure}

Next, let $(R_k)$ be a sequence of positive real numbers so that $R_{k+1} < R_{k}/2$ for all $k=0,1,2, \ldots$. Moreover, 
given such a $k$ let  
\begin{equation} 
\label{sma}
\mathcal{Q}_k=\{Q_w=[x_w-R_k,x_w + R_k] \times [-R_k,R_k]: \, w \in \mathcal{W}_k\} 
\end{equation} 
be a family of disjoint, congruent closed squares in $\mathbb{C}$ with centers on the real axis so that
$$
\text{if } w \in \mathcal{W}_k \text{ and } a \in \{0,1\}, \text{ then } Q_{wa} \subset \operatorname{int}(Q_w).  
$$ 
The intersection 
\begin{equation} 
\label{oona}
K = \bigcap_{k=0}^\infty \Big( \bigcup_{w \in \mathcal{W}_k} Q_w \Big)
\end{equation} 
is a Cantor set on the real axis. It is the Cantor set part of $\mathcal{C}(D)$. Each $p=p_{\bar{w}} \in K$ is uniquely determined by the $\bar w \in \mathcal{W}_\infty$ that satisfies 
$$
\{p_{\bar w} \}= \cap_{k=1}^\infty Q_{\bar w(k)}. 
$$

We now inductively define the segments in $\mathcal{C}(D)$. The definition involves a sequence $(\epsilon_k)$ of positive real numbers converging rapidly to zero. 
We initially require that $\epsilon_k < R_{k-2}/10$. The segments are of the form 
$$
I_m(w,v)=[a_m(w,v),b_m(w,v)], \quad m=1,2,3, 
$$
where $a_m(w,v),b_m(w,v) \in \mathbb{C}$ and $(w,v) \in \mathcal{Y}_k$ for some $k=0,1,2,\ldots$. 
We denote by $\pi_1: \mathbb{C} \to \R$ the projection to the real axis. 

We first choose  
\begin{equation}
\label{huima}
[a_1,b_1]=I_1=I_1(e,e), \, [a_2,b_2]=I_2=I_2(e,e), \, [a_3,b_3]=I_3=I_3(e,e)  
\end{equation} 
of length larger than $\epsilon_1$ in $ Q_e \setminus (Q_0 \cup Q_1)$, so that  
\begin{eqnarray*}
\pi_1(a_1) &<& \pi_1(b_1)<x_0-R_1 <  \pi_1(b_1)+\epsilon_2/10, \\ 
\pi_1(a_2)-\epsilon_2/10 &<& x_0+R_1 <\pi_1(a_2), \\ 
\pi_1(a_2) &<& \pi_1(b_2) < x_1-R_1 < \pi_1(b_2)+\epsilon_2/10, \\ 
\pi_1(a_3)-\epsilon_2/10 &<& x_1+R_1<\pi_1(a_3)<\pi_1(b_3). 
\end{eqnarray*} 
We can also require the segments to be horizontal, but this is not necessary and such a requirement cannot be made below when $k\geq 1$. 

Next fix $k \geq 1$ and assume that $I_m(w',v')$ and $\epsilon_\ell$ are defined for $(w',v') \in \mathcal{Y}_{\ell}$, $0 \leq \ell \leq k-1$, so that 
\begin{equation} 
\label{gidzang}
\text{if } B_1 \in \mathcal{B}_{\ell_1} \text{ and } B_2 \in \mathcal{B}_{\ell_2}, \,B_1 \neq B_2, \text{ then }   
\overline{B}_1 \cap \overline{B}_2 = \emptyset.  
\end{equation} 
Here 
\begin{equation}
\label{holysmo}
\mathcal{B}_\ell=\{B(z,\epsilon_\ell): \, z \text{ endpoint of } I_{m}(w,v), \, (w,v) \in \mathcal{Y}_{\ell}, \, m=1,2,3 \}. 
\end{equation}
Let 
$$
I_1(w,v), \, I_2(w,v), \, I_3(w,v) \subset \operatorname{int}(Q_w) \setminus (Q_{w0} \cup Q_{w1}), \quad (w,v)=(w'\alpha,v'\beta) \in \mathcal{Y}_k, 
$$
be disjoint segments with the following properties: if we denote $a_m(w,v)=a_m$ and $b_m(w,v)=b_m$, then 
\begin{eqnarray*}
\pi_1(a_1) &<& \pi_1(b_1)<x_{w0}-R_{k+1} <  \pi_1(b_1)+\epsilon_{k+1}/10, \\ 
\pi_1(a_2)-\epsilon_{k+1}/10 &<& x_{w0}+R_{k+1} <\pi_1(a_2), \\ 
\pi_1(a_2) &<& \pi_1(b_2) < x_{w1}-R_{k+1} < \pi_1(b_2)+\epsilon_{k+1}/10, \\ 
\pi_1(a_3)-\epsilon_{k+1}/10 &<& x_{w1}+R_{k+1}<\pi_1(a_3)<\pi_1(b_3). 
\end{eqnarray*}
We also require that (See Figure \ref{positio}) if we denote 
\begin{eqnarray}
\begin{split} 
\label{timis} 
r_v = \epsilon_{k}, \,  R_v=(\epsilon_{k-1}\epsilon_{k})^{1/2} \quad \text{if } \beta=0 \text{ or } 1, \\  
r_v = (\epsilon_{k-1}\epsilon_{k})^{1/2}, \, R_v=\epsilon_{k-1} \quad \text{if } \beta=2 \text{ or } 3, 
\end{split} 
\end{eqnarray}
then for each $r_v< r < R_v$ there are arcs 
\begin{eqnarray*} 
J_1(r,w,v) \subset S(b_m(w',v'),r), \quad m=\alpha+1,  \\ 
J_3(r,w,v)\subset S(a_m(w',v'),r), \quad m=\alpha+2,  
\end{eqnarray*} 
whose relative interiors are disjoint and do not intersect any segment $I_m(\tilde w,\tilde v)$, 
$(\tilde{w},\tilde{v}) \in \mathcal{Y}_\ell$, $0 \leq \ell \leq k$, so that 
\begin{itemize} 
\item[(i)] the endpoints of $J_1(r,w,v)$ lie in $I_{\alpha+1}(w',v')$ and $I_1(w,v)$. 
\item[(ii)] the endpoints of $J_3(r,w,v)$ lie in $I_{\alpha+2}(w',v')$ and $I_3(w,v)$. 
\end{itemize}

We are now ready to define $D$; it is the domain for which 
$$
\mathcal{C}(D)=K \cup \{I_m(w,v): \, m=1,2,3, \, (w,v) \in \mathcal{Y}_{k}, \, k =0,1,2, \ldots\}, 
$$
where $K$ is the Cantor set in \eqref{oona}. 

\begin{figure} 
\begin{tikzpicture}
\draw  (6,0) circle (3cm); 
\draw (6,0) circle (2cm); 
\draw (6,0) circle (1cm);
\draw[black, very thick] (1,0) -- (6,0); 
\draw[black, very thick] (6.87,.5) -- (12,0.4); 
\draw[black, very thick] (6.87,-.5) -- (12,-0.4);  
\draw[black, very thick] (7.75,1) -- (12,0.8); 
\draw[black, very thick] (7.75,-1) -- (12,-0.8);  
\node at (3.4,0.5) {$\epsilon_{k-1}$}; 
\node at (6,2.3) {$(\epsilon_{k-1}\epsilon_k)^{1/2}$}; 
\node at (5.4,0.5) {$\epsilon_{k}$}; 
\end{tikzpicture} 
\caption{Positioning of the segments $I_m(w,v)$} \label{positio}
\end{figure}
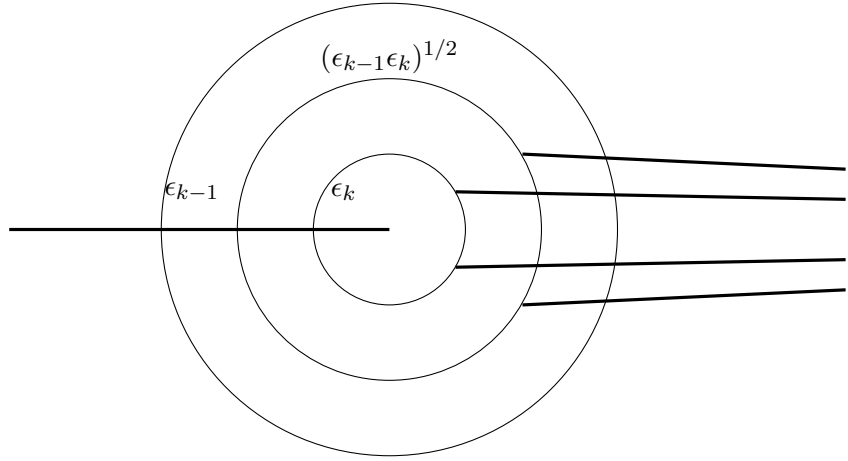

%%%%%%%%%%%%%%%%%%%%%%%%%%%%%%%%%%%%%%%%%%%%%%%%%%%%%%%%%%%%%%%%%%%%%%%%%%%%%%%%%%%%%%%%%%%%%%%%%%%%%%%%%%%%%%%%%

\subsection{Construction of the exhaustion} 
We now construct an exhaustion $\Phi_0=(D_j)$ of $D$. We fix $k \in \mathbb{N}$ and $(w,v) \in \mathcal{Y}_{k}$. First, let $U(w,v)$ be a Jordan domain 
so that if we denote $I(w,v)=I_1(w,v) \cup I_2(w,v) \cup I_3(w,v)$ then 
$$
I(w,v) \subset U(w,v) \subset \overline{U}(w,v) \subset \operatorname{int}(Q_w) \setminus (Q_{w0} \cup Q_{w1}). 
$$
We also require that if $B \in \cup_\ell \mathcal{B}_\ell$ where $\mathcal{B}_\ell$ is the family of balls in \eqref{holysmo}, 
then either $U(w,v) \cap B=\emptyset$ or $I(w,v) \cap B \neq \emptyset$ and 
\begin{equation} 
\label{kado}
U(w,v)\cap B \subset N_{k}(I(w,v)) \cap B. 
\end{equation} 
Here $N_k(I(w,v))$ is the set of those $x \in \mathbb{C}$ for which 
\begin{equation} 
\label{ziiri} 
\operatorname{dist}(x,I(w,v)) < \frac{\min\{\epsilon_{k+1},\operatorname{dist}(I(w,v),\mathbb{C}\setminus(D \cup I(w,v))\}}{100}. 
\end{equation} 
Next, for $m=1,2,3$ denote 
$$U_m(k+1,w,v)=U(w,v)$$ and let 
$$
U_m(j,w,v), \quad j=k+2,k+3,\ldots
$$ 
be Jordan domains so that  
\begin{eqnarray*} 
 \overline{U}_m(k+2,w,v) \cap \overline{U}_{m'}(k+2,w,v) = \emptyset \quad \text{if } m \neq m', 
\end{eqnarray*} 
and, with $N_j(I_m(w,v))$ defined as in \eqref{ziiri}, 
\begin{eqnarray*} 
I_m(w,v) \subset U_m(j,w,v) \subset \overline{U}_m(j,w,v) \subset U_m(j-1,w,v) \cap N_j(I_m(w,v)). 
\end{eqnarray*} 

We denote 
$$
\mathcal{A}_j=\{\overline{U}_m(j,w,v): \, m=1,2,3, \, (w,v) \in \mathcal{Y}_k, \, 0 \leq k \leq j-1\}, 
$$ 
and define $D_j$ by 
$$
\mathcal{C}(D_j) = \mathcal{Q}_j \cup \mathcal{A}_{j}, 
$$
where $\mathcal{Q}_j$ is the family of squares in \eqref{sma}. Then $\Phi_0=(D_j)$ is an exhaustion of $D$. 
Theorem \ref{mainexample2} follows by combining the two propositions below and choosing any $\Phi=(D_{j_n})$ so that $(f_{j_n})$ converges.

\begin{proposition} 
\label{kuja1}
There is $\delta >0$ such that if $p=p_{\bar{w}} \in K$, then 
\begin{equation}
\label{haras}
\modu(I_1,Q_{\bar{w}(j_0)};D_j) \geq \delta \quad \text{for all } j_0 \in \mathbb{N} \text{ and } j > j_0. 
\end{equation}
Here $I_1$ is the segment in \eqref{huima}. 
\end{proposition}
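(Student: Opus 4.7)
The plan is to mirror the proof of Proposition \ref{juba1}: construct a parametrized family $\Gamma$ of paths in $\hat{D}_j$ from $\pi_{D_j}(I_1)$ to $\pi_{D_j}(Q_{\bar{w}(j_0)})$, integrate the admissibility inequality for a test function $\rho\in X(\Gamma)$ over the parameter, and apply Fubini and H\"older to extract a uniform lower bound on $\modu(\Gamma)$.

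Each path $\gamma\in\Gamma$ will descend the Cantor tree level by level along $\bar{w}=w_1w_2\cdots$. At level $k\in\{1,\ldots,j_0\}$ it crosses one of the arcs $J_1(r_k,\bar{w}(k),v_1\cdots v_k)$ (or $J_3$, depending on $w_k$) supplied by the construction, joining the complementary component $\overline{U}_m(j,\bar{w}(k-1),v_1\cdots v_{k-1})\in\mathcal{A}_j\subset\mathcal{C}(D_j)$ at level $k-1$ to $\overline{U}_{m'}(j,\bar{w}(k),v_1\cdots v_k)$ at level $k$; the final component $\overline{U}_1(j,\bar{w}(j_0),v_1\cdots v_{j_0})$ sits inside $Q_{\bar{w}(j_0)}$, so landing there produces a path in $\Gamma(I_1,Q_{\bar{w}(j_0)};D_j)$. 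The path is indexed by $(v,\vec r)$ with $v=(v_1,\ldots,v_{j_0})\in\{0,1,2,3\}^{j_0}$ (discrete) and $\vec r=(r_1,\ldots,r_{j_0})$, where $r_k$ ranges over $[r_{v_k},R_{v_k}]$ from \eqref{timis} (continuous). The case $w_1=1$, in which $J_1(r_1,1,v_1)$ emanates from $I_2$ rather than $I_1$, is handled by prepending the single extra complementary component $\overline{U}_2(j,e,e)$ at a controlled additive cost.

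For the modulus estimate, fix $\rho\in X(\Gamma)$. For each $v$, integrate the admissibility inequality over $\vec r\in P_v=\prod_k[r_{v_k},R_{v_k}]$; Fubini converts each arc integral $\int_{J_1(r_k,\cdot)}\rho\,ds$ into an area integral $\int_{A^{(k)}_{v_1\cdots v_k}}\rho\,dA$ over the annular region swept out at level $k$, producing a Jacobian $(R_{v_k}-r_{v_k})^{-1}$. Averaging the resulting inequalities over $v$ by dividing by $4^{j_0}$ collects the contribution of each arc or component with prefix $v'\in\{0,1,2,3\}^k$ with weight $4^{-k}$, since such an arc or component is shared by $4^{j_0-k}$ of the $4^{j_0}$ paths. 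Cauchy--Schwarz on the inner sum over the $4^k$ prefixes produces a factor $4^{k/2}$, leaving a net weight $4^{-k/2}$ per level; since $\sum_{k}4^{-k}<\infty$, a second Cauchy--Schwarz combined with the disjointness of the $A^{(k)}_{v'}$ will yield
$$1\;\leq\;C\Big(\int_{D_j}\rho^2\,dA+\sum_{q\in\mathcal{C}(D_j)}\rho(q)^2\Big)^{1/2},$$
which gives $\modu(\Gamma)\geq 1/C^2$.

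The hard part is verifying the geometric preconditions: (i) the ratios $|A^{(k)}_{v'}|^{1/2}/(R_{v_k}-r_{v_k})$ are bounded by a universal constant, which I expect from the annular geometry of \eqref{timis}; (ii) each arc $J_1(r_k,\cdot)$ actually lies in $D_j$, not hitting any inflated segment $\overline{U}_m(j,\cdot)$, which is guaranteed once $\epsilon_j$ is chosen much smaller than the innermost radii $r_{v_k}\geq\sqrt{\epsilon_{k-1}\epsilon_k}$; and (iii) the annular regions $A^{(k)}_{v'}$ at differing $(k,v')$ are pairwise disjoint, or have only bounded overlap, so that $\sum_{k,v'}\int_{A^{(k)}_{v'}}\rho^2\,dA\leq C\int_{D_j}\rho^2\,dA$. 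All three conditions are controlled by the rapid decay of the sequence $(\epsilon_k)$ reserved in the construction of $D$, and once they are in place the $4$-fold branching averaging supplies exactly the geometric decay needed to make the modulus bound uniform in $j_0$ and $j$.
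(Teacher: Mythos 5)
Your outline of the modulus estimate (integrate admissibility over the radii, Fubini, Cauchy--Schwarz with weights $4^{-k}$ per level) is sound, and your geometric preconditions (i)--(iii) can indeed be arranged from the rapid decay of $(\epsilon_k)$ and \eqref{gidzang}. The genuine gap is that the path family you describe does not exist: a ``level-by-level descent along $\bar w$'' is not a connected set in $\hat D_j$. Look at which segments the arcs actually join. By construction, $J_1(r,w,v)$ has endpoints on $I_{\alpha+1}(w',v')$ and on $I_1(w,v)$, and $J_3(r,w,v)$ has endpoints on $I_{\alpha+2}(w',v')$ and on $I_3(w,v)$, where $w=w'\alpha$. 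So every arrival at a node is on $I_1$ or $I_3$, while a departure towards the child $w_k=0$ must leave from $I_1$ or $I_2$, and towards $w_k=1$ from $I_2$ or $I_3$. Since for $k\leq j-2$ the three inflated segments $\overline{U}_1(j,w,v),\overline{U}_2(j,w,v),\overline{U}_3(j,w,v)$ are pairwise disjoint complementary components of $D_j$, a chain that arrives on $I_1$ can only continue to the child $0$, and one that arrives on $I_3$ only to the child $1$. Hence your descent is consistent only for $\bar w=00\cdots$ (or $11\cdots$); for a general $\bar w$ it gets stuck at the first switch of digits. Your proposed fix for the top level --- ``prepending $\overline{U}_2(j,e,e)$'' --- does not repair this: two distinct components are different points of $\hat D_j$, and joining them requires an actual sub-path through $D_j$ (around the square $Q_{w'0}$ or $Q_{w'1}$ and past the other families of segments), whose $\rho$-length is not controlled by anything in the construction; moreover the same mismatch recurs at every level where the branch switches, not just at the root.

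The paper's proof resolves exactly this point by taking, for each $v\in\mathcal{V}_{j-1}$ and a single parameter $t$, one large continuum $\eta(v,t)=A_j(v)\cup B_j(v,t)$ containing \emph{all} components $\overline{U}_m(j,w,v_k)$, $m=1,2,3$, over \emph{all} $w\in\mathcal{W}_k$ and all levels $0\leq k\leq j-1$, together with the arcs $J_1,J_3$ at radii $r[t,k]=R_{v_k}^tr_{v_k}^{1-t}$ down to the bottom level. Connectivity then holds because at level $k=j-1$ the three segments of each node lie in the single component $\overline{U}(w,v)=U_m(j,w,v)$, and it is only through these deepest components that the siblings $I_1,I_2,I_3$ of a node get glued together; as a byproduct each $\eta(v,t)$ meets every $Q_{\bar w(j_0)}$, so one family $\Gamma_j$ serves all $p\in K$ simultaneously. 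The price is that each path uses exponentially many components, but the counting survives because the $4$-fold branching in $v$ beats the $2$-fold branching in $w$: a level-$k$ component is shared by $4^{j-1-k}$ words $v$ while there are only $3\cdot 2^k4^k$ such components, so Cauchy--Schwarz leaves a summable factor $2^{-k/2}$. So the averaging machinery you set up is the right one, but you must run it on the full-tree continua (or some equally deep connected substitute); with only a single branch truncated at level $j_0$ there is no admissible path family to which it applies.
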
 

Recall that, given a domain $D \subset \hatc$, $p \in \mathcal{C}(D)$, and an exhaustion $\Phi=(D_j)$ of $D$, we denote by $p_\ell$ the component in $\mathcal{C}(D_\ell)$ 
containing $p$. With this notation, $Q_{\bar{w}(j_0)}=p_{j_0}$ in \eqref{haras}. 
\begin{proposition} 
\label{combi} 
Suppose $D \subset \hatc$ is a domain with exhaustion $\Phi=(D_j)$. Fix $p \in \mathcal{C}(D)$ and a compact set $E \subset \hatc$ such that $E \cap p = \emptyset$. If 
\begin{equation}
\label{nurppa}  
\lim_{\ell \to \infty} \liminf_{j \to \infty} \modu(E,p_\ell;D_j) >0, 
\end{equation}
then $\hat{f}(p)$ is non-trivial for all $f \in \mathcal{F}_{\Phi}$. 
\end{proposition}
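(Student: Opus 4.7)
My approach is by contradiction: suppose some $f = \lim f_{j_k} \in \mathcal{F}_\Phi$ has $\hat{f}(p) = \{w_0\}$ a single point. I will construct, for each large $\ell$ and along a suitable sub-subsequence of $(j_k)$, an admissible function for $\Gamma(E, p_\ell; D_{j_k})$ whose energy tends to $0$ as $\ell \to \infty$, contradicting the hypothesis (the hypothesis lifts to any subsequence since $\liminf$ over a subsequence is bounded below by $\liminf$ over the full sequence).

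By conformal invariance of transboundary modulus (Lemma \ref{modinvariance}), it suffices to work in the circle domain $\tilde{D}_{j_k} = f_{j_k}(D_{j_k})$. For $0 < r < R$, I take the standard logarithmic function centered at $w_0$: set $\rho(z) = (|z-w_0|\log(R/r))^{-1}$ on $\tilde{D}_{j_k} \cap A(r,R)$, where $A(r,R) = \{r \le |z-w_0| \le R\}$, and $\rho(\tau) = \log(d_+(\tau)/d_-(\tau))/\log(R/r)$ on each complementary disk $\tau \in \mathcal{C}(\tilde{D}_{j_k})$ meeting $A(r,R)$, with $d_+(\tau) = \max_{z \in \tau}|z-w_0|$ and $d_-(\tau) = \min_{z \in \tau}|z-w_0|$. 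A direct radial estimate shows $\rho$ is admissible for paths joining $\{|z-w_0|\ge R\}$ to $\{|z-w_0|\le r\}$ in $\hat{\tilde{D}}_{j_k}$, and the disjointness of the disks $\tau$ as subsets of the annulus yields total energy $\int \rho^2\,dA + \sum_\tau \rho(\tau)^2 \le C/\log(R/r)$, tending to $0$ as $R/r \to \infty$.

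It then remains to verify two geometric facts. First, $\tilde{p}_\ell^{j_k} := \pi_{\tilde{D}_{j_k}}^{-1}(\hat{f}_{j_k}(\pi_{D_{j_k}}(p_\ell)))$ fits in $B(w_0, r_\ell)$ with $r_\ell \to 0$ as $\ell \to \infty$: the curves $\partial p_\ell$ are compact in $D$ and converge to $p$ in Hausdorff distance, so continuity of $\hat{f}$ at $[p]$ with value $w_0$, combined with $f_{j_k} \to f$ uniformly on $\partial p_\ell$, yields $f_{j_k}(\partial p_\ell) \subset B(w_0, r_\ell)$ for $k \ge K(\ell)$; since these Jordan curves separate $\tilde{p}_\ell^{j_k}$ (which contains $\hat{f}_{j_k}(p_{j_k})$, itself near $w_0$) from infinity, the Jordan curve theorem forces $\tilde{p}_\ell^{j_k} \subset \overline{B(w_0, r_\ell)}$. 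Second, the analogous set $\tilde{E}^{j_k} := \pi_{\tilde{D}_{j_k}}^{-1}(\hat{f}_{j_k}(\pi_{D_{j_k}}(E)))$ satisfies $\dist(\tilde{E}^{j_k}, w_0) \ge \eta/2$ for some $\eta > 0$ and all $k$ large: $\pi_D(E)$ is compact in the compact Hausdorff space $\hat{D}$ and disjoint from $[p]$, so the pullback $\tilde{E} := \pi_{f(D)}^{-1}(\hat{f}(\pi_D(E)))$ is compact in $\hatc$ with $\dist(\tilde{E}, w_0) = \eta > 0$; I then plan to show $\limsup_k \tilde{E}^{j_k} \subset \tilde{E}$ by combining $f_{j_k} \to f$ locally uniformly on $D$ with the Hausdorff convergence $\hat{f}_{j_k}(q_{j_k}) \to \hat{f}(q)$ for each $q \in \mathcal{C}(D)$ meeting $E$.

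Combining these facts with the energy bound and conformal invariance gives $\modu(E, p_\ell; D_{j_k}) \le C/\log(\eta/(2r_\ell)) \to 0$ as $\ell \to \infty$, delivering the contradiction. The main obstacle is the second geometric fact: the exhaustion may create infinitely many $q' \in \mathcal{C}(D_{j_k})$ meeting $E$, whose images $\hat{f}_{j_k}(q')$ are disks whose distribution in $\hatc$ must be controlled uniformly in $k$. I plan to verify the required Hausdorff convergence $\hat{f}_{j_k}(q_{j_k}) \to \hat{f}(q)$ by enclosing each fixed $q$ by a Jordan curve $\gamma \subset D_{j_0}$ for some $j_0$, noting $f_{j_k}(\gamma) \to f(\gamma)$ uniformly in $k$, and applying the Jordan curve theorem to the enclosed regions, which stay bounded away from $w_0$ because $\hat{f}$ is injective and $q \ne p$.
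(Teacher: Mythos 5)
Your overall scheme (argue by contradiction, pass to the image circle domains by conformal invariance, and build logarithmic test functions around $w_0=\hat f(p)$) is the same as the paper's, and your two ``geometric facts'' (that $f_{j_k}(\partial p_\ell)$ and hence the image of $p_\ell$ shrinks to $w_0$, and that the image of $E$ stays at definite distance from $w_0$) are in the right direction. The genuine gap is in your central energy estimate. The claim that disjointness of the complementary disks gives $\int\rho^2\,dA+\sum_\tau\rho(\tau)^2\le C/\log(R/r)$ is false: a single complementary disk $\tau$ of $f_{j_k}(D_{j_k})$ with $d_-(\tau)\le r$ and $d_+(\tau)\ge R$ (a large disk sweeping past $w_0$) already contributes $\rho(\tau)^2\approx 1$, and in fact such a disk forces $\modu$ of the annulus family to be bounded below independently of $R/r$, since a path may cross the whole annulus by traversing that one disk, so every admissible function must give it weight close to $1$. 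Disjointness only controls disks whose diameter is comparable to, or smaller than, their distance to $w_0$; it says nothing about a disk spanning many scales near $w_0$. Nothing in your argument excludes such disks, and this is precisely the point where the hypothesis that $\hat f(p)$ is a single point must be used beyond locating the image of $p_\ell$ --- your proposal never uses it for this purpose.

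The paper supplies exactly this missing ingredient as a separate statement (Lemma \ref{umme}): if there were $q_j\in\mathcal{C}(D_j)$ whose images met both $S(w_0,R)$ and circles of radii tending to $0$, one picks $x_j\in q_j$, passes to a limit $x_0\in q_0\in\mathcal{C}(D)$, uses the nesting of the exhaustion ($q_j\subset q_0(k)$ for $j$ large) to conclude that $\hat f(q_0)$ contains both $w_0$ and a point of $S(w_0,R)$, contradicting $\hat f(p)=\{w_0\}$ and the injectivity of $\hat f$ on $\hat D$. Even with this separation a single annulus does not suffice; the paper chooses telescoping radii $R_{n+1}<R_n/2$ and indices $j_n$ so that each complementary disk is charged in at most one annulus $A(2R_n,R_n)$, assigns value $1$ to the at most $30$ disks per annulus with $\diam>\dist(\cdot,w_0)$ and value $\diam/(R_n\log 2)$ to the others, and averages $\rho=N^{-1}\sum_{n=1}^N\rho_n$ to get energy $\lesssim 1/N\to 0$ (as in Lemma \ref{masterlemma2}). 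Incidentally, the same compactness argument disposes of your worry about infinitely many components meeting $E$: one does not need per-component Hausdorff convergence $\hat f_{j_k}(q_{j_k})\to\hat f(q)$, only the uniform statement that for $j$ large $\hat f_j(E)$ avoids a fixed ball around $w_0$, which follows by the identical limit/nesting argument. Without an ingredient of this kind your modulus bound does not tend to zero and the contradiction is not reached.
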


%%%%%%%%%%%%%%%%%%%%%%%%%%%%%%%%%%%%%%%%%%%%%%%%%%%%%%%%%%%%%%%%%%%%%%%%%%%%%%%%%%%%%%%%%%%%%%%%%%%%%%%%%%%%%%%%%

\subsection{Proof of Proposition \ref{kuja1} } 

Fix $p=p_{\overline{w}} \in K$, $j_0 \in \mathbb{N}$, and $j >j_0$. Let $\mathcal{V}_0=\{e\}$, and for $k=1,2,\ldots$, let 
$$
\mathcal{V}_k=\{v=v_1v_2\cdots v_k: \, v_\ell=\{0,1,2,3\} \, \text{for all } 1 \leq \ell \leq k\}, 
$$
so that $\mathcal{Y}_k=\mathcal{W}_k \times \mathcal{V}_k$. We consider the family of continua 
$$
\eta(v,t) \subset \hat{D}_j, \, v \in \mathcal{V}_{j-1}, \, 1/4<t<3/4, 
$$
defined as follows: if $v=v_1v_2\ldots v_{j-1}$, let $\eta(v,t)=A_j(v) \cup B_j(v,t)$, where 
\begin{eqnarray*}
A_j(v) &=& \cup \{\overline{U}_m(j,w,v_k) \in \mathcal{A}_j: \, m=1,2,3,\, w \in \mathcal{W}_k,\, 0 \leq k \leq j-1\}, \, \text{and }\\ 
B_j(v,t) &=& \cup \{J_m(r[t,k],w,v_k): \, m=1,3, \,  w \in \mathcal{W}_k,\, 1 \leq k \leq j-1\}. 
\end{eqnarray*}
Here $r[t,k]=R^t_{v_k}r^{1-t}_{v_k}$ and $R_{v_k},r_{v_k}$ are the radii in \eqref{timis}. 

Each $\eta(v,t)$ is a continuum joining $\overline{U}_1(j,e,e)$ and 
$\overline{U}_3(j,e,e)$ in $\hat{D}_j$. Moreover, each $\eta(v,t)$ intersects $Q_{\bar w(j_0)}$. By \eqref{kado}, we have $\eta(v,t) \setminus A_j(v) \subset D_j$. It is important to notice that the continua $\eta(v,t)$ do not intersect any of the squares in 
$\mathcal{Q}_j \subset \mathcal{C}(D_j)$. 

Let $\gamma_{v,t}$ be an arc-length parametrization of $\eta(v,t)$, and 
$$
\Gamma_j=\{\gamma_{v,t}: \, v \in \mathcal{V}_{j-1}, \, 1/4 < t < 3/4\}. 
$$
In view of the comments above, \eqref{haras} follows if we can prove a lower bound for $\modu \Gamma_j$ independent of $j$. Fix $\rho \in X(\Gamma_j)$; 
\begin{eqnarray*}
1 \leq \sum_{q \in A_j(v)}\rho(q) + \sum_{J \in B_j(v,t)} \int_J \rho \, ds. 
\end{eqnarray*}
Integrating both sides over $1/4<t<3/4$ and summing over $v \in \mathcal{V}_{j-1}$ yields 

\begin{eqnarray*}
\frac{4^{j-1}}{2}  \leq \frac{1}{2}\sum_{v \in \mathcal{V}_{j-1}} \sum_{q \in A_j(v)}\rho(q) 
+ \sum_{v \in \mathcal{V}_{j-1}}   \int_{1/4}^{3/4} \sum_{J \in B_j(v,t)} \int_{J} \rho \, ds \, dt = S_1+S_2. 
\end{eqnarray*}
 
We estimate the sums $S_1,S_2$ from above. First, changing the order of summation yields 
\begin{eqnarray*} 
2S_1 =\sum_{k=0}^{j-2} 4^{j-1-k}  \sum_{\substack{(w,v') \in \mathcal{Y}_k \\ m=1,2,3}}\rho(\overline{U}_m(j,w,v')) + \sum_{(w,v) \in \mathcal{Y}_{j-1}} \rho(\overline{U}(j,w,v)) 
= \sum_{k=0}^{j-1}S'_k. 
\end{eqnarray*} 

H\"older's inequality yields 
\begin{eqnarray*} 
S'_k \leq 4^{j-k-1} (3\cdot 2^k\cdot4^k)^{1/2} \Big( \sum_{q \in \mathcal{C}(D_j)} \rho(q)^{2}\Big)^{1/2} \leq 2^{2j-k/2-1} \Big( \sum_{q \in \mathcal{C}(D_j)} \rho(q)^{2}\Big)^{1/2}
\end{eqnarray*} 
for all $0 \leq k \leq j-1$. Thus, summing over $k$ we have 
$$
S_1 \leq 4^j \sum_{k=0}^{j-1} 2^{-k/2} \Big( \sum_{q \in \mathcal{C}(D_j)} \rho(q)^{2}\Big)^{1/2}  \leq 4^{j+1} \Big( \sum_{q \in \mathcal{C}(D_j)} \rho(q)^{2}\Big)^{1/2}. 
$$

We now estimate $S_2$. First, we denote by $\mathcal{Z}_\ell$ the set of centers $z$ in the definition of $\mathcal{B}_\ell$ in \eqref{holysmo}. Fubini's theorem and 
\eqref{gidzang} yield 
\begin{equation}
\label{ooks} 
S_2 \leq \sum_{k=1}^{j-1}4^{j-k-1} \sum_{z \in \mathcal{Z}_k} \int_{1/4}^{3/4} \int_{S(z,r[t,k])} \rho \, ds \, dt =  \sum_{k=1}^{j-1} T_k. 
\end{equation} 
We apply change of variables to the integral in \eqref{ooks} to conclude that 
\begin{equation} 
\label{sip}
T_k \leq 4^{j-k}  \Big( \log \frac{\epsilon_{k-1}}{\epsilon_k}\Big)^{-1} \sum_{z \in \mathcal{Z}_k} \int_{B(z,\epsilon_{k-1}) \setminus \overline{B}(z,\epsilon_k)} \frac{\rho(x)}{|x|}\, dA(x). 
\end{equation} 
Applying H\"older's inequality to the integral in 
\eqref{sip} yields 
$$
T_k \leq (2\pi)^{1/2} 4^{j-k}  \Big( \log \frac{\epsilon_{k-1}}{\epsilon_k}\Big)^{-1/2}  \sum_{z \in \mathcal{Z}_k}\Big(\int_{B(z,\epsilon_{k-1})} \rho(x)^2\, dA(x) \Big)^{1/2}. 
$$
Since $\operatorname{card}(Z_k) \leq 6\cdot 8^{k-1}\leq 8^k$ for all $0 \leq k \leq j-1$, we moreover have  
\begin{eqnarray*}
T_k \leq (2\pi)^{1/2} 4^j \cdot 2^{-k/2} \Big( \log \frac{\epsilon_{k-1}}{\epsilon_k}\Big)^{-1/2} \Big(\int_{D_j} \rho(x)^2 \, dA(x) \Big)^{1/2}.  
\end{eqnarray*}
Thus, if we require that $\epsilon_k \leq \epsilon_{k-1}/e$ for all $k$, we have 
\begin{eqnarray*}
S_2 \leq (2\pi)^{1/2} 4^j \sum_{k=1}^{j-1} 2^{-k/2} \Big(\int_{D_j} \rho(x)^2 \, dA(x) \Big)^{1/2} \leq  4^{j+2}  \Big(\int_{D_j} \rho(x)^2 \, dA(x) \Big)^{1/2}. 
\end{eqnarray*}
Combining the estimates yields 
\begin{eqnarray*}
4^{j-2} &\leq& 4^{j+1} \Big( \sum_{q \in \mathcal{C}(D_j)} \rho(q)^{2}\Big)^{1/2}+4^{j+2}  \Big(\int_{D_j} \rho(x)^2 \, dA(x) \Big)^{1/2}  \\ 
&\leq& 4^{j+3} \Big(\int_{D_j} \rho(x)^2 \, dA(x)+  \sum_{q \in \mathcal{C}(D_j)} \rho(q)^{2} \Big)^{1/2}. 
\end{eqnarray*} 
We conclude that $\modu(\Gamma_j) \geq 4^{-10}$. The proof is complete. 

%%%%%%%%%%%%%%%%%%%%%%%%%%%%%%%%%%%%%%%%%%%%%%%%%%%%%%%%%%%%%%%%%%%%%%%%%%%%%%%%%%%%%%%%%%%%%%%%%%%%%%%%%%%%%%%%%

\subsection{Proof of Proposition \ref{combi}} 
By taking a subsequence of $(D_j)$, we may assume $f_j \to f$. Suppose towards contradiction that $\hat{f}(p)$ is a point 
component. We lose no generality by assuming $\hat{f}(p)=\{0\}$.  

\begin{lemma} 
\label{umme} 
Suppose $\hat{f}(p)=\{0\}$. For every $R>0$ there are $r>0$ and $m \in \mathbb{N}$ so that if $j\geq m$ and if $q \in \mathcal{C}(D_j)$ satisfies $\hat{f}_j(q) \cap S(0,R) \neq \emptyset$, then $\hat{f}_j(q) \cap S(0,r) =\emptyset$. 
\end{lemma}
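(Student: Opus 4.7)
The plan is to argue by contradiction in two stages: first establish the analogous statement for the limit map $f$, and then promote it to the $f_j$ using local uniform convergence together with the fact that each $B_k := \hat{f}_{j_k}(q_k)$ is a round disk.

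For the first stage, I claim there is $r' > 0$ so that every $q \in \mathcal{C}(D) \setminus \{p\}$ with $\hat{f}(q) \cap S(0,R) \neq \emptyset$ satisfies $\hat{f}(q) \cap B(0,r') = \emptyset$. If this fails, I get components $q_n \in \mathcal{C}(D) \setminus \{p\}$ whose images $C_n := \hat{f}(q_n) \in \mathcal{C}(f(D))$ each meet both $S(0, R)$ and $B(0, 1/n)$. A repeated $q_n = q$ would force $0 \in \hat{f}(q)$ by closedness, hence $q = p$ because $\hat{f}(p) = \{0\}$ is itself a complementary component and distinct members of $\mathcal{C}(f(D))$ are disjoint. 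So I may assume the $C_n$ are pairwise disjoint continua in $\hatc \setminus f(D)$ with $\diam(C_n) \geq R - 1/n$. Passing to a Hausdorff-convergent subsequence in $\hatc$ (spherical metric) via Blaschke selection produces a continuum $K \ni 0$ with $\diam(K) \geq R$. Closedness of $\hatc \setminus f(D)$ places $K \subset \hatc \setminus f(D)$; connectedness confines $K$ to a single element of $\mathcal{C}(f(D))$, which must equal $\hat{f}(p) = \{0\}$ since $0 \in K$, contradicting $\diam(K) \geq R$.

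For the second stage, suppose the lemma fails: extract $j_k \to \infty$, $r_k \to 0$, and $q_k \in \mathcal{C}(D_{j_k})$ with $B_k := \hat{f}_{j_k}(q_k)$ meeting both $S(0, R)$ and $S(0, r_k)$. By compactness of the parameter space of closed spherical disks in $\hatc$, pass to a Hausdorff-convergent subsequence $B_k \to B^*$; the limit $B^*$ is a closed disk containing $0$ and a point on $S(0, R)$, hence of positive spherical radius. The crucial observation is that $\operatorname{int}(B^*) \cap f(D) = \emptyset$: for $y \in \operatorname{int}(B^*)$, Hausdorff convergence of round disks with radii bounded below gives $y \in \operatorname{int}(B_k)$ for large $k$; if also $y = f(z)$ with $z \in D$, then $z \in D_{j_k}$ eventually and $f_{j_k}(z) \to f(z) = y$ by local uniform convergence, placing $f_{j_k}(z) \in \operatorname{int}(B_k)$ eventually, contradicting $f_{j_k}(z) \in \tilde{D}_{j_k}$.

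Thus $\operatorname{int}(B^*)$ is a non-empty open connected subset of $\hatc \setminus f(D) = \bigcup \mathcal{C}(f(D))$, contained in a single $q^* \in \mathcal{C}(f(D))$. Closedness of $q^*$ gives $q^* \supset B^*$, so $q^*$ has positive diameter and hence $q^* \neq \{0\}$; writing $q^* = \hat{f}(q)$ with $q \in \mathcal{C}(D) \setminus \{p\}$, we see $q^*$ contains both $0$ and a point of $S(0, R)$, directly contradicting the first stage. The main obstacle is the interior-inheritance step for Hausdorff limits of round disks, which is exactly where the circle-domain structure of each $\tilde{D}_{j_k}$ is essential: arbitrary closed connected complementary components could converge to continua with empty interior, whereas round disks with radii bounded below converge to round disks with non-empty interiors.
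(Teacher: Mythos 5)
Your argument is correct in substance, but it takes a genuinely different route from the paper's. The paper's proof never uses that the targets $\tilde{D}_j$ are circle domains: it fixes points $x_j$ in the offending components $q_j \in \mathcal{C}(D_j)$, extracts a limit point lying in some $q_0 \in \mathcal{C}(D)$, observes that $q_j$ is eventually contained in the component $q_0(k) \in \mathcal{C}(D_k)$ containing $q_0$, and then uses locally uniform convergence of $f_j$ on the compact set $\partial q_0(k) \subset D$ to conclude that $\hat{f}(q_0(k))$, and hence $\hat{f}(q_0)=\bigcap_k \hat{f}(q_0(k))$, contains the origin and meets $S(0,R)$, contradicting $\hat{f}(p)=\{0\}$. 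You instead exploit the round-disk structure of $B_k=\hat{f}_{j_k}(q_k)$: Hausdorff convergence of disks with radii bounded below gives a limit disk $B^\ast$ with nonempty interior, and the interior-inheritance step yields $\operatorname{int}(B^\ast)\cap f(D)=\emptyset$, which is exactly what replaces the paper's nesting argument (as you note, for general complementary continua the limit could have empty interior and the step would fail). The trade-off: the paper's argument is insensitive to the shape of the complementary components of $\tilde{D}_j$ (it would survive verbatim for, say, slit-domain approximants), while yours is tied to circle domains but works directly with the images and avoids the combinatorics of the exhaustion.

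Two points to tighten. First, your compactness claim for closed spherical disks has a degenerate case: caps whose radii tend to $\pi$ Hausdorff-converge to all of $\hatc$, and then a fixed $y$ need not lie in $\operatorname{int}(B_k)$ eventually, so the interior-inheritance step as written would break. This is repaired in one line using the paper's normalization: $a_0,a_1,a_2\in D_1$ are fixed by every $f_j$, so the open cap $\hatc\setminus B_k \supset \tilde{D}_{j_k}$ contains three fixed distinct points, its spherical radius is bounded below, and hence the radii of the $B_k$ stay away from $\pi$; thus $B^\ast$ is a genuine closed disk. Second, your Stage 1 is logically redundant: since $\hat{f}(p)=\{0\}$ is itself an element of $\mathcal{C}(f(D))$ and $0\in q^\ast$, disjointness of complementary components already forces $q^\ast=\{0\}$, contradicting $\diam(q^\ast)\geq R$ directly. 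Neither point affects the correctness of your overall scheme.
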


\begin{proof} 
Suppose towards contradiction that there is $R>0$ and a sequence $(q_{n_j})$, $q_{n_j} \in \mathcal{C}(D_{n_j})$, so that each $\hat{f}_{n_j}(q_{n_j})$ intersects both $S(0,R)$ 
and $S(0,2^{-j})$. By passing to a subsequence if necessary, we may assume $n_j=j$. 

For each $j \in \mathbb{N}$, fix a point $x_j \in q_j$. 
Since $\hatc \setminus D$ is compact, $(x_j)$ has a subsequence converging to $x_0 \in q_0$ for some $q_0 \in \mathcal{C}(D)$. We may assume that $x_j \to x_0$. 
It follows that if $k \in \mathbb{N}$ and if $q_0(k)$ is the element of $\mathcal{C}(D_k)$ containing $q_0$, then 
$$
q_j \subset q_0(k) \quad \text{for all } j \geq j_k. 
$$
In particular, since $\hat{f}_j(q_j)$ intersects both $S(0,R)$ and $S(0,2^{-j})$, so does $\hat{f}_j(q_0(k))$. We conclude that $\hat{f}(q_0(k))$ 
contains both the origin and a point in $S(0,R)$. But this holds for all $k$, so also $\hat{f}(q_0)$ contains both the origin and a point in $S(0,R)$. This contradicts our 
assumption, that $\hat{f}(p)=\{0\}$. The proof is complete. 
\end{proof}

We use Lemma \ref{umme} to construct a decreasing sequence $(R_n)$ of positive real numbers and an increasing sequence $j_n$ of indices as follows (compare 
to the proof of Lemma \ref{masterlemma2}): First, 
choose $R_1,j_1$ so that $\hat{f}_{j}(E) \cap B(0,2R_1)= \emptyset$ for all $j \geq j_1$. Here $E$ is the compact set in the statement of the proposition. 

Then, assuming that $R_n,j_n$ have been constructed, choose 
$R_{n+1} <R_n/2$ and $j_{n+1} \geq j_n$ such that if $q \in \mathcal{C}(D_j)$, $j \geq j_{n+1}$, and $\hat{f}_j(q) \cap S(0,R_n) \neq \emptyset$, then 
 $\hat{f}_j(q) \cap S(0,2R_{n+1}) =\emptyset$. 
 
Given $k \in \mathbb{N}$, let $N$ be the largest number for which there is $j'_N\geq k$ so that 
$\hat{f}_j(p_k) \subset B(0,R_N)$ for all $j \geq j'_N$. We may assume that $j'_N=j_N$. Then 
\begin{equation} 
\label{jenk}
\modu(\hat{f}_j(E),\hat{f}_j(p_k);f_j(D_j)) \leq \modu(S(0,2R_1),S(0,R_N);f_j(D_j)) 
\end{equation} 
for all $j \geq j_N$ (here the modulus on the left is over all paths connecting $\hat{f}_j(E)$ and $\hat{f}_j(p_k)$ in $\widehat{f_j(D_j)}$, a slight abuse of earlier terminology). 
Fix such a $j$. We construct a test function $\rho$ as follows: First, let $1 \leq n \leq N$. We denote 
$A(R,r)=B(0,R) \setminus \overline{B}(0,r)$ and define 
\begin{eqnarray*} 
\rho_n(x)=
\left\{ \begin{array}{ll} 
\frac{1}{|x| \log 2}, &  x \in D_j \cap A(2R_n,R_n) \\
\frac{\diam(x)}{R_n \log 2}, & x \in \mathcal{C}(f_j(D_j)), \, x \cap A(2R_n,R_n) \neq \emptyset, \, \diam(x) \leq \operatorname{dist}(x,0), \\ 
1, & x \in \mathcal{C}(f_j(D_j)), \, x \cap A(2R_n,R_n) \neq \emptyset, \, \diam(x) > \operatorname{dist}(x,0),
\end{array}\right. 
\end{eqnarray*}
and $\rho_n(x)=0$ otherwise. As in the proof of Lemma \ref{masterlemma2}, we have 
$$
\rho=\frac{1}{N}\sum_{n=1}^N \rho_n  \in  X(S(0,2R_1),S(0,R_N);f_j(D_j)) \quad \text{for all } j \geq j_N. 
$$ 
For each $q \in \mathcal{C}(D_j)$ there is at most one $n$ such that $\rho_n(q) \neq 0$. Moreover, for every $n$ there are at most $30$ 
elements (disks) $q \in \mathcal{C}(f_j(D_j))$ such that $q \cap A(2R_n,R_n) \neq \emptyset$ and  $\diam(q) > \operatorname{dist}(q,0)$. Thus we can estimate 
\begin{eqnarray*} 
& &\int_{f_j(D_j)} \rho_n^2 \, dA + \sum_{q \in \mathcal{C}(f_j(D_j))} \rho_n(q)^2 \leq \frac{1}{(\log 2)^2}\int_{A(2R_n,R_n)} \frac{dA}{|x|^2} \\ 
&+& \frac{\operatorname{Area}(B(0,4R_n))}{R_n^2 (\log 2)^2} +30 \leq \frac{2\pi}{\log 2}+ \frac{16\pi}{(\log 2)^2}+30 \leq 1000, 
\end{eqnarray*}
and, since we have chosen $j_k$ so that every $q \in \mathcal{C}(f_j(D_j))$ satisfies $\rho_n(q) \neq 0$ for at most one $n$, 
\begin{equation}
\label{wales}  
\int_{f_j(D_j)} \rho^2 \, dA + \sum_{q \in \mathcal{C}(f_j(D_j))} \rho(q)^2 \leq \frac{1000 N}{N^2} = \frac{1000}{N}. 
\end{equation}

Since $N \to \infty$ as $k \to \infty$, combining \eqref{wales} with \eqref{jenk} yields 
\begin{equation} 
\label{joosu}
\lim_{k \to \infty} \liminf_{j \to \infty} \modu(\hat{f}_j(E),\hat{f}_j(p_k);f_j(D_j)) =0. 
\end{equation} 
But \eqref{joosu} and the conformal invariance of the modulus contradict our assumption \eqref{nurppa}. The proof is complete. 

\vskip 10pt
\noindent
{\bf Acknowledgement.}
We thank Toni Ikonen and Dimitrios Ntalampekos for their comments and corrections.

\bibliographystyle{abbrv}
	\bibliography{ExhaustionBiblio} 

\begin{thebibliography}{10}

\bibitem{Bon11}
M.~Bonk.
\newblock Uniformization of {S}ierpi\'{n}ski carpets in the plane.
\newblock {\em Invent. Math.}, 186(3):559--665, 2011.

\bibitem{Bon16}
M.~Bonk.
\newblock Uniformization by square domains.
\newblock {\em J. Anal.}, 24(1):103--110, 2016.

\bibitem{BonMer13}
M.~Bonk and S.~Merenkov.
\newblock Quasisymmetric rigidity of square {S}ierpi\'{n}ski carpets.
\newblock {\em Ann. of Math. (2)}, 177(2):591--643, 2013.

\bibitem{BonMer20}
M.~Bonk and S.~Merenkov.
\newblock Square {S}ierpi\'{n}ski carpets and {L}att\`es maps.
\newblock {\em Math. Z.}, 296(1-2):695--718, 2020.

\bibitem{Bra80}
M.~Brandt.
\newblock Ein {A}bbildungssatz f\"{u}r endlich-vielfach zusammenh\"{a}ngende
  {G}ebiete.
\newblock {\em Bull. Soc. Sci. Lett. \L \'{o}d\'{z}}, 30(3):12, 1980.

\bibitem{ChaKei73}
C.~C. Chang and H.~J. Keisler.
\newblock {\em Model theory}.
\newblock North-Holland Publishing Co., Amsterdam-London; American Elsevier
  Publishing Co., Inc., New York, 1973.
\newblock Studies in Logic and the Foundations of Mathematics, Vol. 73.

\bibitem{Cou50}
R.~Courant.
\newblock {\em Dirichlet's {P}rinciple, {C}onformal {M}apping, and {M}inimal
  {S}urfaces}.
\newblock Interscience Publishers, Inc., New York, N.Y., 1950.
\newblock Appendix by M. Schiffer.

\bibitem{HakLi19}
H.~Hakobyan and W.~Li.
\newblock Quasisymmetric embeddings of slit {S}ierpi\'nski carpets.
\newblock {\em preprint}, 2019.

\bibitem{Har82}
A.~N. Harrington.
\newblock Conformal mappings onto domains with arbitrarily specified boundary
  shapes.
\newblock {\em J. Analyse Math.}, 41:39--53, 1982.

\bibitem{HeSch93}
Z.-X. He and O.~Schramm.
\newblock Fixed points, {K}oebe uniformization and circle packings.
\newblock {\em Ann. of Math. (2)}, 137(2):369--406, 1993.

\bibitem{HeSch94}
Z.-X. He and O.~Schramm.
\newblock Rigidity of circle domains whose boundary has {$\sigma$}-finite
  linear measure.
\newblock {\em Invent. Math.}, 115(2):297--310, 1994.

\bibitem{HeSch95}
Z.-X. He and O.~Schramm.
\newblock Koebe uniformization for ``almost circle domains''.
\newblock {\em Amer. J. Math.}, 117(3):653--667, 1995.

\bibitem{HerKos90}
D.~A. Herron and P.~Koskela.
\newblock Quasiextremal distance domains and conformal mappings onto circle
  domains.
\newblock {\em Complex Variables Theory Appl.}, 15(3):167--179, 1990.

\bibitem{HilMos09}
S.~Hildebrandt and H.~von~der Mosel.
\newblock Conformal mapping of multiply connected {R}iemann domains by a
  variational approach.
\newblock {\em Adv. Calc. Var.}, 2(2):137--183, 2009.

\bibitem{Koe08}
P.~Koebe.
\newblock \"uber die uniformisierung beliebiger analytischer kurven iii.
\newblock {\em Nachr, Ges. Wiss. Gott.}, pages 337--358, 1908.

\bibitem{LuoWu19}
F.~Luo and T.~Wu.
\newblock Koebe conjecture and the {W}eyl problem for convex surfaces in
  hyperbolic $3$-space.
\newblock {\em preprint}, 2019.

\bibitem{Nta20}
D.~Ntalampekos.
\newblock {\em Potential theory on {S}ierpi\'{n}ski carpets}, volume 2268 of
  {\em Lecture Notes in Mathematics}.
\newblock Springer, Cham, 2020.

\bibitem{NtaYou20}
D.~Ntalampekos and M.~Younsi.
\newblock Rigidity theorems for circle domains.
\newblock {\em Invent. Math.}, 220(1):129--183, 2020.

\bibitem{Sch95}
O.~Schramm.
\newblock Transboundary extremal length.
\newblock {\em J. Anal. Math.}, 66:307--329, 1995.

\bibitem{Sch96}
O.~Schramm.
\newblock Conformal uniformization and packings.
\newblock {\em Israel J. Math.}, 93:399--428, 1996.

\bibitem{SolVid20}
A.~Y. Solynin and N.~C. Vidanage.
\newblock Uniformization by rectangular domains: a path from slits to squares.
\newblock {\em J. Math. Anal. Appl.}, 486(2):123927, 12, 2020.

\bibitem{You16}
M.~Younsi.
\newblock Removability, rigidity of circle domains and {K}oebe's conjecture.
\newblock {\em Adv. Math.}, 303:1300--1318, 2016.

\end{thebibliography}

\end{document}